\def\thm@space@setup{%
  \thm@preskip=\parskip \thm@postskip=0pt
}
\numberwithin{equation}{section}
\theoremstyle{plain}
\newtheorem{theorem}{Theorem}[section]
\newtheorem{lemma}[theorem]{Lemma}
\newtheorem{proposition}[theorem]{Proposition}
\theoremstyle{definition}
\newtheorem{definition}{Definition}[section]
\newtheorem{problem}[definition]{Problem}
\newtheorem{example}[definition]{Example}
\newtheorem{remark}[definition]{Remark}
\renewenvironment{proof}{{\bf \noindent Proof.}}{\qed}
\newcommand{\equivalent}{\Longleftrightarrow}
\newcommand{\TT}{\mathbb{T}}
\newcommand{\ZZ}{\mathbb{Z}}
\newcommand{\RR}{\mathbb{R}}
\newcommand{\CC}{\mathbb{C}}
\newcommand{\VV}{\mathbb{V}}
\DeclareMathOperator*\supp{\mathrm{supp}}
\DeclareMathOperator*\wstspan{\overline{\mathrm{span}^{\wast}}}
\def\1{\mathbf{1}}
\def\Id{\mathrm{Id}}
\def\H{{H}}
\def\M{\mathcal{M}}
\def\A{\mathcal{A}}
\def\N{\mathcal{N}}
\def\Prj{\mathcal{P}}
\def\B{\mathcal{B}}
\def\K{\mathcal{K}}
\def\Ball{\mathrm{Ball}}
\def\CB{\mathcal{C B}}
\def\Schur{\mathfrak{M}}
\def\L{\mathcal{L}}
\def\wast{{{\mathrm{w}}^\ast}}
\def\op{\mathrm{op}}
\def\cb{\mathrm{cb}}
\def\Aut{\mathrm{Aut}}
\def\row{\mathrm{r}}
\def\V{\mathcal{V}}
\def\R{\mathcal{R}}
\def\algtensor{\otimes_{\mathrm{alg}}}
\def\mintensor{\otimes_{\min}}
\def\weaktensor{ \, \overline{\otimes} \, }
\def\Fubtensor{\otimes_{\mathcal{F}}}
\def\osprojtensor{\, \widehat{\otimes} \,}
\def\Haag{\otimes_{h}}
\def\eHaag{\otimes_{e h}}
\def\nHaag{\otimes_{\sigma h}}
\begin{document}

\title[On Quantum Relations]{
  A Few Observations on Weaver's Quantum Relations
}

\author[Gonz\'alez-P\'erez]{
  Adri\'an M. Gonz\'alez-P\'erez\\
  \MakeLowercase{
    \texttt{adrian.gonzalez@icmat.es}
  }
}
\thanks{
  The author has been partially supported by the FPI scholarship
  \texttt{BES-2011-044193} and by the Severo Ochoa Excellence Programme
  \texttt{SEV-2011-0087}
}

%\date{\today}

\maketitle

\begin{abstract}
  In \cite{Wea2012}, Weaver introduced the concept of quantum relation $\R$
  over a von Neumann algebra $\M$. When $\M$ is either finite dimensional
  or discrete and abelian, $\R$ is given by an orthogonal projection in
  $\M \weaktensor \M_\op$. Here, we generalize such result to general
  von Neumann algebras, proving that quantum relations are in bijective
  correspondence with weak-$\ast$ closed left ideals inside
  $\M \eHaag \M$, where $\eHaag$ is the extended Haagerup tensor product.
  The correspondence between the two is given by identifying
  $\M \eHaag \M$ with $\M'$-bimodular operators and proving a double
  annihilator relation
  
  Given an action of a group/quantum group on $\M$ we give a definition for
  invariant quantum relations and prove that in the case of group von Neumann
  algebras $\L G$, invariant quantum relations are left ideals in the measure
  algebra $M G$. At the end we explore possible applications to noncommutative
  harmonic analysis, in particular noncommutative Gaussian bounds.
\end{abstract}

%{\tableofcontents}

\section{{\bf Prerequisites \label{S1}}}

\subsection{Weaver's Quantum Relations} 

In {\cite{Wea2012,KuWea2012}} Kuperberg and Weaver introduced the
concept of a quantum relation over a von Neumann algebra $\M \subset \B(\H)$.
They defined a quantum relation to be a weak-$\ast$ closed operator bimodule
over $\M'$, i.e.: a linear weak-$\ast$ closed subset $\V \subset \B(\H)$
satisfying that $\M' \, \V \, \M' \subset \V$. It is easy to see that such notion
doesn't depend on the representation $\M \subset \B(\H)$.

In the case $\M = \ell_\infty(X) \subset \B(\ell_2 X)$ acting by multiplication
operators we have that $\M' = \M$. Identifying $\B(\ell_2 X)$ with matrices
indexed by $X \times X$, gives that $\V \subset \B(\ell_2 X)$ is a quantum
relation whenever
\begin{equation}
  \label{S1.eq.lrmult}
  [a_{x \, y}]_{{x , y} \in X} \in \V \implies [b_{x} \, a_{x \, y} c_{y}]_{{x , y} \in X} \in \V,
\end{equation}
for every $(b_x)_{x \in X}$, $(c_x)_{x \in X}$. This in turns easily implies,
see \cite[Proposition 1.3]{Wea2012}, that there is a unique subset
$R \subset X \times X$ such that
\[
  \V_R = \{ [a_{x \, y}]_{x , y} : (x,y) \not\in R \implies a_{x \, y} = 0 \}.
\]
and reciprocally every such subset $R \subset X \times X$ have associated the operator bimodule
of all matrices supported on $R$. When $\M = L_\infty(X) \subset \B(L_2 X)$
is abelian but not atomic we do not have a bijective correspondence between
$\M$ bimodules and measurable subsets of $X \times X$. In that case the
natural object to substitute the (discrete) relations $R \subset X \times X$
will be the, so called, measurable relations, i.e. weak-$\ast$ open subsets
$\R \subset \Prj(\M) \times \Prj(\M)$ satisfying that
\[
  \big( \bigvee_{\alpha} P_\alpha, \bigvee_{\beta} Q_\beta \big) \in \R
  \equivalent
  \exists \alpha_0, \beta_0 \, (P_{\alpha_0}, Q_{\beta_0}) \in \R.
\]
The measurable relation associated with a quantum relation
$\V \subset \B(L_2(X))$ is given by
\begin{equation}
  \label{S1.eq.RfromV}
  \R_\V = \{ (P,Q) \in \Prj(\M) \times \Prj(\M) : P \, \V \, Q \neq \{0\} \}.
\end{equation}
Notice that in the abelian discrete case we have that $\R$ is just the
set of projections $(\chi_A,\chi_B)$ such that there are $x \in A$ and
$y \in B$ with $(x,y) \in R$. Reciprocally, given any measurable relation
$\R$ we can associate a quantum relation over $\M$ given by
\begin{equation}
  \label{S1.eq.VfromR}
  \V_\R = \{ T \in \B(L_2 X) : P \, T \, Q = 0, \, \forall \, (P,Q) \not\in \R \}.
\end{equation}
It is proved in \cite{Wea2012} that the map $\R \mapsto \V_\R$ is injective.
Unfortunately it is not surjective in general. This has to do with the fact
that all the operator bimodules $\V$ arising like in \ref{S1.eq.VfromR} are
not just weak-$\ast$ closed but \emph{operator reflexive}, see
{\cite{Er1986, Lars1982}} and in particular closed in the weak operator
topology, or WOT in short. The way to fix that is to observe that if
$\V \subset \B(\H)$ is any weak-$\ast$ closed linear subspace
$\1 \otimes \V \subset \B(\ell_2 \otimes_2 H)$ is operator
reflexive. Since $\1 \otimes \V$ is a $\CC \1 \otimes \M'$-bimodule and
$(\CC \1 \otimes \M')' = \B(\ell_2) \weaktensor \M$ we have that
$\1 \otimes \V$ is a quantum relation over the amplified algebra
$\B(\ell_2) \weaktensor \M$. This suggests that the right definition
for quantum relations as pairs of related projections is given by amplified
projections in $\B(\ell_2) \weaktensor \M$. The next definition captures this
intuition.

\begin{definition}{{\bf (\emph{\cite[Definition 2.24]{Wea2012}})}}
  \label{S1.def.IQR}
  We will say that $\R \subset \Prj(\M \weaktensor \B(\ell_2)) \times
  \Prj(\M \weaktensor \B(\ell_2))$ is an \emph{intrinsic quantum relation}
  iff
  \begin{enumerate}[label={(\roman*)}, ref=(\roman*)]
    \item \label{S1.def.IQR.1} $\R$ is weak-$\ast$ open.
    \item \label{S1.def.IQR.2} $(0,0) \not\in \R$.
    \item \label{S1.def.IQR.3} If
    $(P_\alpha)_{\alpha \in A}$ and $(Q_\beta)_{\beta \in B}$ are sets
    of families of projections in $\Prj(\M \weaktensor \B(\ell_2))$ then
    \[
      \Big( \bigvee_{\alpha \in A} P_\alpha,
            \bigvee_{\beta \in B} Q_\beta
      \Big) \in \R
      \equivalent
      \exists \alpha_0 \in A, \beta_0 \in B \mbox{ such that }
      (P_{\alpha_0}, Q_{\beta_0}) \in \R.
    \]
    \item \label{S1.def.IQR.4} For every $B \in \1 \otimes \B(\ell_2)$
    we have that
    \[
      ([B P], Q) \in \R \Longleftrightarrow (P,[B^{\ast} Q]) \in \R,
    \]
    where $[A]$ represents the left (or final) projection of the
    operator $A$.
  \end{enumerate}
\end{definition}

Quantum relations over $\M \subset \B(\H)$ and intrinsic quantum relations
(or i.q.r.) over $\M$ are in bijective correspondence and the adaptations
of the maps \ref{S1.eq.RfromV} and \ref{S1.eq.VfromR} are inverse of each
other. Indeed, this correspondence works for every von Neumann algebra
$\M \subset \B(\H)$ not necessarily abelian or discrete, see
\cite[Theorem 2.32]{Wea2012}.

Through this article we are going to employ liberally the language of operator
spaces, see {\cite{Pi2003, EffRu2000Book, BleMer2004Operator}} for more
information. An operator space is a closed linear subset $E \subset \B(\H)$.
Given two operator spaces $E \subset \B(\H_1)$ and $F \subset \B(\H_2)$ we say
that a linear map $\phi : E \to F$ is \emph{completely bounded}, or \emph{c.b.}
in short, iff the matrix amplifications
$\Id \otimes \phi : M_n[E] \subset \B(\ell_2^n \otimes_2 \H_1)
\to M_n[F] \subset \B(\ell_2^n \otimes_2 \H_2)$ are uniformly bounded on $n$.
We are going to denote by $\CB(E,F)$ the space of all completely bounded
(or c.b.) operators with the norm given by
\[
  \| \phi \|_{\cb} = \sup_{n \geq 1} \big\{ \| \Id \otimes \phi : M_n[E] \to M_n[F] \| \big\}.
\]
The category of operator spaces is the collection of all operator spaces with
c.b. maps as morphisms. There is also an intrinsic characterization of operator
spaces as Banach spaces endowed with collections of matrix norms satisfying
the Ruan's axioms. Either an isometric injection $j : E \to \B(\H)$ or a
family of compatible matrix norm will be called an \emph{operator space
structure}, or \emph{o.s.s.} in short.

Let $X$ be a discrete measure space with the counting measure and let us
identify $\B(\ell_2 X)$ with matrices indexed by $X$. Given a matrix
$m = [m_{x \, y}]_{x \, y \in X}$ we define the \emph{Schur multiplier} of
symbol $m$ as the operator $S_m$ given by
\[
  S_m ([a_{x \, y}]) = [ m_{x \, y} \, a_{x \, y} ].
\]
Whenever $S_m$ is completely bounded we will say that $S_m$ is a c.b.
Schur multiplier. We are going to denote by $\Schur(X)
\subset \CB(\B(\ell_2 X))$ the set of all c.b. Schur multipliers and by
$\Schur^\sigma(X)$ the space of all c.b. and normal ones (i.e. weak-$\ast$
continuous for $S_1(\ell_2 X)^\ast = \B(\ell_2 X)$). Assume that $X$ is a
finite set, let $R \subset X \times X$ be a relation and
$\V \subset \B(\ell_2 X)$ be its associated quantum relation. We have that
the ideal $J \subset \Schur^\sigma(X) = \Schur(X)$ given by
\begin{equation}
  J = \{ S \in \Schur(X) : S{|}_\V = 0\}
\end{equation}
contains just the Schur multipliers $S_m$ whose symbol $m$ satisfies that
$m_{x \, y} = 0$ if $(x,y) \in R$. The reciprocal is also true and we have
the following. 

\begin{proposition}
  \label{S1.thm.AbeVfromJ}
  Let $X$ be a finite set and $\ell_\infty(X) \subset \B(\ell_2 X)$ and
  $\V \subset \B(\ell_2 X)$ be as above. Then if $J$ an ideal in
  $\Schur^\sigma(X)$ we have that
  \begin{eqnarray*}
    \V_J & = & \{ T \in \B(\ell_2^n) : S(T) = 0, \forall S \in J \}\\
    J_\V & = & \{ S \in \Schur^\sigma(X) : S {|}_\V = 0 \}
  \end{eqnarray*}
  are bijections between the sets of quantum relations and the set of
  ideals of Schur multipliers. Furthermore, the maps $\V \mapsto J_\V$
  and $J \mapsto \V_J$ are inverse of each other.
\end{proposition}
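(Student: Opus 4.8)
The plan is to reduce the statement to a purely combinatorial fact about the commutative algebra of Schur symbols. First I would record the algebraic structure of $\Schur^\sigma(X)$ when $X$ is finite. Since $X$ is finite, every symbol $m = [m_{x\,y}] \in \CC^{X \times X}$ defines a completely bounded normal Schur multiplier $S_m$, and the product of multipliers corresponds to the entrywise (Hadamard) product of symbols, $S_m S_{m'} = S_{m m'}$. Hence $\Schur^\sigma(X) = \Schur(X) \cong \CC^{X \times X}$ as a unital \emph{commutative} algebra, the unit being the all-ones symbol. In particular left, right and two-sided ideals coincide, and $J_\V = \{ S \in \Schur^\sigma(X) : S|_{\V} = 0 \}$ is automatically an ideal: for $S \in J_\V$ and any $S'$ one has $(S' S)(T) = S'(S(T)) = 0$ for every $T \in \V$ by commutativity.

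Second, I would classify the ideals of $\CC^{X \times X}$. As a finite product of copies of $\CC$ (one factor for each pair $(x,y)$), its ideals are exactly the coordinate subspaces: to a subset $T \subset X \times X$ associate $J_T = \{ S_m : m_{x\,y} = 0 \text{ whenever } (x,y) \notin T \}$, and every ideal arises this way. Indeed, if $S_m \in J$ with $m_{x_0 y_0} \neq 0$, multiplying by the elementary symbol supported at $(x_0, y_0)$ shows that this elementary multiplier already lies in $J$; running over all pairs in the support of $J$ gives $J = J_{T}$ with $T = \bigcup_{S_m \in J} \supp(m)$.

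Third, I would compute both maps explicitly against the combinatorial description $\V = \V_R$ of quantum relations furnished by Weaver's proposition cited above (matrices supported on $R \subset X \times X$). On one hand, $S_m|_{\V_R} = 0$ means $m_{x\,y} a_{x\,y} = 0$ for every matrix $[a_{x\,y}]$ supported on $R$; since the entries on $R$ are unconstrained this holds iff $m$ vanishes on $R$, so $J_{\V_R} = J_{R^c}$, the ideal attached to the complement $R^c = (X\times X) \setminus R$. On the other hand, for an ideal $J = J_T$ one has $\V_{J_T} = \{ [a_{x\,y}] : m_{x\,y} a_{x\,y} = 0 \text{ for all } m \text{ supported on } T \} = \{ [a_{x\,y}] : \supp(a) \subset T^c \} = \V_{T^c}$, again a quantum relation. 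Composing, $\V_R \mapsto J_{R^c} \mapsto \V_{(R^c)^c} = \V_R$ and $J_T \mapsto \V_{T^c} \mapsto J_{(T^c)^c} = J_T$, so the two maps are mutually inverse bijections.

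The argument carries essentially no analytic content: finiteness of $X$ is what makes $\Schur^\sigma(X) \cong \CC^{X \times X}$ with all symbols admissible, and what reduces the classification of ideals to elementary commutative algebra. The only point demanding care — and the nearest thing to an obstacle — is keeping track of the order-reversing complementation $R \leftrightarrow R^c$ that mediates the bijection, together with verifying the two closure conditions (that $J_\V$ is an ideal and that $\V_J$ is again a bimodule), both of which follow immediately from commutativity of the Hadamard product.
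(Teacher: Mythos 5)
Your proof is correct and follows exactly the route the paper intends: the paper states this proposition without proof, having just observed that $J_{\V_R}$ consists precisely of the multipliers $S_m$ with $m$ vanishing on $R$, and your argument supplies the remaining details (the identification $\Schur^\sigma(X) \cong \CC^{X \times X}$ under the Hadamard product, the classification of its ideals as coordinate subspaces $J_T$, and the order-reversing complementation $R \leftrightarrow R^c$ that makes the two maps mutually inverse). Nothing is missing, and the elementary, purely combinatorial character of your argument is exactly what the finiteness of $X$ is meant to buy here, in contrast with the Haagerup-tensor machinery the paper develops for the general case in Theorem \ref{S2.thm.Corr}.
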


Such result was generalized to general, not necessarily abelian, finite
dimensional von Neumann algebras $\M \subset \B(\H)$ by Weaver \cite{Wea2012}.
For that end recall that $\Schur^\sigma(X)$ is actually equal to the
algebra of all completely bounded normal operators
$S: \B(\ell_2 X) \to \B(\ell_2 X)$ that are $\ell_\infty (X)$-bimodular. We
are going to denote the the algebras of $\M'$-bimodular c.b. normal operators
on $\B(\H)$ by $\CB^\sigma_{\M' \, \M'}(\B(\H))$. It is trivial to see that in
the case of finite dimensional $\M$ we have a bounded, quasi-isometric and
multiplicative map $\Phi : \M \mintensor \M_\op \to
\CB^\sigma_{\M' \, \M'}(\B(\H))$ given by extension of
\begin{equation}
  \label{S1.eq.Phi}
  \Phi(x \otimes y) = (T \mapsto x T y).
\end{equation}
To see that, let $n = \dim \M$, so that, $\B(H)$ is
quasi-isometric to $\ell_2^n \otimes_2 \ell_2^n$ and $\CB^\sigma(\B(\H))$
is quasi isometric to $\B(\ell_2 \otimes_2 \ell_2)$. If $x,y \in \M'$,
we denote by $T_{x \, y}$ the operator given by $S \mapsto x S y$. It
is clear that $\phi \in \B(\ell_2 \otimes_2 \ell_2)$ is $\M'$-bimodular
iff it belongs to the commutant of $\{T_{x \, y}\}_{x, y \in \M'}$ but
such algebra is isomorphic to $\M \otimes \M_\op$ as we claimed. 
If $\V \subset \B(\H)$ is a quantum relation over $\M$ we have that
$J_\V = \{ s \in \M \otimes \M_\op : \Phi_s {|}_\V = 0 \}$ is a left
ideal and therefore is of the form $J_\V = (\M \otimes \M_\op) \, p_\V$
for some $p_\V \in \Prj(\M \otimes \M_\op)$. Furthermore, we have the
following.

\begin{proposition}{{\bf (\cite[Proposition 2.23]{Wea2012})}}
  If $\M$ is finite dimensional the correspondence
  $\V \mapsto p_\V^{\perp}$ defined as above is an order-preserving
  bijection between quantum relations over $\M$ and projections in
  $\M \otimes \M_\op$.
\end{proposition}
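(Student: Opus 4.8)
The plan is to translate the statement into the language of invariant subspaces of a finite-dimensional von Neumann algebra and then invoke the standard correspondence between invariant subspaces and projections in the commutant. Write $W = \B(\H)$, equipped with the Hilbert--Schmidt inner product $\langle S, T \rangle = \Tr(S^\ast T)$; since $\M$ is finite dimensional $W$ is a finite-dimensional Hilbert space and every weak-$\ast$ closed subspace is simply a linear subspace. Let $\mathcal{D} = \spn \{ T_{x \, y} : x, y \in \M' \} \subseteq \B(W)$ be the algebra of $\M'$-multiplications $T_{x \, y} : S \mapsto x S y$. A quick computation $T_{x \, y} T_{x' \, y'} = T_{x x' \, y' y}$ shows that $\mathcal{D}$ is a unital subalgebra, and $\langle T_{x \, y} S, T \rangle = \langle S, T_{x^\ast \, y^\ast} T \rangle$ shows $T_{x \, y}^\ast = T_{x^\ast \, y^\ast}$, so that $\mathcal{D}$ is a self-adjoint, hence (finite-dimensional) von Neumann, subalgebra of $\B(W)$. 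By definition a quantum relation over $\M$ is exactly a subspace $\V \subseteq W$ with $\M' \V \M' \subseteq \V$, which is precisely the requirement that $\V$ be $\mathcal{D}$-invariant; and the commutant computation recalled just above the statement identifies $\mathcal{D}' = \CB^\sigma_{\M' \, \M'}(\B(\H)) = \Phi(\M \otimes \M_\op)$.

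With this dictionary the first task reduces to a standard fact. Because $\mathcal{D}$ is self-adjoint, a subspace $\V$ is $\mathcal{D}$-invariant if and only if $\V^\perp$ is $\mathcal{D}$-invariant, if and only if the orthogonal projection $P_\V$ onto $\V$ lies in the commutant $\mathcal{D}' = \M \otimes \M_\op$. Hence $\V \mapsto P_\V$ is a bijection from quantum relations over $\M$ onto $\Prj(\M \otimes \M_\op)$, with inverse $p \mapsto \Phi_p(\B(\H))$ (the range of $p$ acting on $W$), and it is order preserving since $\V_1 \subseteq \V_2$ if and only if $P_{\V_1} \leq P_{\V_2}$.

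It then remains to check that this $P_\V$ is exactly the projection $p_\V^\perp$ attached to $\V$ through the annihilator ideal. Reading $s \in \M \otimes \M_\op$ as the operator $\Phi_s \in \B(W)$, the condition $\Phi_s|_{\V} = 0$ says $s P_\V = 0$, equivalently $s = s(1 - P_\V)$; since $P_\V \in \M \otimes \M_\op$ so is $1 - P_\V = P_{\V^\perp}$, whence $J_\V = (\M \otimes \M_\op) P_{\V^\perp}$ and the generating projection is $p_\V = P_{\V^\perp}$, i.e. $p_\V^\perp = P_\V$. Combining the three steps yields the claimed order-preserving bijection $\V \mapsto p_\V^\perp$.

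The content is essentially all carried by the commutant identity $\mathcal{D}' = \Phi(\M \otimes \M_\op)$, which is already available, together with the self-adjointness of $\mathcal{D}$ and the elementary projection-commutes-with-the-commutant lemma. The only genuinely new bookkeeping, and the place where I expect to have to be slightly careful, is the last step: matching the algebraically defined annihilator projection $p_\V$ with the orthogonal projection $P_{\V^\perp}$, and verifying that one-sided $\mathcal{D}$-invariance is literally the two-sided bimodule condition $\M' \V \M' \subseteq \V$.
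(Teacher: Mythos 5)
Your proof is correct, and it is essentially the completion of the approach the paper itself sets up: the paper does not actually prove this proposition (it quotes it from Weaver), but the discussion immediately preceding it introduces exactly your ingredients --- the operators $T_{x\,y}$, the identification of $\M'$-bimodular maps with the commutant $\{T_{x\,y} : x,y \in \M'\}' \cong \M \otimes \M_\op$, and the observation that $J_\V$ is a weak-$\ast$ closed left ideal, hence of the form $(\M \otimes \M_\op)\,p_\V$. Your contribution is to make the dictionary precise via the Hilbert--Schmidt inner product: invariant subspaces of the self-adjoint algebra $\mathcal{D}$ correspond to orthogonal projections in $\mathcal{D}'$, and the annihilator condition $\Phi_s|_\V = 0$ pins down $\Phi_{p_\V} = P_{\V^\perp}$. (One small thing to make explicit: the identification $\Phi \colon \M \otimes \M_\op \to \mathcal{D}'$ must be a $\ast$-isomorphism for the Hilbert--Schmidt adjoint, so that projections of the abstract algebra land on orthogonal projections of $W$; your computation $T_{x\,y}^\ast = T_{x^\ast\,y^\ast}$ carries over verbatim from $x,y \in \M'$ to $x,y \in \M$ and settles this.)

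One assertion does need repair: ``since $\M$ is finite dimensional, $W = \B(\H)$ is a finite-dimensional Hilbert space'' is false for a general representation --- take $\M = \CC\1 \subset \B(\ell_2)$ --- and your entire argument (the global Hilbert--Schmidt pairing, the trace) requires $\dim \H < \infty$. The fix is one sentence: quantum relations are independent of the chosen representation, as recalled in Section 1, so one may pass to the standard form $\H = L_2(\M)$, which is finite dimensional. This is the same silent reduction the paper performs when it declares $\B(\H)$ quasi-isometric to $\ell_2^n \otimes_2 \ell_2^n$ with $n = \dim \M$; with that sentence added, your proof is complete.
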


In the case of infinite dimensional von Neumann algebras $\M$ the result above
fails and not every quantum relation can be associated with a projection in
$\M \weaktensor \M_\op$. The reason for that is that although the map
$\Phi : \M \otimes \M_\op \to \CB_{\M' \, \M'}(\B(H))$ is bounded and
multiplicative for every finite dimensional algebra $\M$ it is far from
isometric. Indeed its norm explodes with $n = \dim(\M)$. The problem
can be solved by changing the tensor norm from the spatial tensor norm to the
Haagerup tensor norm of the two von Neumann algebras. With that tool at hand we
will be able to prove a generalization of \ref{S1.thm.AbeVfromJ} for general
algebras in the next section.

\subsection{Module Maps and The Haagerup Tensor Product}

Let $E$, $F$ be two operator spaces. We define the bilinear form $\odot :
M_n[E] \times M_n[F] \to M_n[E \algtensor F]$ by
\[
  [x_{i \, j}] \odot [y_{i \, j}] = \Big[ \sum_{k = 1}^n x_{i \, k} \otimes y_{k \, j} \Big]_{i,j}.
\]
Of course such definition makes perfect sense with matrices of different sizes
$\odot : M_{n,m}[E] \times M_{m,l}[F] \to M_{n,l}[E \algtensor F]$ just by
embedding all matrices inside $M_{\max\{n,m,l\}}$ and restricting. The
\emph{Haagerup tensor norm} for $z \in E \algtensor F$ is defined to be
\begin{eqnarray*}
  \| z \|_{h} & = & \inf \{ \| u \|_{M_{1,n}(E)} \, \| u \|_{M_{1,n}(E)} : z = u \odot v \}\\
              & = & \inf \bigg\{ \Big\| \sum_{k = 1}^n x_k x_k^\ast \Big\|^\frac{1}{2} \Big\| \sum_{k = 1}^n y_k^\ast y_k \Big\|^\frac{1}{2} : z = \sum_{k = 1}^n x_k \otimes y_k \bigg\}
\end{eqnarray*}
The Haagerup tensor product $E \Haag F$ is defined as the completion under that
norm. Similarly $E \Haag F$ can be given an o.s.s by defining:
\[
   \| x \|_{M_n[E \Haag F]}
   =
   \inf \{ \| u \|_{M_{n,k}(E)} \, \| u \|_{M_{k,n}(E)} : z = u \odot v \}.
\]
In the case of two dual operator spaces $E^*$ and $F^*$ the \emph{weak-$\ast$
Haagerup tensor product}, introduced in \cite{BleSmi1992} by Blecher and
Smith, is given by
\[
  E^* \otimes_{\wast h} F^* = (E \Haag F)^\ast.
\]
Since the Haagerup tensor norm is self dual, see \cite{EffRu1991SelfDual},
we have that $E^* \Haag F^*$ embeds inside $E^* \otimes_{\wast h} F^*$
isometrically and is weak-$\ast$ dense. This tensor product is a complemented
subspace of the \emph{normal Haagerup tensor product} $E \nHaag F$ introduced
by Effros and Kishimoto \cite{EffKi1987} and which satisfies that
\[
  (E \Haag F)^{\ast \ast} = (E^{\ast \ast} \nHaag F^{\ast \ast}).
\]
In \cite{EffRu2003} Effros and Ruan introduced the \emph{extended Haagerup
tensor product} generalizing the weak-$\ast$ Haagerup tensor to (potentially)
non-dual operator spaces. Indeed if $x = [x_{i \, j}]_{i \, j}$ is a
matrix whose entries are, possibly infinite, sums of simple tensors, we
say that $x \in M_m(E \eHaag F)$ iff
\[
  \| x \|_{M_m(E \eHaag F)} = \inf \{ \| u \|_{M_{m,I}(E)} \, \| v \|_{M_{m,I}(E)} : x = u \odot v \}
\]
for every possible index set $I$. It can be seen that it is enough to
take $I$ to be the smallest cardinality of a dense set in $\H$ with
$E, F \subset \B(\H)$. Particularly when $E$ and $F$ are separable von
Neumann algebras we can take $I$ numerable. In the case of $E^*$, $F^*$ 
being dual operator spaces, we have that
\begin{equation*}
  \begin{array}{lcl}
    E^* \otimes_{\wast h} F^* & = & E^* \eHaag F^*\\
    E^* \nHaag F^*            & = & (E \eHaag F)^*.
  \end{array}
\end{equation*}
The coarser topology in $E^* \eHaag F^*$ making the pairing with every
element in $\M_\ast \eHaag \M_\ast$ continuous is strictly finer than
the weak-$\ast$ topology given by the predual $\M_\ast \Haag \M_\ast$.
Since $E^* \eHaag F^* \subset E^* \nHaag F^*$ is
$\sigma(E^* \eHaag F^*)$-closed, $E^* \eHaag F^*$, with the
$\sigma(E^* \eHaag F^*)$ topology, is a dual space. Its predual is
obtained by a quotient of $E^* \eHaag F^*$

When $E = \N$, $F = \M$ are von Neumann algebras $\N \eHaag \M$ is a
weak-$\ast$ Banach algebra with a jointly completely bounded multiplication,
see \cite[pp. 126]{EffRu2000Book}, given by extension of
\[
  (x \otimes y) \, (z \otimes t) = x \, z \otimes t \, y.
\]
When $\M = \N$ there is also a natural multiplicative involution
$(x \otimes y)^\dagger = y^\ast \otimes x^\ast$.

Recall that the space of completely bounded $\CB(E,F)$ has a natural o.s.s.
given by the identification $M_n(\CB(E,F)) = \CB(E,M_n(F))$. If $E^*$ and
$F^*$ are dual operator spaces we define
$\CB^\sigma(E^*,F^*) \subset \CB(E^*,F^*)$ to be susbspace of all weak-$\ast$
continuous operators. We have a natural identification 
$\CB^\sigma(E^*,F^*) = \CB(F,E)$. When $E,F \subset \B(\H)$ are bimodules
over a von Neumann algebra $\M \subset \B(H)$ we will denote by
$\CB_{\M \, \M}(E,F)$ and $\CB^\sigma_{\M \, \M}(E,F)$ the subspaces of
completely bounded and bimodular operators. Such subspaces are easily seen to
be norm closed. We will treat mainly the case when $E = F = \B(H)$. 
We have, using that $\K(\H)^{\ast \ast} = \B(\H)$ and that
$\CB(E,F^\ast) = \CB^\sigma(E^{\ast \ast}, F^{\ast})$, see
\cite[(1.28)]{BleMer2004Operator}, that
\begin{equation}
  \label{S1.eq.CBsigma}
  \CB^\sigma(\B(\H)) = \CB(\K(\H),\B(H)).
\end{equation}
The identification is given by restriction to $\K(\H) \subset \B(\H)$ and
by passage to the second dual. The identity \ref{S1.eq.CBsigma} allow us to
give a predual for $\CB^\sigma(\B(\H))$ by
\begin{equation}
  \label{S1.eq.CBPredual}
  \begin{array}{rclr}
    \CB(\K(\H),\B(\H)) & = & \CB(\K(\H), \CC) \Fubtensor \B(\H)   & \mbox{(by \cite[Th. 4.1]{Pi2003})}\\
                       & = & (\K(\H) \osprojtensor S_1(\H))^\ast, &
  \end{array}
\end{equation}
where $\Fubtensor$ is the Fubini tensor product, see \cite{EffKraRu1993},
\cite{EffRu2003} or \cite{EffRu2000Book}
which is isomorphic to the dual of the (operator space) projective tensor
product $\osprojtensor$, see \cite[Chap. 7]{EffRu2000Book}. Similarly the
predual of $\CB(\B(\H))$ is given by $\B(\H) \osprojtensor S_1(\H)$. In
both cases the pairing is given by linear extension of
$\langle T \otimes \xi, \Psi \rangle = \langle \xi, \Psi(T) \rangle$, for
$\Psi \in \CB(\B(\H))$. A subtle point is that the coarser topology in
$\CB^\sigma(\B(\H))$ making the paring with all the elements in
$\B(\H) \osprojtensor S_1(H)$ continuous is, in general, strictly finer
than the weak-$\ast$ topology given by the predual
$\K(\H) \osprojtensor S_1(\H)$. To see that, notice that the following
inclusion holds
\[
  \K(\H) \osprojtensor S_1(\H) \subset \B(\H) \osprojtensor S_1(\H).
\]
Indeed, the inclusion above is just a consequence of the fact that
$\K(\H) \subset \B(\H)$ and the injectivity of the functor
$E \mapsto \M_\ast \osprojtensor E$, where $\M_\ast$ is the predual of any
\emph{hyperfinite} von Neumann algebra, see \cite{Pi1998}. Since
$\sigma(\K(\H) \osprojtensor S_1(\H))$-closed sets are
$\sigma(\B(\H) \osprojtensor S_1(\H))$-closed we have that
$\CB^\sigma(\B(\H)) \subset \CB(\B(\H))$ is
$\sigma(\B(\H) \osprojtensor S_1(\H))$-closed and so the
$\sigma(\B(\H) \osprojtensor S_1(\H))$ topology induces another predual
for $\CB^\sigma(\B(\H))$.
Clearly, the topology of pointwise weak-$\ast$ convergence in
$\CB^\sigma(\B(\H))$ is coarser than the ${\sigma(\B(\H) \osprojtensor} S_1(\H))$
topology. Analogously, the topology of pointwise (in $\K(\H)$) weak-$\ast$
topology is coarser that the $\sigma(\K(\H) \osprojtensor S_1(\H))$ topology.
In both cases the topologies coincide over bounded sets.

The subspace of bimodular operators $\CB^\sigma_{\M \, \M}(\B(\H))$ is closed
in both the $\sigma(\K(\H) \allowbreak \osprojtensor S_1(\H))$ and the
$\sigma(\B(\H) \osprojtensor S_1(\H))$ topologies. Indeed, it is closed in the
$\K(\H)$-pointwise weak-$\ast$ topology which is coarser than both. As a
consequence, using the Hanhn-Banach Theorem, we get that
$\CB^\sigma_{\M \, \M}(\B(\H))$ inherits two natural preduals topologies
\begin{eqnarray*}
  \CB^\sigma_{\M \, \M}(\B(\H)) & = & (\B(\H) \osprojtensor S_1(\H) / K_2 )^\ast,\\
  \CB^\sigma_{\M \, \M}(\B(\H)) & = & (\K(\H) \osprojtensor S_1(\H) / K_1 )^\ast,
\end{eqnarray*}
where $K_1$, $K_2$ are the corresponding preannihilators. Similarly
$\CB_{\M \, \M}(\B(\H))$ is also a dual space with the
${\sigma(\B(\H)} \osprojtensor S_1(\H))$ topology.
The spaces $\CB_{\M \, \M}(\K(\H))$, $\CB^\sigma_{\M \, \M}(\B(\H))$ and
$\CB_{\M \, \M}(\B(\H))$ are Banach algebras with the composition operation.
They have a natural multiplicative involution given by
$\Psi_1^\dagger(T) = \Psi_1(T^\ast)^\ast$ and satisfying that
$(\Psi_1 \Psi_2)^\dagger = \Psi_1^\dagger \Psi_2^\dagger$.

\begin{example}
  Recall that in
  the case of $\M = \ell_\infty(X) \subset \B(\ell_2 X)$ we have that
  \begin{eqnarray*}
    \CB^\sigma_{\ell_\infty(X) \, \ell_\infty(X)}(\B(\ell_2 X)) & = & \Schur^\sigma(X),\\
    \CB_{\ell_\infty(X) \, \ell_\infty(X)}(\B(\ell_2 X))        & = & \Schur(X).
  \end{eqnarray*}
  For non-discrete measure spaces $(X,\mu)$ we have that
  $\CB^\sigma_{L_\infty(X) \, L_\infty(X)}(\B(L_2 X))$ corresponds to the algebra
  of \emph{measurable Schur multipliers}, see \cite{Spronk2004}.
\end{example}

Now we are in position of stating the isomorphism between Haagerup tensors and
bimodular operators.

\begin{theorem}
  \label{S1.thm.HaagToCB}
  Let $\M \subset \B(\H)$ be a von Neumann algebra. The map $\Phi$ defined 
  by $x \otimes y \mapsto \Phi_{x \otimes y}$, where
  \[
    \Phi_{x \otimes y}(T) = x \, T \, y,
  \]
  extends to a surjective complete isometry and a $\dagger$-preserving
  homomorphism between the following spaces
  \begin{enumerate}[label=\emph{(\roman*)}, ref=(\roman*)]
    \item \label{S1.thm.HaagtoCB.1}
    $\Phi : \M \Haag \M \to \CB_{\M' \, \M'}(\K(\H))$.
    \item \label{S1.thm.HaagtoCB.2}
    $\Phi : \M \eHaag \M \to \CB^{\sigma}_{\M' \, \M'}(\B(H))$.
    \item \label{S1.thm.HaagtoCB.3}
    $\Phi : \M \nHaag \M \to \CB_{\M' \, \M'}(\B(H))$.
  \end{enumerate}
  Furthermore, the map in \emph{\ref{S1.thm.HaagtoCB.3}} is
  $\sigma(\M_\ast \eHaag \M_\ast)$ to $\sigma(\B(\H) \osprojtensor S_1(\H))$
  continuous and the map in \emph{\ref{S1.thm.HaagtoCB.2}} is both
  $\sigma(\M_\ast \Haag \M_\ast)$ to $\sigma(\K(\H) \osprojtensor S_1(\H))$
  continuous and $\sigma(\M_\ast \eHaag \M_\ast)$ to
  $\sigma(\B(\H) \osprojtensor S_1(\H))$ continuous.
\end{theorem}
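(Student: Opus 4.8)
My plan is to dispatch the easy analytic bound and the algebra uniformly, and to concentrate the real work in a single representation step, which I would carry out first for the ordinary Haagerup tensor product and then transport to the other two by duality. For the upper bound, fix a representation $z=u\odot v$ of $z\in\M\algtensor\M$, say $z=\sum_{k}x_k\otimes y_k$ with $u=[x_1,\dots,x_n]\in M_{1,n}(\M)$ and $v=[y_1,\dots,y_n]^{t}\in M_{n,1}(\M)$, and record the factorization $\Phi_z(T)=u\,\mathrm{diag}(T,\dots,T)\,v$. This gives $\|\Phi_z(T)\|\le\|u\|_{M_{1,n}(\M)}\,\|T\|\,\|v\|_{M_{n,1}(\M)}$, and, after the evident matrix amplification, $\|\Phi_z\|_{\cb}\le\|u\|_{M_{1,n}(\M)}\|v\|_{M_{n,1}(\M)}$; taking the infimum over all representations yields $\|\Phi_z\|_{\cb}\le\|z\|_{h}$. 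The same factorization, now with an arbitrary index set and row/column convergence in the weak-$\ast$ topology, gives complete contractivity on $\M\eHaag\M$ and on $\M\nHaag\M$. That $\Phi$ is a $\dagger$-preserving homomorphism I would check on elementary tensors, since $\Phi_{x\otimes y}\circ\Phi_{z\otimes t}(T)=xzTty=\Phi_{xz\otimes ty}(T)$ and $\Phi_{x\otimes y}^{\dagger}(T)=(xT^{\ast}y)^{\ast}=y^{\ast}Tx^{\ast}=\Phi_{(x\otimes y)^{\dagger}}(T)$, extending by density together with the continuity established below.

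The heart of the matter is the reverse inequality and surjectivity, that is, the statement that every completely bounded $\M'$-bimodule map on $\K(\H)$ has the sandwich form with coefficients in $\M$ and with $\|z\|_{h}\le\|\Phi_z\|_{\cb}$. For case \ref{S1.thm.HaagtoCB.1} I would invoke the representation theorem for completely bounded module maps: a completely bounded $\Psi$ dilates as $\Psi(T)=V^{\ast}\pi(T)W$ through a $\ast$-representation $\pi$, and bimodularity over $\M'$ forces $V$ and $W$ to intertwine $\pi$ with the $\M'$-action, so that upon compressing to a minimal dilation the resulting sandwiching coefficients lie in the bicommutant $\M''=\M$. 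Reading off a generalized row $u$ and column $v$ from $V$ and $W$ then produces $z=u\odot v\in\M\Haag\M$ with $\Phi_z=\Psi$ and $\|z\|_{h}\le\|\Psi\|_{\cb}$, giving the surjective complete isometry onto $\CB_{\M'\M'}(\K(\H))$. I expect this to be the main obstacle: the two delicate points are forcing the coefficients into the commutant rather than merely into $\B(\H)$, and matching the Haagerup norm \emph{exactly}, both of which hinge on a careful, minimal choice of the dilation.

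With case \ref{S1.thm.HaagtoCB.1} in hand, I would obtain \ref{S1.thm.HaagtoCB.2} and \ref{S1.thm.HaagtoCB.3} as weak-$\ast$ completions by duality. Using $\M\eHaag\M=(\M_\ast\Haag\M_\ast)^{\ast}$ and $\M\nHaag\M=(\M_\ast\eHaag\M_\ast)^{\ast}$ from the identities recorded above, together with the two predual presentations of $\CB^{\sigma}_{\M'\M'}(\B(\H))$ and of $\CB_{\M'\M'}(\B(\H))$ coming from \ref{S1.eq.CBPredual} and the preannihilators $K_1,K_2$, I would exhibit $\Phi$ in \ref{S1.thm.HaagtoCB.2} and \ref{S1.thm.HaagtoCB.3} as the adjoint of an isometric identification of the respective preduals, namely $\M_\ast\Haag\M_\ast\cong\K(\H)\osprojtensor S_1(\H)/K_1$ and $\M_\ast\eHaag\M_\ast\cong\B(\H)\osprojtensor S_1(\H)/K_2$. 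Since $\M\Haag\M$ is weak-$\ast$ dense in $\M\eHaag\M$, which in turn is complemented and weak-$\ast$ dense in $\M\nHaag\M$, and since a normal bimodule map on $\B(\H)$ is recovered from its restriction to $\K(\H)$ by passage to the second dual as in \ref{S1.eq.CBsigma}, the complete isometry and the surjectivity propagate from case \ref{S1.thm.HaagtoCB.1} by a bipolar argument and the weak-$\ast$ lower semicontinuity of the norm. The asserted continuity statements are then automatic, as each version of $\Phi$ is by construction the adjoint of a bounded map between the preduals: this yields the $\sigma(\M_\ast\Haag\M_\ast)$-to-$\sigma(\K(\H)\osprojtensor S_1(\H))$ continuity in \ref{S1.thm.HaagtoCB.2} and the $\sigma(\M_\ast\eHaag\M_\ast)$-to-$\sigma(\B(\H)\osprojtensor S_1(\H))$ continuity in \ref{S1.thm.HaagtoCB.2} and \ref{S1.thm.HaagtoCB.3}.
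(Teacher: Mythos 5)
Your contraction estimate, the homomorphism and $\dagger$ computations, and your identification of Smith's intertwining theorem (coefficients forced into the commutant) as the crux all match the paper; but your architecture inverts the paper's --- the paper applies Wittstock's factorization plus \cite[Theorem 3.1]{Smith1991} directly to maps $\K(\H)\to\B(\H)$, i.e.\ it proves \ref{S1.thm.HaagtoCB.2} first --- and the two steps that are specific to your route both have genuine gaps. The first is in your proof of \ref{S1.thm.HaagtoCB.1}: the minimal Wittstock dilation of $\Psi\in\CB_{\M'\,\M'}(\K(\H))$ produces a row $u=(x_k)_k$ and a column $v=(y_k)_k$ over $\M$ that are merely \emph{bounded}, so that $\sum_k x_k x_k^{\ast}$ and $\sum_k y_k^{\ast}y_k$ converge only in the weak-$\ast$ sense; this exhibits $u\odot v$ as an element of $\M\eHaag\M$, not of $\M\Haag\M$, whose elements require \emph{norm}-convergent rows and columns. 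This is not something a more careful choice of dilation can repair. Take $\M=\M'=\ell_\infty(\NN)$ acting diagonally and let $\Psi=E$ be the conditional expectation onto the diagonal: $E$ maps $\K(\H)$ into $\K(\H)$, is completely contractive and bimodular, and evaluating on matrix units shows its only possible symbol is the identity matrix $[\delta_{ij}]$. If $E=\Phi_s$ with $s\in\ell_\infty\Haag\ell_\infty$, then for any finite sum $F=\sum_{k\leq n}a_k\otimes b_k$ with $\|s-F\|_h<\epsilon$ one gets $|\delta_{ij}-F_{ij}|\leq\|s-F\|_h<\epsilon$ uniformly in $(i,j)$ with $[F_{ij}]$ of finite rank; equivalently $\chi_{\Delta}$ would extend continuously to $\beta\NN\times\beta\NN$, which it does not, since every point $(p,p)$ with $p$ a free ultrafilter lies in the closure of the diagonal and of its complement. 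So compact-valued bimodular maps need not admit $h$-convergent representations at all; the factorization argument lives naturally exactly where the paper runs it, on maps into $\B(\H)$, where bounded rows and columns are precisely what the $\eHaag$-norm asks for.

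The second gap is the duality transport. Weak-$\ast$ continuity of $\Phi$ together with isometry on the weak-$\ast$ dense subspace $\M\Haag\M$ does not by itself propagate either the isometry or the surjectivity: the bipolar/compactness argument only shows that $\Phi\big(\Ball(\M\eHaag\M)\big)$ is a weak-$\ast$ compact convex set containing $\Ball\big(\CB_{\M'\,\M'}(\K(\H))\big)$. To conclude that it equals $\Ball\big(\CB^{\sigma}_{\M'\,\M'}(\B(\H))\big)$ --- or, for the lower norm bound, to approximate a given $\Phi_s$ point-weak-$\ast$ by compact-valued bimodular maps of no larger $\cb$-norm --- you need a Kaplansky-type density theorem for the inclusion $\CB_{\M'\,\M'}(\K(\H))\subset\CB_{\M'\,\M'}(\K(\H),\B(\H))$. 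That density is nowhere established in your argument, and it is essentially equivalent to the surjectivity in \ref{S1.thm.HaagtoCB.2} that it is supposed to deliver: truncating a Wittstock representation preserves the $\cb$-norm bound and compact values but destroys $\M'$-bimodularity, unless the coefficients are already known to lie in $\B(\ell_2)\weaktensor\M$, which is the very point at issue. Likewise, exhibiting $\Phi$ as ``the adjoint of a bounded map between the preduals'' is not automatic: it presupposes that the slice functionals $s\mapsto\langle\xi,\Phi_s(T)\rangle$ land in, and fill out, $\M_\ast\Haag\M_\ast$ resp.\ $\M_\ast\eHaag\M_\ast$, which is the content of the paper's Lemma \ref{S1.lem.dual} rather than a formality.
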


The result above is well known to the experts, although their pieces are scattered
throughout the literature. We will just give a brief sketch with
references. Recall too that the first appearance of such result is credited
to be in an unpublished note of Haagerup \cite{HaagSD}. 

\begin{proof}
  Let us concentrate on \ref{S1.thm.HaagtoCB.2}, which will be the most
  important for our applications. The fact that $\Phi$ is a complete contraction
  amounts to a trivial calculation. Indeed, if $s = \sum_{j} x_j \otimes y_j$
  we may define, for every $1 \leq n$, the matrices
  \[
    \begin{array}{rc>{\displaystyle}lrc>{\displaystyle}l}
      x & = & \sum_{i = 0}^n \sum_{j} e_{i \, j} \otimes x_{j}, &
      y & = & \sum_{j = 0}^n \sum_{i} e_{i \, j} \otimes x_{i}
    \end{array}
  \]
  inside $\B(\ell_2) \weaktensor \M$, where $\{e_{i \, j}\}$ is a system
  of matrix units. Then $(\Id_{M_n} \otimes \Phi_s)(T)$ satisfies that
  \[
    (\Id_{M_n} \otimes \Phi_s)(T) = P_n \, x \, (\1 \otimes T) \, y \, P_n,
  \]
  where $P_n$ is the orthogonal projection on the span of
  $\{e_{j}\}_{j \leq n}$. Clearly
  \[
    \| \Id_{M_n} \otimes \Phi_s \| \leq \| x^\ast x \|^\frac{1}{2}_{\M} \, \| y \, y^\ast \|^\frac{1}{2}_{\M}
  \]
  and $\Phi$ is an $\M'$-bimodular operator. Taking the supremum over
  $n \geq 1$ and the infimum over all representations of $s$ gives that
  $\| \Phi_s \|_{\cb} \leq \| s \|_{\M \eHaag \M}$. To see that it is
  surjective notice that if $\Psi \in \CB_{\M' \, \M'}^\sigma(\B(\H))
  = \CB_{\M' \, \M'}(\K(\H),\B(\H))$ by Wittstock's factorization theorem
  for c.b. maps, see \cite{Pa1986Book}, we have that there
  is a large enough $\ell_2$ (we can take the dimension  of $\ell_2$ to be
  equal to that of $\H$ for infinite dimensional spaces), a representation
  $\pi : \K(\H) \to \B(\ell_2 \otimes_2 H)$ and two elements
  $x \in \B(\ell_2 \otimes_2 \H, \H)$, $y \in \B(\H, \ell_2 \otimes_2 \H)$
  such that $\Psi(x) = x \, \pi(x) \, y$ and
  $\| \Psi \|_{\cb} = \| x \| \, \| y \|$ but we can identify $x$ and
  $y$ with a row and a column respectively inside
  $\B(\ell_2) \weaktensor \B(\H)$ and  we have that $\Psi = \Phi_s$, where
  $s = x \odot y \in \B(\H) \eHaag \B(\H)$. It only rest to prove that if
  $\Psi$ is $\M'$-bimodular we can pick $x,y \in \B(\ell_2) \weaktensor \M$,
  which is the main result in \cite[Theorem 3.1]{Smith1991}.
  The rest of the points are similarly proved, see also
  \cite{BleSmi1992} for \ref{S1.thm.HaagtoCB.3}.
\end{proof}

As a consequence of the preceding theorem we are going to identify at times
$\M \eHaag \M$ and its weak-$\ast$ topology with
$\CB_{\M' \, \M'}^\sigma(\B(\H))$ and $\sigma(\B(\H) \osprojtensor S_1(\H))$.
The following lemma describe the weak-$\ast$ continuous functionals on
$\M \eHaag \M$ for its different preduals.

\begin{lemma}
  \label{S1.lem.dual}
  Let $\phi \in (\M \eHaag \M)^\ast$, then
  \begin{enumerate}[label=\emph{(\roman*)}, ref=(\roman*)]
    \item \label{S1.lem.dual.1} $\phi$ is
    $\sigma(\M_\ast \eHaag \M_\ast)$-continuous iff 
    \[
      \langle \phi, s \rangle = \big\langle C, (\Id \otimes \Phi_s)(B) \big\rangle,
    \] 
    where $C \in S_1(\ell_2 \otimes_2 \H)$ and $B \in \B(\ell_2 \otimes_2 \H)$
    \item \label{S1.lem.dual.2} $\phi$ is
    $\sigma(\M_\ast \Haag M_\ast)$-continuous iff
    \[
      \langle \phi, s \rangle = \big\langle C, (\Id \otimes \Phi_s)(B) \big\rangle,
    \]
    where $C \in S_1(\ell_2 \otimes_2 \H)$ and $B \in \K(\ell_2 \otimes_2 \H)$
  \end{enumerate} 
  Furthermore, $\phi$ is pointwise weak-$\ast$ continuous, iff
  $B$ in \emph{\ref{S1.lem.dual.1}} can be taken in
  $\B(\ell_2^n \otimes_2 H)$. Similarly, $\phi$
  is $\K(\H)$-pointwise weak-$\ast$ continuous iff we can take
  $B \in \K(\ell^n_2 \otimes_2 \H)$.
\end{lemma}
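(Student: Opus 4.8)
The plan is to transport the whole question to the operator side via the complete isometry $\Phi$ of Theorem \ref{S1.thm.HaagToCB}, and then to read off the continuous functionals directly from the two preduals of $\CB^\sigma_{\M' \, \M'}(\B(\H))$ recorded above, namely $\K(\H) \osprojtensor S_1(\H)$ for the $\sigma(\K(\H) \osprojtensor S_1(\H))$ topology and $\B(\H) \osprojtensor S_1(\H)$ for the finer $\sigma(\B(\H) \osprojtensor S_1(\H))$ topology. By the last assertion of Theorem \ref{S1.thm.HaagToCB} the map $\Phi$ is simultaneously $\sigma(\M_\ast \eHaag \M_\ast)$-to-$\sigma(\B(\H) \osprojtensor S_1(\H))$ continuous and $\sigma(\M_\ast \Haag \M_\ast)$-to-$\sigma(\K(\H) \osprojtensor S_1(\H))$ continuous, so a functional $\phi$ on $\M \eHaag \M$ is continuous for one of these topologies precisely when $\phi \circ \Phi^{-1}$ lies in the corresponding predual, i.e. is represented modulo the relevant preannihilator $K_1$ or $K_2$ by an element of $\K(\H) \osprojtensor S_1(\H)$, resp. $\B(\H) \osprojtensor S_1(\H)$. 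Thus \ref{S1.lem.dual.1} and \ref{S1.lem.dual.2} reduce to putting an arbitrary element of the relevant projective tensor product into the stated amplified shape.

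For the forward direction I would begin from the canonical factorization of an element $u$ of the operator space projective tensor product, writing $u = \sum_k B_k \otimes \xi_k$ with $B_k \in \B(\H)$ (resp. $\K(\H)$) and $\xi_k \in S_1(\H)$, so that the pairing becomes $\langle u, \Phi_s \rangle = \sum_k \langle \xi_k, \Phi_s(B_k) \rangle$. The key device is a linearization by stacking: assemble the column $B = \sum_k e_{k,0} \otimes B_k$ and the row $C = \sum_k e_{0,k} \otimes \xi_k$ inside the amplification over $\ell_2 = \ell_2(I)$. A direct trace computation, using that $(\Id \otimes \Phi_s)(B) = \sum_k e_{k,0} \otimes \Phi_s(B_k)$, then yields
\[
  \big\langle C, (\Id \otimes \Phi_s)(B) \big\rangle = \sum_k \langle \xi_k, \Phi_s(B_k) \rangle = \langle u, \Phi_s \rangle,
\]
which is exactly the claimed formula. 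The converse is the same computation read backwards together with the continuity of $\Phi$: for fixed $B$ and $C$ the assignment $s \mapsto \langle C, (\Id \otimes \Phi_s)(B) \rangle$ is, by Theorem \ref{S1.thm.HaagToCB}, continuous for the appropriate topology, so every functional of the displayed shape is indeed weak-$\ast$ continuous.

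The hard part will be the norm matching in the stacking step, that is, checking that the row and column assembled above genuinely define a bounded (resp. compact) operator $B$ and a trace-class operator $C$, with the correct summability over the possibly countable index set $I$. This is where the structure theory of $\osprojtensor$ is used: the projective factorization $u = \alpha (x \otimes y) \beta$ matches the row and column norms that control $\|B\|$ and $\|C\|_{S_1}$, and guarantees one can pass between a general predual element and such an amplified pair. The distinction between \ref{S1.lem.dual.1} and \ref{S1.lem.dual.2} then concerns only the first tensor factor: in the $\Haag$ case each $B_k$ is compact and the stacked column, being a norm limit of finite columns of compacts, is itself compact, so $B \in \K(\ell_2 \otimes_2 \H)$, whereas in the $\eHaag$ case only $B \in \B(\ell_2 \otimes_2 \H)$ is available; in both cases $C$ stays trace-class. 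Finally, for the two pointwise statements I would use that a functional continuous for the topology of pointwise weak-$\ast$ convergence, being continuous for an initial topology generated by the evaluations $\Psi \mapsto \langle \xi, \Psi(T) \rangle$, depends on only finitely many such data; stacking these finitely many $(T_k, \xi_k)$ produces $B$ in the finite amplification $\B(\ell_2^n \otimes_2 \H)$ with $T_k \in \B(\H)$, resp. $\K(\ell_2^n \otimes_2 \H)$ with $T_k \in \K(\H)$, as asserted.
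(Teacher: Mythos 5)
Your overall strategy coincides with the paper's: transport the problem through the complete isometry $\Phi$ of Theorem \ref{S1.thm.HaagToCB}, identify the continuous functionals with liftings of elements of the two projective tensor product preduals, and convert such an element into the amplified pairing $\langle C, (\Id \otimes \Phi_s)(B)\rangle$; your converse direction and your treatment of the two pointwise statements (domination by finitely many evaluation seminorms, then a finite stacking) are exactly the paper's arguments and are fine.

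The gap sits precisely at the step you defer as ``the hard part,'' and that step is the entire content of the lemma. Your construction presupposes that a general element of $\B(\H) \osprojtensor S_1(\H)$ (resp. $\K(\H) \osprojtensor S_1(\H)$) can be written as a series $\sum_k B_k \otimes \xi_k$ of simple tensors with enough summability to make the stacked column $B$ bounded and the stacked row $C$ trace class. That is a description of the \emph{Banach} projective tensor product; the operator space projective norm is dominated by the Banach projective norm, so $\B(\H) \osprojtensor S_1(\H)$ is a genuinely larger completion whose elements admit no such expansion in general. What they do admit is the matrix factorization $u = \alpha \, (x \otimes y) \, \beta$ with $x \in \K_{I} \mintensor \B(\H)$, $y \in \K_{I} \mintensor S_1(\H)$ and $\alpha, \beta$ Hilbert--Schmidt, and this factorization does \emph{not} ``match row and column norms'': the operator-valued leg cannot be squeezed into a single column (if $x$ has a fixed nonzero entry $b$ down its diagonal, the re-indexed column has column norm $\| \sum_{p} b^\ast b \|^{1/2} = \infty$), so one must keep $B$ equal to the full matrix $x$ and absorb \emph{both} Hilbert--Schmidt scalings into the trace-class leg, setting $C = (\alpha^\ast \otimes \1) \, y \, (\beta \otimes \1)$. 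The assertion that this $C$ lies in $S_1(\ell_2 \otimes_2 \H)$ is Pisier's vector-valued Schatten identification $S_1[S_1(\H)] \simeq S_1(\ell_2 \otimes_2 \H)$, i.e. \cite[Theorem 1.5]{Pi1998}, which is how the paper concludes; this absorption step and the appeal to vector-valued Schatten classes are the missing ingredients, and without them the ``norm matching'' you invoke cannot be carried out.
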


\begin{proof}
  We will prove \ref{S1.lem.dual.1} first. Since, by  Theorem
  \ref{S1.thm.HaagToCB}, the predual for the $\sigma(\M_\ast \Haag \M_\ast)$
  topology is given by
  $(\M \eHaag \M)_\ast = (\K(\H) \osprojtensor S_1(\H)) / F$, where $F$
  is the preannihilator of the $\M'$-bimodular maps, $\phi$
  can be lifted to an element (that we will denote also by $\phi$) in
  $\K(\H) \osprojtensor S_1(H)$ inducing the same functional. By definition of
  the o.s. projective tensor product we have that there are, possibly infinite,
  index sets $I_1$, $I_2$ and elements $A \in \K_{I_1} \mintensor S_1(H)$,
  $B \in \K_{I_2} \mintensor \K(\H)$ and 
  $\alpha, \beta \in S_2(\ell_2^{I_1},\ell_2^{I_2})$, where
  $\K_{I_i} = \K(\ell_2^{I_i})$, such that
  \[  
    \phi = \sum_{i,j \in I_1 \, p,q \in I_2} \alpha_{i p} (B_{i j} \otimes A_{p q}) \beta_{j q}.
  \]
  The action on $s \in \M \eHaag \M$ is given by
  \begin{eqnarray*}
    \langle \phi, s \rangle & = & \sum_{i,j \in I_1 \, p,q \in I_2} \alpha_{i p} \, \langle A_{i j}, \Phi_s(B_{p q}) \rangle \, \beta_{j q}\\
                            & = & \sum_{p,q \in I_2} \Big\langle \sum_{i, j \in I_1} \bar{\alpha}_{i p} \, A_{i j} \, \beta_{j q}, \Phi_s(B_{p q}) \Big\rangle\\
                            & = & \big\langle (\alpha^* \otimes \1) \, A \, (\beta \otimes \1), (\Id_{\K_{I_2}} \otimes \Phi_s)(B) \big\rangle.
  \end{eqnarray*}
  Note that, by \cite[Theorem 1.5]{Pi1998},
  $C = (\alpha^* \otimes \1) \, A \, (\beta \otimes \1) \in
  S_1(\ell_2^{I_2})[S_1(H)] \simeq S_1(\ell_2^{I_2} \otimes_2 H)$. We have
  thus that every weak-$\ast$ continuous functional $\phi$ can be expressed
  as
  \[
    \langle \phi, s \rangle = \big\langle C, (\Id_{\K} \otimes \Phi_s)(B) \big\rangle,
  \]
  concluding the proof of \ref{S1.lem.dual.1}. The same techniques yield
  \ref{S1.lem.dual.2}.
  
  The other claims in the statement follows by a repetition of the ideas used
  to prove that SOT-continuous and WOT-continuous functionals coincide over
  $\B(\H)$. Indeed, assume $\phi$ is pointwise weak-$\ast$ continuous. Then,
  there are finite collection $T_1, ..., T_m \in \B(\H)$ and
  $\xi_1, \xi_2, ..., \xi_m \in S_1(\H)$ such that $|\phi(\Psi)| < 1$
  whenever $|\langle \xi_i , \Psi(T_i)\rangle| < \epsilon$ for
  $i \in \{1,2,...,m\}$. In particular, taking
  $\Psi' = \Psi / \max\{ |\langle \xi_i , \Psi(T_i)\rangle| \}$ gives
  \[
    |\phi(\Psi)| \leq \epsilon^{-1} \, \max\{ |\langle \xi_i , \Psi(T_i)\rangle| \} \leq \epsilon^{-1} \, \sum_{i = 1}^m | \langle \xi_i, \Psi(T_i) \rangle |.
  \]
  As a consequence, if $\Psi(T_i) = 0$, for $i \in \{1,2,...,m\}$, we
  have $\phi(\Psi) = 0$ and so $\phi$ factors through a finite
  dimensional space. Therefore, $\phi$ can be expressed as a
  finite combination of simple tensors. 
\end{proof}

\section{{\bf The Correspondence Between Ideals and Modules}}

In this section we are going to prove the correspondence between left ideals in
$\M \eHaag \M$ and quantum relations over $\M$. We are going to start recalling
two easy lemmas that will be thoroughly used in this section. The
first asserts that the bilinear form $\odot$ can be extended
from $M_n[\M]$ to $\B(\ell_2) \weaktensor \M$, where $\weaktensor$ is the
weak-$\ast$ closed spatial tensor or equivalently, since $\B(\ell_2)$ is a
von Neumann algebra, the Fubini tensor product. The second is a stability
result for weak-$\ast$ closed left ideals in $\M \eHaag \M$. In the forthcoming
text we are going to denote by $\B(\ell_2) \weaktensor (\M \eHaag \M)$ the
weak-$\ast$ closed tensor product, with respect to the
$\sigma(\B(\H) \osprojtensor S_1(\H))$ topology. Recall that, using the
following identifications
\begin{eqnarray*}
  \B(\ell_2) \weaktensor (\M \eHaag \M) & \cong & \B(\ell_2) \weaktensor \CB^\sigma_{\M' \, \M'}(\B(\H))\\
                                        & \cong & \CB^\sigma_{\M' \, \M'}(\B(\H),\B(\ell_2 \otimes_2 \H)) 
\end{eqnarray*}
and reasoning like in \eqref{S1.eq.CBPredual}, we have that the predual of
$\B(\ell_2) \weaktensor (\M \eHaag \M)$ can be expressed as a quotient of
$\B(\H) \osprojtensor S_1(\ell_2 \otimes_2 \H)$.

\begin{lemma}
  \label{S2.lem.cntodot}
  The bilinear map $\odot: \B(\ell_2) \weaktensor \M \times \B(\ell_2)
  \weaktensor \M \to \B(\ell_2) \weaktensor (\M \eHaag \M)$ is bounded and
  continuous over bounded sets if $\B(\ell_2) \weaktensor \M \times \B(\ell_2)
  \weaktensor \M$ is given the product strong operator topology $\mathrm{(SOT)}$
  and $\B(\ell_2) \weaktensor (\M \eHaag \M)$ the
  $\sigma(\B(\H) \osprojtensor S_1(\H))$ topology.
\end{lemma}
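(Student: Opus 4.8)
The plan is to realize the map $\odot$ through the identification of Theorem \ref{S1.thm.HaagToCB} and thereby reduce the statement to a routine fact about multiplication of operators. Viewing $x, y \in \B(\ell_2) \weaktensor \M$ as operators on $\ell_2 \otimes_2 \H$, I would attach to the pair $(x,y)$ the map $\Phi_{x,y} : T \mapsto x \, (\1 \otimes T) \, y$. Since $(\B(\ell_2) \weaktensor \M)' = \CC\1 \otimes \M'$, both $x$ and $y$ commute with $\1 \otimes a$ for $a \in \M'$, so $\Phi_{x,y}$ is $\M'$-bimodular; it is completely bounded with $\|\Phi_{x,y}\|_\cb \leq \|x\| \, \|y\|$ because $T \mapsto \1 \otimes T$ is a normal $\ast$-homomorphism and left/right multiplication by bounded operators is completely contractive. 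Hence $\Phi_{x,y} \in \CB^{\sigma}_{\M' \, \M'}(\B(\H), \B(\ell_2 \otimes_2 \H)) \cong \B(\ell_2) \weaktensor (\M \eHaag \M)$, and on finite matrices it agrees with the algebraic $\odot$. This one identification simultaneously furnishes the extension of $\odot$ to $\B(\ell_2) \weaktensor \M$ and the boundedness claim.

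For continuity I would first collapse the target topology. Over bounded sets, weak-$\ast$ convergence in $\B(\ell_2) \weaktensor (\M \eHaag \M)$ can be tested against any total subset of its predual; since the latter is, as recalled above, a quotient of $\B(\H) \osprojtensor S_1(\ell_2 \otimes_2 \H)$, the elementary tensors $T \otimes \omega$ are total, and they pair with an element $\Psi$ by $\langle T \otimes \omega, \Psi \rangle = \langle \omega, \Psi(T) \rangle$. Thus it suffices to show that whenever $x_\alpha \to x$ and $y_\alpha \to y$ in the SOT with $\|x_\alpha\|, \|y_\alpha\| \leq M$, then for each fixed $T \in \B(\H)$ one has
\[
  x_\alpha \, (\1 \otimes T) \, y_\alpha \longrightarrow x \, (\1 \otimes T) \, y
\]
in the weak-$\ast$ topology of $\B(\ell_2 \otimes_2 \H)$, which on the bounded set $\{\, \|S\| \leq M^2 \|T\| \,\}$ coincides with the WOT.

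The last step is the standard argument that multiplication is jointly SOT-to-WOT continuous on bounded sets. Writing $A = \1 \otimes T$ and splitting
\[
  x_\alpha \, A \, y_\alpha - x \, A \, y = x_\alpha \, A \, (y_\alpha - y) + (x_\alpha - x) \, A \, y,
\]
for vectors $\xi, \eta \in \ell_2 \otimes_2 \H$ the second term gives $\langle (x_\alpha - x)(A y \xi), \eta \rangle \to 0$ because $x_\alpha \to x$ in SOT against the fixed vector $A y \xi$, while the first term equals $\langle A (y_\alpha - y) \xi, x_\alpha^\ast \eta \rangle$ and tends to $0$ since $\|A (y_\alpha - y)\xi\| \to 0$ and $\|x_\alpha^\ast \eta\| \leq M \|\eta\|$ stays bounded. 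Replacing the rank-one functional by an arbitrary trace-class $\omega \in S_1(\ell_2 \otimes_2 \H)$ is then justified on the bounded set by the coincidence of WOT and weak-$\ast$.

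The step I expect to be the main obstacle is not the limit computation, which is elementary, but the bookkeeping of topologies preceding it: checking that $x \odot y$ is genuinely the element of $\B(\ell_2) \weaktensor (\M \eHaag \M)$ produced by the isomorphism of Theorem \ref{S1.thm.HaagToCB}, and that over bounded sets the $\sigma(\B(\H) \osprojtensor S_1(\ell_2 \otimes_2 \H))$ topology reduces to pointwise weak-$\ast$ convergence, so that testing against the elementary tensors $T \otimes \omega$ is legitimate. Once these identifications are secured, the convergence is immediate from the displayed splitting.
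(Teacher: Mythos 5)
Your proposal is correct and follows essentially the same route as the paper: both arguments reduce the target topology, over bounded sets, to pointwise weak-$\ast$ convergence against elementary tensors $S \otimes \xi$ with $S \in \B(\H)$, $\xi \in S_1(\ell_2 \otimes_2 \H)$, and then apply the identical splitting $x_\alpha (\1 \otimes S) y_\alpha - x (\1 \otimes S) y = x_\alpha (\1 \otimes S)(y_\alpha - y) + (x_\alpha - x)(\1 \otimes S) y$ to get joint SOT-to-weak continuity of multiplication on bounded sets. The only cosmetic difference is how the trace-class functional is handled: you test against rank-one functionals and then invoke the coincidence of WOT and weak-$\ast$ on bounded sets, whereas the paper factors $\xi = \eta \, \zeta^\ast$ with $\eta, \zeta$ Hilbert--Schmidt and runs the same estimate as a vector-functional computation in the representation on $S_2(\ell_2 \otimes_2 \H)$; your explicit verification of boundedness and of the compatibility of $\odot$ with the identification $\B(\ell_2) \weaktensor (\M \eHaag \M) \cong \CB^{\sigma}_{\M' \, \M'}(\B(\H), \B(\ell_2 \otimes_2 \H))$ is a detail the paper leaves to the surrounding discussion.
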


\begin{proof}
  Let $(y_\alpha)_\alpha,
  (x_\alpha)_\alpha \subset \Ball(\B(\ell_2) \weaktensor \M)$ be nets in
  the unit ball satisfying that $x_\alpha \to x$ and $y_\alpha \to y$ in
  the SOT. Since the SOT and $\sigma$-SOT topologies agree on
  bounded set we can assume that we have SOT convergence for any given
  representation of $\B(\ell_2) \weaktensor \M$ and in particular for its
  representation on the Hilbert-Schmidt operators $S_2(\ell_2 \otimes_2 \H)$.
  Again since the weak-$\ast$ topology and the pointwise weak-$\ast$ topology
  of $\CB^\sigma_{\M' \, \M'}(\B(\H),\B(\ell_2 \otimes_2 \H))$ agree on bounded
  sets it is enough to see that for any $S \in \B(\H)$ and
  $\xi \in S_1(\ell_2 \otimes_2 \H)$, $\langle (S \otimes \xi), x_\alpha
  \odot y_\alpha \rangle \to \langle (S \otimes \xi), x \odot y \rangle$. But
  using that $\langle (S \otimes \xi), x \odot y \rangle =
  \langle \xi, x \, (\1 \otimes S) \, y \rangle$ and expressing
  $\xi = \eta \, \zeta^\ast$, where $\eta, \zeta$ are Hilbert-Schmidt
  operators, gives $\langle (S \otimes \xi), x \odot y \rangle =
  \langle \eta, x \, (\1 \otimes S) \, y \, \zeta \rangle$, where the last
  paring is just the inner product of $S_2(\ell_2 \otimes_2 \H)$. Using the
  SOT-convergence of $x_\alpha$ and $y_\alpha$ gives
  \[
    \begin{array}{r>{\displaystyle}l}
            & |\langle \eta, x_\alpha \, (\1 \otimes S) \, y_\alpha - x \, (\1 \otimes S) \, y \, \zeta \rangle | \\
      \leq  & |\langle \eta, x_\alpha \, (\1 \otimes S) \, (y_\alpha - y) \, \zeta \rangle| + |\langle \eta, (x_\alpha - x) \, (\1 \otimes S) \, y \, \zeta \rangle|\\
      \leq  & \Big( \sup_\alpha \| (\1 \otimes S^\ast) \, x_\alpha^\ast \, \eta \| \Big) \, \| (y_\alpha - y) \, \zeta \| + \|\eta\| \, \| (x_\alpha - x) \, (\1 \otimes S) \, y \, \zeta \|\\
      \to   & 0,
    \end{array}
  \]
  and that concludes the proof.
  \end{proof}

\begin{lemma}
  \label{S2.lem.odot}
  Let $J \subset \M \eHaag \M$ be a
  $\sigma(\B(\H) \osprojtensor S_1(\H))$-closed left ideal, the
  following holds
  \begin{enumerate}[label=\emph{(\roman*)}, ref=(\roman*)]
    \item \label{S2.lem.odot.1} If $X, Y \in \B(\ell_2) \weaktensor \M$
    satisfy that $X \odot Y \in \B(\ell_2) \weaktensor J$ then
    $Z X \odot Y T \in \B(\ell_2) \weaktensor J$ for every
    $Z,T \in \B(\ell_2) \weaktensor \M$.
    \item \label{S2.lem.odot.2} $X \odot Y \in \B(\ell_2) \weaktensor J$ if
    and only if $[X^\ast] \odot [Y] \in \B(\ell_2) \weaktensor J$.
  \end{enumerate}
\end{lemma}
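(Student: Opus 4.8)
The plan is to establish \ref{S2.lem.odot.1} first, by a finite truncation reduction feeding into the continuity statement of Lemma \ref{S2.lem.cntodot}, and then to deduce \ref{S2.lem.odot.2} from it together with the polar decomposition inside the von Neumann algebra $\B(\ell_2) \weaktensor \M$. The conceptual core of \ref{S2.lem.odot.1} is a book-keeping identity for the $(i,j)$-slice of a $\odot$-product. Writing $X = [x_{ij}]$, $Y = [y_{ij}]$, $Z = [z_{ij}]$, $T = [t_{ij}]$ with entries in $\M$, one has at the level of slices
\[
  (Z X \odot Y T)_{i \, j} = \sum_{p, q} (z_{i \, p} \otimes t_{q \, j}) \, (X \odot Y)_{p \, q},
\]
where $z_{i \, p} \otimes t_{q \, j} \in \M \eHaag \M$ acts by the product $(a \otimes b)(u \otimes v) = a u \otimes v b$. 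This is precisely left multiplication in $\M \eHaag \M$, so since $J$ is a weak-$\ast$ closed left ideal and, by the easy direction of the slice maps, each $(X \odot Y)_{p \, q}$ lies in $J$, every summand lies in $J$. I would make the sum finite by truncating the \emph{multipliers}: letting $P_n$ be the projection onto the first $n$ coordinates of $\ell_2$ and putting $Z_n = (P_n \otimes \1) Z (P_n \otimes \1)$, $T_n = (P_n \otimes \1) T (P_n \otimes \1)$, the product $Z_n X \odot Y T_n$ is supported on $[n] \times [n]$ and each of its entries is a finite sum of elements of $J$. Hence $Z_n X \odot Y T_n$ is a finite matrix with entries in $J$, so it lies in $\B(\ell_2) \weaktensor J$ with no convergence issue.

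It then remains to pass to the limit. Since $P_n \otimes \1 \to \1$ strongly, one checks that $Z_n \to Z$ and $T_n \to T$ in the SOT while staying bounded, whence $Z_n X \to Z X$ and $Y T_n \to Y T$ in the SOT as well (right multiplication by the fixed $X$ and left multiplication by the fixed $Y$ are SOT-continuous), again norm-boundedly. Lemma \ref{S2.lem.cntodot} then gives $Z_n X \odot Y T_n \to Z X \odot Y T$ in the $\sigma(\B(\H) \osprojtensor S_1(\H))$ topology, and since $\B(\ell_2) \weaktensor J$ is weak-$\ast$ closed the limit $Z X \odot Y T$ lies in it, proving \ref{S2.lem.odot.1}.

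For \ref{S2.lem.odot.2}, the implication from $[X^\ast] \odot [Y]$ to $X \odot Y$ is immediate from \ref{S2.lem.odot.1}: multiplying on the left by $X$ and on the right by $Y$ turns $[X^\ast] \odot [Y]$ into $X [X^\ast] \odot [Y] Y = X \odot Y$, using that $[X^\ast]$ and $[Y]$ are the right and left support projections of $X$ and $Y$. For the converse I would use the polar decompositions $X = V |X|$ and $Y = |Y^\ast| W$ in $\B(\ell_2) \weaktensor \M$, so that $V^\ast V = [X^\ast]$ and $Y W^\ast = |Y^\ast|$ with $[Y]$ the support of $|Y^\ast|$. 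Setting $h_\epsilon(t) = \min\{ t^{-1}, \epsilon^{-1}\}$ and $Z_\epsilon = h_\epsilon(|X|) V^\ast$, $T_\epsilon = W^\ast h_\epsilon(|Y^\ast|)$, a short computation gives $Z_\epsilon X = g_\epsilon(|X|)$ and $Y T_\epsilon = g_\epsilon(|Y^\ast|)$ with $g_\epsilon(t) = \min\{1, t/\epsilon\}$; these are contractions in $\B(\ell_2) \weaktensor \M$ converging in the SOT to $[X^\ast]$ and $[Y]$ respectively as $\epsilon \to 0$. By \ref{S2.lem.odot.1} each $Z_\epsilon X \odot Y T_\epsilon$ lies in $\B(\ell_2) \weaktensor J$, and Lemma \ref{S2.lem.cntodot} together with weak-$\ast$ closedness yields $[X^\ast] \odot [Y] \in \B(\ell_2) \weaktensor J$ in the limit.

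The main obstacle is the slice identity underlying \ref{S2.lem.odot.1}: one must make the entry description rigorous and correctly match the left ideal structure of $J$ with the $(a \otimes b)(u \otimes v) = au \otimes vb$ action, keeping in mind that the right-hand operation $Y \mapsto Y T$ is a genuine right action on the second leg and is \emph{not} the algebra right multiplication in $\M \eHaag \M$. Truncating the multipliers $Z, T$ rather than the factors $X, Y$ is exactly what preserves the hypothesis $(X \odot Y)_{p \, q} \in J$ and renders every sum finite, so that only the easy direction of the slice maps is required and the harder weak-$\ast$ slice-map property can be bypassed entirely.
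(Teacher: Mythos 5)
Your proof is correct, and it shares the paper's skeleton --- do the algebra on finite pieces, then pass to the limit via Lemma \ref{S2.lem.cntodot} and the weak-$\ast$ closedness of $\B(\ell_2) \weaktensor J$ --- but the algebraic core differs in both parts. For \ref{S2.lem.odot.1}, the paper approximates $Z$ and $T$ by bounded SOT-convergent nets of sums of simple tensors and, for $Z = A \otimes x$, $T = B \otimes y$, invokes the identity $Z X \odot Y T = (A \otimes x \otimes y)\,(X \odot Y)\,(B \otimes \1)$, which requires knowing that $\B(\ell_2) \weaktensor J$ is a left ideal over $\B(\ell_2) \weaktensor (\M \eHaag \M)$ absorbing $\B(\ell_2) \otimes \1$ on the right; your truncation by $P_n \otimes \1$ together with the entrywise slice identity uses only the left-ideal property of $J$ itself and the easy direction of the slice maps, so it is marginally more self-contained (your truncations are in fact a special instance of the paper's approximating nets). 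For \ref{S2.lem.odot.2}, the paper sets $P = X^\ast X$, $Q = Y Y^\ast$, deduces from \ref{S2.lem.odot.1} that the powers $P^j \odot Q^k$ lie in $\B(\ell_2) \weaktensor J$, and approximates the support projections by polynomials $p_n$ converging pointwise and boundedly to the indicator of $(0,\infty)$; your route through the polar decompositions $X = V|X|$, $Y = |Y^\ast| W$ produces $g_\epsilon(|X|) \odot g_\epsilon(|Y^\ast|) = Z_\epsilon X \odot Y T_\epsilon$, which lies in $\B(\ell_2) \weaktensor J$ by a direct application of \ref{S2.lem.odot.1}, with no polynomial bookkeeping at all. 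That buys a small but genuine gain in rigor: the polynomial argument tacitly needs $p_n(0) = 0$ (otherwise constant terms produce uncontrolled summands such as $P^j \odot \1$), and the paper even writes the limit function as $\chi_{[0,\infty)}$, whose functional calculus on the positive operator $P$ is $\1$ rather than the support projection $[X^\ast]$; your explicit contractions $g_\epsilon(t) = \min\{1, t/\epsilon\}$ sidestep both issues.
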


\begin{proof}
  Since $J \subset \M \eHaag \M$ is a
  $\sigma(\B(\H) \osprojtensor S_1(\H))$-closed left ideal,
  $J' = \B(\ell_2) \weaktensor J$ is also a $\sigma(\B(\H) \osprojtensor
  S_1(\ell_2 \otimes_2 \H))$-closed left ideal.
  Furthermore, it satisfies that $J' (\B(\ell_2) \otimes \1) = J'$. For
  \ref{S2.lem.odot.1} just notice that if $Z = A \otimes x$,
  $T = B \otimes y$ are simple tensors, then
  \[
    Z X \odot Y T = (A \otimes x \otimes y) \, (X \odot Y) \, (B \otimes \1) \in J'
  \]
  Now, approximating $T$ and $Z$ by bounded, SOT-convergent nets of sums of
  simple tensor and applying \ref{S2.lem.cntodot} we obtain
  \ref{S2.lem.odot.1}.
  
  For \ref{S2.lem.odot.2} notice that if $[X^*] \odot [Y] \in J'$ then,
  by \ref{S2.lem.odot.1}, $X \, [X^*] \odot [Y] \, Y = X \odot Y \in J'$. For
  the other implication we just use functional calculus. Indeed, if
  $X \odot Y \in J'$ then $X^* X \odot Y Y^\ast \in J'$. Let us denote by
  $P = X^* X$ and $Q = Y Y^\ast$ and let $p_n(r)$ be a family of polynomials
  converging pointwise and boundedly to $\chi_{[0,\infty)}(r)$. Then,
  since all of the powers $P^n \odot Q^n$ lie in $J'$ we have that
  $p_n(P) \odot p_n(Q) \in J'$. Since $p_n(P) \to \chi_{[0,\infty)}(P) =
  [X^*]$ and $p_n(Q) \to \chi_{[0,\infty)}(Q) = [Y]$ in the SOT, we
  obtain the claim.
\end{proof}

We can now prove the main theorem of the section.

\begin{theorem}
  \label{S2.thm.Corr}
  Let $\M \subset \B(\H)$ be a von Neumann algebra. The maps
  \begin{equation*}
    \xymatrix{
      \left\{
        \substack{
          \R \subset \Prj(\B(\ell_2) \weaktensor \M)^2 \, : \\
          \R \mbox{ is an i.q.r.}
        }
      \right\}
      \ar@/_2pc/[d]_{J_{\R}}
      \ar@<-1pc>@/_5pc/[dd]_{\V_\R}
      \\
      \left\{
        \substack{
          J \subset \M \eHaag \M \, : \\
          (\M \eHaag \M) \, J \subset J,\\
          \sigma(\B \osprojtensor S_1)\mbox{-closed}
        }
      \right\}
      \ar@/_2pc/[d]_{\V_{J}}
      \ar@/_2pc/[u]^{\R_J}
      \\
      \left\{
        \substack{
          \V \subset \B(\H) \, : \\
          \M' \, \V \, \M' \subset \V, \mbox{ weak-}\ast \mbox{closed}
        }
      \right\}
      \ar@/_2pc/[u]^{J_\V}
      \ar@<-1pc>@/_5pc/[uu]_{\R_\V}
    }
  \end{equation*}
  given by
  \begin{eqnarray*}
    J_{\V}     & = & \{ s \in \M \eHaag \M : \Phi_s(T) = 0, \forall T \in  \V \} \\ 
    \R_\V      & = & \{ (P,Q) \in \Prj(\B(\ell_2) \weaktensor \M)^2 : \exists T \in \V, P (T \otimes \1) Q \neq 0 \} \\
    \V_J       & = & \{ T \in \B(\H) : \Phi_s(T) = 0, \forall s \in J \} \\
    \R_J       & = & \{ (P,Q) \in \Prj(\B(\ell_2) \weaktensor \M)^2 : P \odot Q \not\in \B(\ell_2) \weaktensor J \}\\
    J_\R       & = & \overline{ \{ (\phi \otimes \Id)(X \odot Y) : ([X^*],[Y]) \in \R, \phi \in \B(\ell_2)_\ast \}^{\mathrm{w}\ast} }\\
    \V_\R      & = & \{ T \in \B(\ell_2) : P ( T \otimes \1 ) Q = 0, \forall (P,Q) \notin \R \},
  \end{eqnarray*}
  are well defined, bijective and inverse of each other, i.e.
  \begin{multicols}{3}
    \begin{enumerate}[label=\emph{(\roman*)}, ref={(\roman*)}]
      \item \label{S2.thm.Corr.1} $\V_{J_\V} = \V$
      \item \label{S2.thm.Corr.2} $J_{\V_J} = J$
      \item \label{S2.thm.Corr.3} $\R_{J_\R} = \R$
      \item \label{S2.thm.Corr.4} $J_{\R_J} = J$
      \item \label{S2.thm.Corr.5} $\R_{\V_\R} = \R$
      \item \label{S2.thm.Corr.6} $\V_{\R_\V} = \V$
    \end{enumerate}
  \end{multicols}
  Furthermore, the rest of the maps commute, giving
  \begin{multicols}{3}
    \begin{enumerate}[label=\emph{(\arabic*)}, ref={(\arabic*)}]
      \item \label{S2.thm.Corr.C1} $\R_{J_\V} = \R_\V$
      \item \label{S2.thm.Corr.C2} $J_{\V_\R} = J_\R$
      \item \label{S2.thm.Corr.C3} $\V_{\R_J} = \V_J$
      \item \label{S2.thm.Corr.C4} $J_{\R_\V} = J_\V$
      \item \label{S2.thm.Corr.C5} $\R_{\V_J} = \R_J$
      \item \label{S2.thm.Corr.C6} $\V_{J_\R} = \V_\R$.
    \end{enumerate}
  \end{multicols}
\end{theorem}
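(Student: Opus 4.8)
The plan is to anchor everything on the identification of Theorem \ref{S1.thm.HaagToCB}\ref{S1.thm.HaagtoCB.2}, under which $\M \eHaag \M$ is the algebra $\CB^\sigma_{\M' \, \M'}(\B(\H))$ and $\Phi_s$ is a normal $\M'$-bimodular map. In this picture $J_\V$ is the set of such maps vanishing on $\V$ and $\V_J = \bigcap_{s \in J} \ker \Phi_s$, which makes well-definedness immediate: $J_\V$ is a left ideal because $\Phi$ is multiplicative, so $\Phi_{rs}(T) = \Phi_r(\Phi_s(T)) = 0$ on $\V$ whenever $s \in J_\V$, and it is $\sigma(\B(\H) \osprojtensor S_1(\H))$-closed because each $s \mapsto \langle \xi, \Phi_s(T) \rangle$ is weak-$\ast$ continuous by the continuity clause of Theorem \ref{S1.thm.HaagToCB}; dually $\V_J$ is a weak-$\ast$ closed $\M'$-bimodule. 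I would then record the purely formal observation that, since \cite[Theorem 2.32]{Wea2012} already gives that $\V \mapsto \R_\V$ and $\R \mapsto \V_\R$ are mutually inverse (identities \ref{S2.thm.Corr.5}--\ref{S2.thm.Corr.6}), it suffices to prove the double-annihilator identities \ref{S2.thm.Corr.1}--\ref{S2.thm.Corr.2} together with the two linking identities \ref{S2.thm.Corr.C1} and \ref{S2.thm.Corr.C4}: composing these with Weaver's bijection forces \ref{S2.thm.Corr.3}, \ref{S2.thm.Corr.4} and \ref{S2.thm.Corr.C2}, \ref{S2.thm.Corr.C3}, \ref{S2.thm.Corr.C5}, \ref{S2.thm.Corr.C6} by a diagram chase.

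The technical engine will be a slice identity: writing $\Phi_S$ for the element of $\CB^\sigma_{\M' \, \M'}(\B(\H),\B(\ell_2 \otimes_2 \H))$ attached to $S \in \B(\ell_2) \weaktensor (\M \eHaag \M)$, so that $\Phi_{X \odot Y}(T) = X(T \otimes \1)Y$, one has
\[
  \Phi_S|_\V = 0 \iff S \in \B(\ell_2) \weaktensor J_\V .
\]
The forward implication holds because $\Phi_{(\phi \otimes \Id)(S)}(T) = (\phi \otimes \Id)(\Phi_S(T))$, so every left slice $(\phi \otimes \Id)(S)$ annihilates $\V$ and hence lies in the weak-$\ast$ closed space $J_\V$; since $\B(\ell_2)$ is injective the slice-map property then places $S$ in $\B(\ell_2) \weaktensor J_\V$, and the converse is trivial. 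Specialising to $S = P \odot Q$ and using $\Phi_{P \odot Q}(T) = P(T \otimes \1)Q$ identifies the defining condition of $\R_\V$ with that of $\R_{J_\V}$, which is exactly \ref{S2.thm.Corr.C1}. Combining the same identity with the factorisation $s = X \odot Y$ of an element of $\M \eHaag \M$ (Theorem \ref{S1.thm.HaagToCB}) and with Lemma \ref{S2.lem.odot}, one shows that each $s \in J_\V$ is the slice of an element $X \odot Y \in \B(\ell_2) \weaktensor J_\V$ and, conversely, that every such slice lies in $J_\V$; matching this reconstruction with the definition of $J_\R$ at $\R = \R_\V$ gives \ref{S2.thm.Corr.C4}. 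The easy halves $\V \subseteq \V_{J_\V}$ and $J \subseteq J_{\V_J}$ are immediate.

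For the remaining inclusions I would argue separately. For $\V_{J_\V} \subseteq \V$ (identity \ref{S2.thm.Corr.1}), take $T \notin \V = \V_{\R_\V}$; Weaver's description yields $(P,Q) \notin \R_\V$ with $P(T \otimes \1)Q \neq 0$, the slice identity gives $P \odot Q \in \B(\ell_2) \weaktensor J_\V$, and slicing by a suitable $\phi$ extracts a genuine $s = (\phi \otimes \Id)(P \odot Q) \in J_\V$ with $\Phi_s(T) \neq 0$, so $T \notin \V_{J_\V}$. The genuinely hard statement is the surjectivity identity \ref{S2.thm.Corr.2}, $J_{\V_J} \subseteq J$, which asserts that every $\sigma(\B(\H) \osprojtensor S_1(\H))$-closed left ideal is the annihilator of its own common kernel. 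Here the slice identity alone only returns $J_{\V_J}$, so instead I would separate $s \notin J$ by Hahn--Banach and Lemma \ref{S1.lem.dual}\ref{S1.lem.dual.1}, obtaining a functional $t \mapsto \langle C, (\Id \otimes \Phi_t)(B) \rangle$ that vanishes on $J$ but not at $s$, and then exploit the \emph{left-ideal} property through $\Phi_{rt} = \Phi_r \Phi_t$: vanishing on all $rt$, $r \in \M \eHaag \M$, forces $(\Id \otimes \Phi_t)(B)$, after the appropriate compression, into the reflexive hull cut out by $\V_J$, from which one manufactures an operator $T \in \V_J$ with $\Phi_s(T) \neq 0$.

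I expect this reflexivity step to be the main obstacle: the left-ideal hypothesis has to be converted into an operator-reflexivity statement for $\V_J$ after amplification — it is precisely because $\1 \otimes \V_J$ becomes operator reflexive that the projection-pair bookkeeping of $\R_J$ is available — and one must track which predual is in force, passing between the $\sigma(\M_\ast \Haag \M_\ast)$ and $\sigma(\M_\ast \eHaag \M_\ast)$ topologies of Lemma \ref{S1.lem.dual} when invoking the slice-map property. Once \ref{S2.thm.Corr.1}, \ref{S2.thm.Corr.2}, \ref{S2.thm.Corr.C1} and \ref{S2.thm.Corr.C4} are in hand, the diagram chase of the first paragraph closes the argument: \ref{S2.thm.Corr.C4} evaluated at $\V_\R$ together with $\R_{\V_\R} = \R$ yields \ref{S2.thm.Corr.C2}, while \ref{S2.thm.Corr.C1} evaluated at $\V_J$ together with $J_{\V_J} = J$ yields \ref{S2.thm.Corr.C5}, and composing the two bijections then gives \ref{S2.thm.Corr.3}--\ref{S2.thm.Corr.4} and the remaining commutations.
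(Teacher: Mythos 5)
Your global organization is valid and is in fact leaner than the paper's. Proving \ref{S2.thm.Corr.1}, \ref{S2.thm.Corr.2}, \ref{S2.thm.Corr.C1}, \ref{S2.thm.Corr.C4}, with \ref{S2.thm.Corr.5}--\ref{S2.thm.Corr.6} quoted from Weaver, does recover everything by your chase: \ref{S2.thm.Corr.C2} is \ref{S2.thm.Corr.C4} at $\V = \V_\R$ plus \ref{S2.thm.Corr.5}; \ref{S2.thm.Corr.C5} is \ref{S2.thm.Corr.C1} at $\V = \V_J$ plus \ref{S2.thm.Corr.2}; then $\R_{J_\R} = \R_{J_{\V_\R}} = \R_{\V_\R} = \R$, $J_{\R_J} = J_{\R_{\V_J}} = J_{\V_J} = J$, and \ref{S2.thm.Corr.C3}, \ref{S2.thm.Corr.C6} follow the same way. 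As a bonus, the chase yields the well-definedness of $J \mapsto \R_J$ and $\R \mapsto J_\R$ for free, via $\R_J = \R_{\V_J}$ and $J_\R = J_{\V_\R}$, whereas the paper checks these by hand (linearity of $J_\R$ via orthogonal isometries and \cite[Lemma 2.29]{KuWea2012}, weak-$\ast$ openness of $\R_J$ via Lemma \ref{S2.lem.cntodot}). Your arguments for \ref{S2.thm.Corr.1}, \ref{S2.thm.Corr.C1}, \ref{S2.thm.Corr.C4} are correct and are essentially the paper's, rearranged: the paper proves \ref{S2.thm.Corr.1} without invoking \ref{S2.thm.Corr.6} (it reruns the Hahn--Banach/amplification argument directly) and proves \ref{S2.thm.Corr.C2} instead of \ref{S2.thm.Corr.C4}, but the content coincides. (Both you and the paper's proofs implicitly use the corrected definition of $J_\R$, with $([X^*],[Y]) \notin \R$; the ``$\in$'' in the theorem's display is a typo.) One caveat: do not justify the slice identity by ``injectivity of $\B(\ell_2)$''; Kraus-type property $S_\sigma$ statements live in the ordinary $\sigma$-weak setting, whereas membership in $\B(\ell_2) \weaktensor J_\V$ here refers to the $\sigma(\B(\H) \osprojtensor S_1(\ell_2 \otimes_2 \H))$ topology. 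The clean argument is elementary: the compressions $(P_n \otimes \1)\, S \,(P_n \otimes \1)$ are finite matrices whose entries are slices of $S$, hence lie in $M_n(J_\V)$; they are uniformly bounded and converge to $S$ pointwise weak-$\ast$, and on bounded sets this topology agrees with $\sigma(\B \osprojtensor S_1)$.

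The genuine gap is \ref{S2.thm.Corr.2}, the heart of the theorem, where your sketch stops exactly where the work begins. Hahn--Banach and Lemma \ref{S1.lem.dual}\ref{S1.lem.dual.1} give a functional $t \mapsto \langle C, (\Id \otimes \Phi_t)(B) \rangle$ vanishing on $J$ and not at $s_0$; but you then assert that the ideal property ``forces $(\Id \otimes \Phi_t)(B)$, after the appropriate compression, into the reflexive hull cut out by $\V_J$'', without exhibiting any compression --- and operator reflexivity is not the mechanism here. The missing construction is the passage to the Hilbert--Schmidt space: factor $C = C_1 C_2^*$ with $C_1, C_2 \in S_2(\ell_2 \otimes_2 \H)$ and rewrite the pairing as the $S_2$-inner product $\langle C_1, (\Id \otimes \Phi_t)(B)\, C_2 \rangle$, where $(\Id \otimes \Phi_t)(B)$ acts by left multiplication on $S_2(\ell_2 \otimes_2 \H)$. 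The left-ideal property, applied to $(x \otimes y)\,t$ with $x, y \in \M$, says precisely that this vanishing persists when $C_1, C_2$ are replaced by $(\1 \otimes x^*) C_1$ and $(\1 \otimes y) C_2$. Letting $P_1, P_2$ be the projections onto $\overline{(\1 \otimes \M) C_1}$ and $\overline{(\1 \otimes \M) C_2}$, which lie in the commutant of the copy of $\M$ through which $\Id \otimes \Phi_t$ acts, one gets $P_1 (\Id \otimes \Phi_t)(B) P_2 = (\Id \otimes \Phi_t)(P_1 B P_2) = 0$ for every $t \in J$. Consequently every slice $T_\xi = (\xi \otimes \Id)(P_1 B P_2)$ lies in $\V_J$, and some $T_{\xi_0}$ must satisfy $\Phi_{s_0}(T_{\xi_0}) \neq 0$: otherwise $(\Id \otimes \Phi_{s_0})(P_1 B P_2) = 0$, contradicting $\langle \phi, s_0 \rangle \neq 0$ because $C_1, C_2$ lie in the ranges of $P_1, P_2$. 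It is this conversion of the ideal hypothesis into an invariant-subspace statement inside $S_2(\ell_2 \otimes_2 \H)$ that makes \ref{S2.thm.Corr.2} work; without it, your plan for this point is a restatement of the goal rather than a proof.
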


\begin{proof}
  The fact that $\R_\V$ and $\V_\R$ are intrinsic
  quantum relations and weak-$\ast$ closed $\M'$-bimodules is trivial. Points
  \ref{S2.thm.Corr.5} and \ref{S2.thm.Corr.6} are the content of
  \cite[Theorem 2.32]{Wea2012}. We shall prove only the rest of the points.
  
  {{\bf Proof of \ref{S2.thm.Corr.1}.}}
  $\V_J$ is a weak-$\ast$ closed $\M'$-bimodule since it is the
  intersection of $\{ T \in \B(\H) : \Phi_s(T) = 0\}$ for every $s \in J$ and
  each of these subspaces is weak-$\ast$ closed and $\M'$-bimodular. It is also
  clear that $\V \subset \V_{J_\V}$,
  we only need to prove the converse. Let $T \not\in \V$. Since
  $\V \subset \B(\H)$
  is weak-$\ast$ closed there is, by the Hahn-Banach Theorem, a weak-$\ast$
  continuous functional $\phi : \B(\H) \to \CC$ such that $\phi(S) = 0,
  \forall S \in \V$ but $\phi(T) \neq 0$. Any such functional is of the form
  $\phi( A ) = \langle \eta, (\1 \otimes A) \xi \rangle$, where
  $\eta, \xi \in \ell_2 \otimes_2 \H$. Since $\V$ is an $\M'$-bimodule we
  have that
  \[
    \langle (\1 \otimes x) \, \eta, (\1 \otimes S) \, (\1 \otimes y) \, \xi \rangle = 0,
  \]
  where $S \in \V$ and $x,y \in \M'$. Let $P$ and $Q$ be the orthogonal
  projections onto the subspaces of $\ell_2 \otimes_2 \H$ given by
  \[
    \begin{array}{ccc}
      H_1 = \overline{(\1 \otimes \M') \, \eta},
      & &
      H_2 = \overline{(\1 \otimes \M') \, \xi}.
    \end{array}
  \]
  These subspaces are $(\1 \otimes \M')$-invariant, therefore
  $P, Q \in (\1 \otimes \M)' = \B(\ell_2) \weaktensor \M$. Clearly we have
  $P (\1 \otimes \V) Q = \{0\}$ but $P (\1 \otimes T) Q \neq 0$. Let us
  write $P = [p_{i j}]_{i,j}$ and $Q = [q_{i j}]_{i j}$, where
  $p_{i j}, q_{i j} \in \M$. Notice that:
  \[
    P (\1 \otimes T) Q = \left[ \Phi_{r_{i j}} (T) \right]_{i j},
  \]
  where
  \[
    r_{i j} = \sum_{k = 1}^\infty p_{i k} \otimes q_{k j} \in \M \eHaag \M.
  \]
  Since $P (\1 \otimes T) Q \neq 0$ there are $i, j$ such that
  $\Phi_{r_{i j}} (T) \neq 0$ but $r_{i j} \in J_{\V}$ which implies that
  $T \not\in \V_{J_{\V}}$ and so $\V_{J_\V} \subset \V$, which concludes
  \ref{S2.thm.Corr.1}.
  
  {{\bf Proof of \ref{S2.thm.Corr.2}.}}
  First, let us see that $J_\V$ is
  $\sigma(\B(\H) \osprojtensor S_1(\H))$-closed. Observe that $\Phi_s(T) = 0$
  iff $\langle \xi, \Phi_s(T) \rangle = 0$ for every $\xi \in \B(H)_\ast$.
  Therefore
  \[
    \{ s \in \M \eHaag \M : \Phi_s(T) = 0 \} = \bigcap_{\xi \in S_1(\H)} \{ s \in \M \eHaag \M : \langle \xi, \Phi_s(T) \rangle = 0 \},
  \]
  and so, the left hand side is pointwise weak-$\ast$ closed. Since the
  $\sigma(\B(\H) \osprojtensor S_1(\H))$ topology is finer than the
  pointwise weak-$\ast$ topology of $\CB_{\M' \, \M'}^\sigma(\B(\H))$ we
  have that $\{ s \in J : \Phi_s {|}_{\V} = 0\}$ is a weak-$\ast$ closed
  subspace. The fact that it is a left ideal follows trivially from the
  definition.
  
  Let $J$ be a $\sigma(\B(\H) \osprojtensor S_1(\H))$-closed left ideal.
  Again, it is clear that $J \subset J_{\V_J}$ we only have to prove the other
  containment. That is equivalent to prove that for every $s_0 \not\in J$ there
  is $T \in \B(H)$ such that $\Phi_s(T) = 0$ for every $s \in J$ and
  $\Phi_{s_0} \neq 0$. By weak-$\ast$ closeness of $J$ and the Hahn-Banach
  theorem there is a weak-$\ast$ continuous functional
  $\phi \in (\M \eHaag \M)_\ast$ such that $\langle \phi, s \rangle = 0$ for
  every $s \in J$ but $\langle \phi, s_0 \rangle \neq 0$. By Lemma
  \ref{S1.lem.dual} we have that 
  \[
    \langle \phi, s \rangle = \big\langle C, (\Id \otimes \Phi_s)(B) \big\rangle,
  \]
  where $C \in S_1(\ell_2 \otimes_2 H)$ and $B \in \B(\ell_2 \otimes_2 H)$. We
  can decompose $C = C_1 \, C_2^*$ where $C_1,C_2 \in S_2(\ell_2 \otimes_2 H)$
  and so
  \[
    \langle \phi, s \rangle = \big\langle C_1, (\Id \otimes \Phi_s)(B) \, C_2 \big\rangle
  \] 
  where $(\Id \otimes \Phi_s)(B) \in \B(\ell_2 \otimes_2 H)$ is
  acting on $S_2(\ell_2 \otimes_2 H)$ by left multiplication and the duality
  pairing is that of $S_2$ with itself. We have that, for every $s \in J$,
  $\langle \phi, s \rangle = 0$, and so, since $J$ is an ideal,
  $\langle \phi, (x \otimes y) \, s \rangle = 0$. Therefore
  \begin{equation}
    \label{S2.thm.Corr.eq1}
    0 = \big\langle (\1 \otimes x^*) \, C_1 , (\Id \otimes \Phi_s)(B) \, (\1 \otimes y) \, C_2 \big\rangle.
  \end{equation}
  Let us define the closed subspaces
  $H_1, H_2 \subset S_2(\ell_2 \otimes_2 H)$ given by
  \[  
    \begin{array}{rcl}
      H_1 =  \overline{ (1 \otimes \M) \, C_1}, & & H_2 =  \overline{ (1 \otimes \M) \, C_2}
    \end{array}
  \]
  and let $P_i : S_2(\ell_2 \otimes_2 H) \to H_i$, for $i \in \{1,2\}$, be
  their orthogonal projections. We can identify isometrically
  $S_2(\ell_2 \otimes_2 H)
  \cong \ell_2 \otimes_2 H \otimes_2 \ell_2 \otimes_2 H$,
  such identification gives that $\B(S_2(\ell_2 \otimes_2 H))
  \cong \B(\ell_2) \weaktensor \B(H) \weaktensor \B(\ell_2) \weaktensor \B(H)$,
  where the first two components correspond to right multiplication and the
  other two correspond to left multiplication. Since $H_1$ and $H_1$ are
  $\CC \1 \otimes \CC \1 \otimes \CC \1 \otimes \M$-invariant
  the projections $P_1$, $P_2$ belong to to
  $(\CC \1 \otimes \CC \1 \otimes \CC \1 \otimes \M)' =
  \B(\ell_2) \weaktensor \B(H) \weaktensor \B(\ell_2) \weaktensor \M'$. Now,
  the identity \ref{S2.thm.Corr.eq1} implies that
  \[
    0 = P_1 \, (\Id \otimes \Phi_s)(B) \, P_2,
  \]
  where $(\Id \otimes \Phi_s)(B)$ is seen as an operator in
  $\B(\ell_2) \weaktensor \B(H) \otimes \CC \1 \otimes \CC \1$. If
  $s = \sum_{k} x_k \otimes y_k$ we have that
  \begin{eqnarray*}
      P_1 \, (\Id \otimes \Phi_s)(B) \, P_2 & = & \sum_{k} P_1 \, (\1 \otimes \1 \otimes \1 \otimes x_k ) \, B \, (\1 \otimes \1 \otimes \1 \otimes y_k ) \, P_2\\
                                            & = & \sum_{k} (\1 \otimes \1 \otimes \1 \otimes x_k ) \, ( P_1 \, B \, P_2 ) \, (\1 \otimes \1 \otimes \1 \otimes y_k )\\
                                            & = & (\Id_{\B(\ell_2 \otimes_2 H \otimes_2 \ell_2)} \otimes \Phi_s) (P_1 \, B \, P_2).
  \end{eqnarray*}
  Let $T_\xi \in \B(H)$ be the operator given by
  $(\xi \otimes \Id_{\B(H)})(P_1 \, B \, P_1) \in \B(H)$, where
  $\xi \in \B(\ell_2 \otimes_2 H \otimes_2 \ell_2)_\ast$. We have that
  $\Phi_s(T_\xi) = 0$ for every $s \in J$ since
  \begin{eqnarray*}
    \Phi_s(T_\xi) & = & (\xi \otimes \Phi_s)(P_1 \, B \, P_1)\\
                  & = & \langle \xi, (\Id_{\B(\ell_2 \otimes_2 H \otimes_2 \ell_2)} \otimes \Phi_s)(P_1 \, B \, P_2) \rangle\\
                  & = & 0.
  \end{eqnarray*}
  But there has to be a
  $\xi_0 \in \B(\ell_2 \otimes_2 H \otimes_2 \ell_2)_\ast$ such that
  $\Phi_{s_0}(T_{\xi_0}) \neq 0$, otherwise
  \[
    \langle \xi, P_1 ( \Id_{\B(\ell_2 \otimes_2 H \otimes_2 \ell_2)} \otimes \Phi_{s_0} ) (B) P_2 \rangle = 0,
  \]
  for every $\xi \in \B(\ell_2 \otimes_2 H \otimes_2 \ell_2)_\ast$ which
  implies that $P_1 ( \Id \otimes \Phi_{s_0} ) (B) P_2 = 0$ but that is
  impossible since $C_1$ and $C_2$ are in the ranges of $P_1$ and $P_2$
  respectively. The existence of such $T_{\xi_0}$ finishes the proof.
 
  {{\bf Proof of \ref{S2.thm.Corr.3}.}}
  We will start proving that $\R_J$ is an intrinsic quantum relation. First, we
  have to see that $\R_J$ is weak-$\ast$ open. Since $J$ is
  $\sigma(\B(\H) \osprojtensor S_1(\H))$-closed, so is
  $\B(\ell_2) \weaktensor J$. The complementary
  $(\B(\ell_2) \weaktensor J)^{\mathrm{c}}$ is weak-$\ast$ open and so is
  $\R_J$, since it is the reverse image of
  $(\B(\ell_2) \weaktensor J)^{\mathrm{c}}$
  under the function $\odot: \Prj(\B(\ell_2) \weaktensor \M) \times
  \Prj(\B(\ell_2) \weaktensor \M) \to \B(\ell_2) \weaktensor (\M \eHaag \M)$,
  which is weak-$\ast$ continuous by Lemma \ref{S2.lem.odot} (recall that over
  $\Prj(\B(\ell_2) \weaktensor \M)$ the SOT, WOT, $\sigma$-SOT and
  $\sigma$-WOT coincide). Second, we are
  going to prove the properties \ref{S1.def.IQR.1}, \ref{S1.def.IQR.2},
  \ref{S1.def.IQR.2} in Definition \ref{S1.def.IQR}. It is trivial that
  $(0,0) \not\in \R_{J}$. For \ref{S1.def.IQR.2} we have to prove that
  \[
    \forall \alpha, \beta \, , (P_\alpha,Q_\beta) \in \B(\ell_2) \weaktensor J \, \Longleftrightarrow \big( \bigvee_{\alpha} P_\alpha, \bigvee_{\beta} Q_\beta \big) \in \B(\ell_2) \weaktensor J.
  \]
  For the implication ($\Longrightarrow$) we use that if
  $(P_\alpha,Q_\beta) \in \B(\ell_2) \weaktensor J$ then
  \[
    \big( \sum_{\alpha} P_\alpha, \sum_{\beta} Q_\beta \big) \in \B(\ell_2) \weaktensor J,
  \]
  but using that, for any family of projections $(R_\gamma)_\gamma$ 
  \[
    \big[ \sum_{\gamma} R_\gamma \big] = \bigvee_{\gamma} R_\gamma
  \]
  and Lemma \ref{S2.lem.odot} \ref{S2.lem.odot.2} we obtain that
  \[
    \big( \bigvee_{\alpha} P_\alpha, \bigvee_{\beta} Q_\beta \big) = \big( \big[ \sum_{\alpha} P_\alpha \big], \big[ \sum_{\beta} Q_\beta \big] \big) \in \B(\ell_2) \weaktensor J.
  \]
  Proving ($\Longleftarrow$) is clearly equivalent to proving that
  $P \odot Q \in \B(\ell_2) \weaktensor J$ implies that $R \odot S \in
  \B(\ell_2) \weaktensor J$ for any projections $R \leq P$ and $S \leq Q$, but
  that follows trivially from Lemma \ref{S2.lem.odot} \ref{S2.lem.odot.1}.
  For point \ref{S1.def.IQR.4} we have that if $P \odot [BQ] \in \B(\ell_2)
  \weaktensor J$ then $P \odot BQ \in \B(\ell_2) \weaktensor J$ by
  Lemma \ref{S2.lem.odot}. Since $B \in \B(\ell_2) \otimes \CC \1$ we have that
  $P \odot BQ = PB \odot Q \in \B(\ell_2) \weaktensor J$, again by Lemma
  \ref{S2.lem.odot}, that implies that $[B^\ast P] \odot Q \in \B(\ell_2)
  \weaktensor J$. The other implication is proved similarly. 
 
  In order to prove the inclusion $\R_{J_\R} \subset \R$ start by
  noticing that:
  \begin{eqnarray*}
    \B(\ell_2) \weaktensor J_\R & = & \B(\ell_2) \weaktensor \overline{ \{ (\phi \otimes \Id)( X \odot Y ) : \phi \in \B(\ell_2)_\ast, ([X^*],[Y]) \not\in \R \}^{\wast}}\\
                                & = & \wstspan \{ X \odot Y : ([X^\ast], [Y]) \not\in \R \}.
  \end{eqnarray*}
  If we assume that
  $P \odot Q \not\in \wstspan \{ X \odot Y : ([X^\ast], [Y]) \not\in \R \}$
  then trivially we have that $(P,Q) \in \R$. For the other inclusion we shall
  use that, by \ref{S2.thm.Corr.5}, $(P,Q) \in \R$ iff $(P,Q) \in \R_{\V_\R}$
  which happens only when $P \, (\1 \otimes A) \, Q \neq 0$ for some
  $A \in \V_\R$. Since the complete isometry
  $\Id \otimes \Phi : \B(\ell_2) \weaktensor (\M \eHaag \M)
  \to \CB^{\sigma}_{\M' \, \M'}(\B(\H),\B(\ell_2 \otimes_2 \H))$
  satisfies that $(\Id \otimes \Phi)_{X \odot Y}(A) = X \, (\1 \otimes A) \, Y$
  we have that
  \begin{eqnarray*}
    \wstspan \{ X \odot Y : ([X^\ast], [Y]) \not\in \R \} & =       & \wstspan \{ X \odot Y : [X^*] \, (\1 \otimes \V_\R) \, [Y] = \{0\} \}\\
                                                          & =       & \wstspan \{ X \odot Y : (\Id \otimes \Phi)_{[X^*] \odot [Y]}{|}_{\V_R} = 0 \}\\
                                                          & \subset & \{ s : (\Id \otimes \Phi)_s{|}_{\V_\R} = 0 \}.
  \end{eqnarray*}
  But no pair $(P,Q) \in \R$ satisfies that $P \odot Q \in
  \{ X \odot Y : ([X^\ast], [Y]) \not\in \R \}$ since that will imply that
  $(\Id \otimes \Phi)_{P \odot Q}{|}_{\V_\R} = 0$ and that is a contradiction.
  
  {{\bf Proof of \ref{S2.thm.Corr.4}.}}
  Let us see that $J_\R$ is an ideal for every intrinsic quantum relation
  $\R$ . To see that it is a
  linear subspace fix $\phi_1, \phi_2 \in \B(\ell_2)_\ast$ and $([X_1^*], [Y_1]) \not\in \R$,
  $([X_2^*], [Y_2]) \not\in \R$. If $B_1 : \ell_2 \to \ell_2$
  and $B_2 : \ell_2 \to \ell_2$ are isometries
  whose ranges are orthogonal and complementary. We have that the operators
  \begin{eqnarray*}
    X & = & (B_1 \otimes \1) \, X_1 \, (B_1^* \otimes \1) + (B_2 \otimes \1) \, X_2 \, (B_1^* \otimes \1)\\
    Y & = & (B_1 \otimes \1) \, Y_1 \, (B_1^* \otimes \1) + (B_2 \otimes \1) \, Y_2 \, (B_1^* \otimes \1)
  \end{eqnarray*}
  satisfy $[X^*] = (B_1 \otimes \1) \, [X_1^*] \, (B_1^* \otimes \1) +
  (B_2^* \otimes \1) \, [X_2^*] \, (B_1^* \otimes \1)$, $[Y] =
  (B_1 \otimes \1) \, [Y_1] \, (B_1^* \otimes \1) +
  (B_2^* \otimes \1) \, [Y_2] \, (B_1^* \otimes \1)$  and therefore,
  by \cite[Lemma 2.29]{KuWea2012}, $([X^*],[Y]) \in \R$. Now, a trivial
  calculation gives that
  \[
    \begin{array}{rl}
        & (\phi_1 \otimes \Id)(X_1 \odot Y_1) + (\phi_2 \otimes \Id)(X_2 \odot Y_2)\\
      = & \big( (B_1 \, \phi_1 \, B_1^* + B_2 \, \phi_2 \, B_2^*) \otimes \Id \big)(X \odot Y),
    \end{array}
  \]
  where $B_i \, \phi_j \, B_i^*(x) = \phi_j(B_i^* \, x \, B_i)$. 
  The fact that $J_\R$ is closed by scalar
  multiplication is trivial. It is also
  $\sigma(\B(\H) \osprojtensor S_1(\H))$-closed by construction.
  It only rest to see that it is absorbent for the multiplication.
  It is enough to prove that $(z \otimes t) \, J_\R \subset J_\R$ for every
  $z, t \in \M$. We have that
  \[
    (z \otimes t) \, (\phi \otimes \Id) (X \odot Y) = (\phi \otimes \Id)((\1 \otimes z) \, X \odot Y \, (\1 \otimes t)).
  \]
  Now, using that $[Y \, (\1 \otimes t)] \leq [Y]$ and
  $[X^* \, (\1 \otimes z^*)] \leq [X^*]$ and applying point \ref{S1.def.IQR.2}
  in Definition \ref{S1.def.IQR} gives the desired result.
  
  The inclusion $J_{\R_J} \subset J$ is easy to prove. Recall that if 
  $s \in \B(\ell_2) \weaktensor J$ then $(\phi \otimes \Id)(s) \in J$. Using
  that together with Lemma \ref{S2.lem.odot}\ref{S2.lem.odot.2} gives
  \[
    \begin{array}{rl}
                & \overline{\{(\phi \otimes \Id)(X \odot Y) : ([X^*], [Y]) \not\in \R_J, \phi \in \B(\ell_2)_\ast \}^{\wast}}\\
      =         & \overline{\{(\phi \otimes \Id)(X \odot Y) : [X^*] \odot [Y] \in \B(\ell_2) \weaktensor J, \phi \in \B(\ell_2)_\ast \}^{\wast}}\\
      \subset   & J.\\
    \end{array}
  \]
  For the reciprocal inclusion $J \subset J_{\R_J}$ we need to see that if
  $s \in J$ then there are $\phi \in \B(\ell_2)_\ast$, 
  $X, Y \in \B(\ell_2) \weaktensor \M$ with $([X^\ast],[Y]) \not\in \R_J$ such
  that $s = (\phi \otimes \Id)(X \odot Y)$. Note that we can express
  \[
    s = \sum_{k = 0}^\infty x_k \otimes y_k = (\omega_{e_1,e_1} \otimes \Id)(X \odot Y),
  \]
  as
  \[
    \begin{array}{>{\displaystyle}l>{\displaystyle}c>{\displaystyle}r}
      X = \sum_{k = 0}^\infty x_k \otimes e_{1 \, k} \in \B(\ell_2) \weaktensor \M
      & \mathrm{and} &
      Y = \sum_{k = 0}^\infty y_k \otimes e_{k \, 1} \in \B(\ell_2) \weaktensor \M.
    \end{array}
  \]
  We only have to prove that $([X^*], [Y]) \not\in \R_J$, i.e. that
  $[X^*] \odot [Y] \in \B(\ell_2) \weaktensor J$. Again, by Lemma
  \ref{S2.lem.odot}\ref{S2.lem.odot.2}, we only have to see that
  $X \odot Y \in \B(\ell_2) \weaktensor J$, which is equivalent to
  see that for every $\phi \in \B(\ell_2)_\ast$,
  $(\phi \otimes \Id)(X \odot Y) \in J$. Notice that if $P$ is the
  projection on the $1$-dimensional subspace spanned by $e_1$, then
  $X \odot Y = (P \otimes \1) \, (X \odot Y) \, (P \otimes \1)$. Therefore
  $(\phi \otimes \Id)(X \odot Y) = (P \, \phi \, P \otimes \Id)(X \odot Y)
  = (\lambda \, \omega_{e_1, e_1} \otimes \Id)(X \odot Y) = \lambda \, s \in J$,
  for some $\lambda \in \CC$. That finishes the proof of
  \ref{S2.thm.Corr.4}.

  Since we have already proved \ref{S2.thm.Corr.1}-\ref{S2.thm.Corr.6} we have
  that \ref{S2.thm.Corr.C4}-\ref{S2.thm.Corr.C6} can be deduced from
  \ref{S2.thm.Corr.C1}-\ref{S2.thm.Corr.C3}. We will prove only those first three
  cases, which are easy after the previous results.
  
  {{\bf Proof of \ref{S2.thm.Corr.C1}.}}
  We have that
  \begin{equation*}
    \R_{J_{\V}} = \{ (P,Q) \in \Prj(\B(\ell_2) \weaktensor \M)^2 : P \odot Q \not\in \B(\ell_2) \weaktensor J_\V \}
  \end{equation*}
  and that
  \[
    \B(\ell_2) \weaktensor J_\V = \{ s \in \B(\ell_2) \weaktensor (\M \eHaag \M) : (\Id \otimes \Phi)_s {|}_\V = 0 \},
  \]
  therefore
  \[
    \begin{array}{rclcl}
      \R_{J_{\V}} & = & \{ (P,Q) \in \Prj(\B(\ell_2) \weaktensor \M)^2 : (\Id \otimes \Phi)_{P \odot Q} {|}_\V \neq 0 \} &   & \\
                  & = & \{ (P,Q) \in \Prj(\B(\ell_2) \weaktensor \M)^2 : P (\1 \otimes \V) Q \neq \{0\} \}               & = & \R_\V.
    \end{array}
  \]
  
  {{\bf Proof of \ref{S2.thm.Corr.C2}.}}
  Let us start by seeing that $J_\R \subset J_{\V_\R}$. If
  $z = (\phi \otimes \Id)(X \odot Y)$, with $([X^*], [Y]) \not\in \R$, then
  $(\Id \otimes \Phi)_{z}(T) = (\phi \otimes \Id)(X \, (\1 \otimes T) \, Y)
  = (\phi \otimes \Id)(X \, [X^*] \, (\1 \otimes T) \, [Y^*] \, Y)$. So if
  $T \in \V_\R$ then $(\Id \otimes \Phi)_{z}(T) = 0$. For the converse
  inclusion let $s \in {J_{\V_\R}}$ and express $s$ as
  $s = (\omega_{e_1, e_1} \otimes \Id)(X \odot Y)$ like in the proof of
  \ref{S2.thm.Corr.4}. We have that $X \, (\1 \otimes \V_\R) \, Y  = 0$ and so
  $[X^*] \, (\1 \otimes \V_\R) \, [Y]  = 0$ which implies that
  $([X^*],[Y]) \not\in \R_{\V_\R} = \R$ and so $s \in J_\R$.
  
  {{\bf Proof of \ref{S2.thm.Corr.C3}.}}
  The inclusion $\V_J \subset \V_{\R_J}$ is trivial. In order to prove the
  converse, $\V_{\R_J} \subset \V_J$, fix $S \in \V_{\R_J}$. We have that
  $X \, (\1 \otimes S) \, Y = 0$, $\forall \, ([X^*], [Y])$ such that
  $[X^*] \odot [Y] \in \B(\ell_2) \weaktensor J$. Then, for any
  $\phi \in \B(\ell_2)_\ast$ we have that
  \[
    0 = (\phi \otimes \Id)(X \, (\1 \otimes S) \, Y)
      = (\phi \otimes \Id)((\Id \otimes \Phi)_{X \odot Y}(S))
      = \Phi_{(\phi \otimes \Id)(X \odot Y)} (S).
  \]
  Therefore $\Phi_{z}(S) = 0$ for every $z \in J_{\R_J} = J$.
\end{proof}

% What was done by Weaver
Recall that the technique of the proof of point \ref{S2.thm.Corr.1} follows
exactly the same lines of \cite[Lemma 2.8]{Wea2012}.

\begin{remark}
  Observe that, a priori, it is not clear why all
  $\sigma(\B(\H) \osprojtensor S_1(\H))$-closed ideals are closed in the
  coarser pointwise weak-$\ast$ topology. Such result is obtained as a
  consequence from Theorem \ref{S2.thm.Corr}.\ref{S2.thm.Corr.2}.
\end{remark}

\section{{\bf Invariant Quantum Relations \label{S5}}}

Let $\A$ be a von Neumann algebraic \emph{quantum group} with
comultiplication $\Delta$, see \cite{Daele2014} for a precise definition,
we will say that $\M$ is a \emph{quantum homogeneous space} if there
is a normal, $\ast$-homomorphism $\sigma: \M \to \A \weaktensor \M$, called
the \emph{coaction}, satisfying the natural coassociativity identity
\[
  (\Id \otimes \sigma) \, \sigma = (\Delta \otimes \Id) \, \sigma
\]
If $\M \subset \B(\H)$ is an \emph{standard form} for the von Neumann
algebra $\M$, we have that, after endowing $\H$ with its row (resp. column)
operator space structure, the coaction $\sigma$ extends to a complete
isometry $\sigma_2 : \H^\row \to \A \weaktensor H^\row$. We will say that an
operator $T \in \B(\H)$ is \emph{$\sigma$-equivariant} iff
\[
  \sigma_2 \, T = (\Id \otimes T) \, \sigma_2
\]
and we will denote by $\B(\H)^\sigma$ the space of $\sigma$-equivariat
operators. Similarly, we say that a quantum relation $\V$ over $\M$ is
\emph{$\sigma$-invariant}, or simply \emph{invariant} if the coaction
is understood from the context, iff
it is generated (as an operator $\M'$-bimodule) by $\sigma$-equivariant
operators. If $\V$ is generated by equivariant operators, then it is
generated by the equivariant operators inside $\V$, therefore $\V$ is
invariant iff 
\[
  \V = \leftidx{_{\M'}}{\left\langle \V \cap \B(\H)^\sigma \right\rangle }_{\M'}
     = \wstspan \{ x \, T \, y : x, y \in \M', \quad T \in \V \cap \B(\H)^\sigma \}.
\]
From now on we will denote $\V \cap \B(\H)^\sigma$ by $\V^\sigma$. Our purpose in
this section is to study invariant quantum relations. Interesting examples of
quantum homogeneous spaces include, among others, the ones listed bellow.

\begin{description}[leftmargin=0cm]
  \item[Classical homogeneous spaces] \label{S5.des.Cls} Let $G$ is a locally
  compact Hausdorff group and $X$ be a measurable $G$-space. $L_\infty(G)$
  is clearly a quantum group with the comultiplication given by 
  $\Delta(f)(g,h) = f(g \, h)$. Similarly, we can define the coaction
  $\sigma : L_\infty(X) \to L_\infty(G) \weaktensor  L_\infty(X)$ given by
  $\sigma(f)(g,x) = f(g \, x)$. To solidify our intuition let us see what
  happens when $X$ is discrete. In that case quantum relations over
  $L_\infty(X)$ are just subsets $R \subset X \times X$. Recall that a
  classical relation $R \subset X \times X$ is $G$-invariant iff
  \begin{equation}
    \label{S5.eq.defEq}
    (x,y) \in R \Longleftrightarrow (g \, x, g \, y) \in R, \mbox{  } \forall g \in G.
  \end{equation}
  We are going to see that such relations correspond with
  $\sigma$-invariant quantum relations. An operator 
  $T = [a_{x \, y}]_{x,y \in X} \in \B(L_2 X)$ is $\sigma$-equivariant iff it
  commutes with the action $\sigma_g(f)(x) = f(g^{-1} \, x)$, therefore the set
  \[
    R_T = \{ (x,y) \in X \times X : \langle e_x, T e_y \rangle \neq 0\} \subset X \times X
  \]
  satisfies \eqref{S5.eq.defEq} and the same goes for $R_\V$, where
  $\V = _{\R G}\langle \V^\sigma \rangle_{\R G}$, since
  \[
    R_\V = \bigcup_{T \in \V^\sigma} \{ (x,y) \in X \times X : \langle e_x, T e_y \rangle \neq 0 \}.
  \]
  This proves that any $\sigma$-invariant quantum relation over a discrete
  space $X$ corresponds to an invariant relation $R \subset X \times X$. The
  reciprocal is shown similarly.
  
  \item[Group von Neumann algebras] \label{S5.des.GvN} Let $G$ be a locally
  compact Hausdorff group, $L_2(G)$ the $L_2$-space with respect to the
  left Haar measure and $\lambda: G \to \mathcal{U}(L_2 G)$ be the unitary
  representation given by $\lambda_g(\xi)(h) = \xi(g^{-1} \, h)$, where
  $\xi \in L_2(G)$. The (left) group von Neumann algebra $\L G$ is given by
  \[
    \L G = \{ \lambda_g : g \in G \}'' \subset \B(L_2 G).
  \]
  The natural comultiplication structure $\Delta: \L G \to
  \L G \eHaag \L G$ is given by $\lambda_g \mapsto
  \lambda_g \otimes \lambda_g$. In this case the commutant $\L G'$ is given
  by the (right) group von Neumann algebra 
  \[
    \R G = \{ \rho_g : g \in G \}'' \subset \B(L_2 G)
  \]
  where $\rho_g(\xi)(h) = \xi(h \, g) \, \Delta(g)^{\frac{1}{2}}$, where
  $\xi \in L_2(G)$ is right regular
  representation. We can considere $\L G$ a quantum homogeneous space over
  itself with the multiplication as coaction. The representation
  $\L G \subset \B(L_2 G)$ is standard, and the $\Delta$-equivariant operators
  are given by the subalgebra $L_\infty(G)$ acting by multiplication operation.
  It is also illustrative to observe that if we take the GNS representation
  associated with the canonical Plancherel weight $\varphi$, see \cite{Ped1979},
  $\L G \subset \B(L_2(\L G, \varphi))$, then an element
  $T: L_2(\L G,\varphi) \to L_2(\L G, \varphi)$ is $\Delta$-equivariant iff it
  is a noncommutative Fourier multiplier over $L_2(\L G)$, in the sense of
  \cite[3.7]{CasSall2015}. By the Plancherel theorem, the algebra of such
  multipliers is equivalent to $L_\infty(G)$.
  
  \item[Quantum Torii] \label{S5.des.QTo} One family of von Neumann algebras
  that has received a considerable amount of attention is that of 
  \emph{quantum torii} $\A_\theta^n \subset L_2(\TT^n)$. In such case the
  coaction is given by $\sigma: \A_\theta^n \to
  L_\infty(\TT^n) \weaktensor \A_\theta^n$. Quantum
  relations on quantum torii have been considered before in
  \cite[Section 2.7]{Wea2012}.
\end{description}

Here, we will mainly focus our attention on the case of $\M = \L G$. Our
purpose is to describe the ideals associated with invariant quantum
relations over $\L G$. For that, we need to recall some results on the
representation of completely bounded $\R G$-bimodular operators
preserving the $\Delta$-equivariant operators. Let $M G$ be the Banach
algebra of finite measures with the o.s.s. given by $C_0(G)^* = M G$. Apart
from the weak-$\ast$ topology given by $\sigma(C_0 G)$ in $M G$ we have the
strictly finer topology generated by evaluation against every bounded
continuous function $\sigma(C_b G)$. Reasoning like before, since $M G$ is
$\sigma(C_b G)$-closed, the $\sigma(C_b G)$ topology induces another predual
for $M G$.
The subalgebra of point measures $\ell_1(G) \subset M G$ is of course
$\sigma(C_b G)$-dense. We define a multiplicative and injective map
$j : \ell_1(G) \to \L G \eHaag \L G$ by
$\delta_g \mapsto \lambda_g \otimes \lambda_{g^{-1}}$. The following theorem
assure that there is an injective and weak-$\ast$ continuous extension to $M G$
and characterizes its range as normal $\R G$-bimodular, c.b. maps preserving
$\B(L_2 G)^\Delta = L_\infty(G)$. We will denote the algebra of such operators
by $\CB_{\R G \mbox{-} \R G}^{\sigma, L_\infty(G)}(\B(L_2 G)) \subset
\CB_{\R G \mbox{-} \R G}^{\sigma}(\B(L_2 G))$. Such algebra is closed,
with respect to the natural weak-$\ast$ topologies of
$\CB_{\R G \mbox{-} \R G}^{\sigma}(\B(L_2 G))$ and so it inherits both the
$\sigma(\B(L_2 G) \osprojtensor S_1(L_2 G))$ and the
$\sigma(\K(L_2 G) \osprojtensor S_1(L_2 G))$ topologies. 

\begin{theorem}{{\bf (\emph{\cite[Theorem 3.2]{NeuRuanSpronk2008}})}}
  \label{S5.thm.NRS}
  Let $G$ be a locally compact group. There is a $\sigma(C_b)$ to
  $\sigma(\B \osprojtensor S_1)$ continuous,
  multiplicative and injective complete isometry
  $j: M G \to \L G \eHaag \L G$ extending the map
  $\delta_g \mapsto \lambda_g \otimes \lambda_{g^{-1}}$. Furthermore,
  the following diagram commute
  \begin{equation*}
    \label{S5.thm.NRS.dia}
    \xymatrix{
      M G \, \ar[d]^-{\Theta} \ar@{^{(}->}[rr]^j                                             & & \L G \eHaag \L G \ar[d]^-{\Phi}\\
      \CB_{\R G \mbox{-} \R G}^{\sigma, L_\infty(G)}(\B(\H)) \, \ar@{^{(}->}[rr]^{\subseteq} & & \CB_{\R G \mbox{-} \R G}^{\sigma}(\B(\H))
    }
  \end{equation*}
  
  In particular,
  $\Theta: MG \to \CB^{\sigma}_{\R G \mbox{-} \R G}(\B(L_2 G))$ is a
  complete isometry whose range is 
  $\CB^{\sigma, L_\infty(G)}_{\R G \mbox{-} \R G}(\B(L_2 G))$. The topology
  induced by $\sigma(\K \osprojtensor S_1)$ in $M G$ is the just
  $\sigma(C_0)$, while the topology induced by $\sigma(\B \osprojtensor S_1)$
  is $\sigma(C_b)$.
\end{theorem}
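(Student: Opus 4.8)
The plan is to route everything through the Haagerup identification of Theorem \ref{S1.thm.HaagToCB}. Since $\M = \L G$ has commutant $\M' = \R G$, part \ref{S1.thm.HaagtoCB.2} supplies a surjective complete isometry $\Phi : \L G \eHaag \L G \to \CB^\sigma_{\R G \mbox{-} \R G}(\B(L_2 G))$. I would define $\Theta := \Phi \circ j$, so that the commuting diagram holds by construction once $j$ is built and its image lands in the $L_\infty(G)$-preserving subalgebra; the complete isometry of $\Theta$ then follows from that of $j$ and $\Phi$. Thus the theorem splits into three tasks: (a) extend the generator assignment $\delta_g \mapsto \lambda_g \otimes \lambda_{g^{-1}}$ to a weak-$\ast$ continuous complete isometry $j$ on $M G$; (b) verify multiplicativity, injectivity and the $\dagger$-structure; (c) identify the range of $\Phi \circ j$ as exactly $\CB^{\sigma, L_\infty(G)}_{\R G \mbox{-} \R G}(\B(L_2 G))$.

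For the generators, I would first record that $\Phi_{j(\delta_g)}$ is the conjugation $T \mapsto \lambda_g \, T \, \lambda_{g^{-1}}$. Since $\lambda_g \in \L G = (\R G)'$, this map is $\R G$-bimodular, normal, and a complete isometry of $\B(L_2 G)$, and a direct computation gives $\lambda_g M_f \lambda_{g^{-1}} = M_{L_g f}$, so it preserves $L_\infty(G)$ and restricts there to left translation $L_g$. Multiplicativity on point masses is immediate from the product rule $(x \otimes y)(z \otimes t) = xz \otimes ty$ in $\L G \eHaag \L G$, which yields $j(\delta_g) j(\delta_h) = \lambda_{gh} \otimes \lambda_{(gh)^{-1}}$, matching convolution; likewise $j(\delta_g)^\dagger = \lambda_{g^{-1}} \otimes \lambda_g$ matches the involution on $M G$. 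Extending linearly to the $\sigma(C_b G)$-dense subalgebra $\ell_1(G)$, the estimate $\| j(\mu) \|_{\eHaag} \leq \| \mu \|_{M G}$ for finitely supported $\mu$ follows by writing $j(\mu)$ as a single $\odot$-product of a row and a column indexed by the support; the reverse inequality, hence the complete isometry, is obtained by pairing $j(\mu)$ against normal functionals built from vector states of $L_2(G)$. Weak-$\ast$ continuity from $\sigma(C_b G)$ to $\sigma(\B(L_2 G) \osprojtensor S_1(L_2 G))$ is then checked on the dense subalgebra and extended, giving the continuous injective extension $j$ on all of $M G$.

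The substantive point is task (c), and it is where I expect the main difficulty. One inclusion is free: each $\Phi_{j(\delta_g)}$ preserves $L_\infty(G)$, and this passes to weak-$\ast$ limits. For the converse I would exploit that $\B(L_2 G)$ is generated as a von Neumann algebra by $L_\infty(G)$ and $\R G$, since $\{ L_\infty(G) \cup \R G \}' = L_\infty(G) \cap \L G = \CC \1$. Consequently any normal $\R G$-bimodular $\Psi$ is determined by $\Psi|_{L_\infty(G)}$ via $\Psi(r_1 M_f r_2) = r_1 \, \Psi(M_f) \, r_2$ and normality, so restriction to $L_\infty(G)$ is injective on $\CB^\sigma_{\R G \mbox{-} \R G}(\B(L_2 G))$. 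Feeding $r = \rho_g \in \R G$ into the bimodularity relation and using $\rho_g M_f \rho_g^{-1} = M_{R_g f}$ shows that an $L_\infty(G)$-preserving $\Psi$ restricts to a normal, completely bounded map on $L_\infty(G)$ commuting with right translations. The classical multiplier theorem identifies such a map with convolution by a unique $\mu \in M G$; comparing with $\Theta(\mu)|_{L_\infty(G)}$, which commutes with right translations and integrates left translations against $\mu$, gives $\Psi|_{L_\infty(G)} = \Theta(\mu)|_{L_\infty(G)}$, whence $\Psi = \Theta(\mu)$ by injectivity of restriction.

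Finally, the two predual topologies are matched by computing $\langle \Theta(\mu), T \otimes \xi \rangle = \int_G \langle \xi, \lambda_g T \lambda_{g^{-1}} \rangle \, d\mu(g)$: the coefficient function $g \mapsto \langle \xi, \lambda_g T \lambda_{g^{-1}} \rangle$ lies in $C_0(G)$ when $T \in \K(L_2 G)$ and only in $C_b(G)$ for general $T \in \B(L_2 G)$, which is precisely the assertion that $\sigma(\K \osprojtensor S_1)$ induces $\sigma(C_0)$ and $\sigma(\B \osprojtensor S_1)$ induces $\sigma(C_b)$ on $M G$. The hardest steps are the completely isometric lower bound for $j$ and the multiplier identification underlying surjectivity; both are exactly where the structure of $M G$, rather than a smaller algebra, enters, and they parallel the argument of \cite{NeuRuanSpronk2008}.
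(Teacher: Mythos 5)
Your proposal is correct in outline, and its central component --- identifying the range of $\Theta$ --- is exactly the paper's argument: restrict a normal, $\R G$-bimodular, $L_\infty(G)$-preserving map $\Psi$ to $L_\infty(G)$, feed $\rho_g$ into bimodularity to see that the restriction commutes with right translations, invoke Wendel's theorem to produce $\mu \in M G$, and conclude $\Psi = \Theta_\mu$ because a normal $\R G$-bimodular map is determined by its restriction to $L_\infty(G)$. (You obtain the generation fact $\{L_\infty(G) \cup \R G\}'' = \B(L_2 G)$ from the commutant computation $L_\infty(G) \cap \L G = \CC \1$, the paper from Takai--Takesaki duality $L_\infty(G) \rtimes_r G = \B(L_2 G)$; same content, and your version is arguably more elementary.) Where you genuinely diverge is in constructing $j$/$\Theta$ and proving the isometry. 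The paper never extends from $\ell_1(G)$: given any $\mu \in M G$ it forms $T_\mu : f \mapsto \mu \ast f$ on $L_\infty(G)$, gets $\| T_\mu \|_{\cb} = \| T_\mu \| = \| \mu \|_{M G}$ for free because the range is abelian, and defines $\Theta_\mu = T_\mu \rtimes \Id$, using the embedding $\iota : L_\infty(G) \rtimes_r G \to L_\infty(G) \weaktensor \B(L_2 G)$ to obtain $\| T_\mu \rtimes \Id \|_{\cb} = \| T_\mu \otimes \Id \|_{\cb} = \| T_\mu \|_{\cb}$; this yields well-definedness on all of $M G$, the isometric lower and upper bounds, and the cb-norm control in a single stroke. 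Your route (row--column factorization over the support for the upper bound on $\ell_1(G)$, vector-state pairings for the lower bound, then weak-$\ast$ extension) is more hands-on and avoids crossed-product machinery, at the cost of the extension and matrix-level issues below.

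Two soft spots need repair. First, ``weak-$\ast$ continuity is checked on the dense subalgebra and extended'' is not a valid mechanism as stated: $\ell_1(G)$ is only $\sigma(C_b G)$-dense in $M G$, and weak-$\ast$ continuity on a weak-$\ast$ dense subspace does not extend automatically. The fix is already implicit in your last paragraph: the pairing formula $\langle \Theta_\mu, T \otimes \xi \rangle = \int_G \langle \xi, \lambda_g T \lambda_{g^{-1}} \rangle \, d\mu(g)$, together with your observation that the integrand lies in $C_b(G)$ for $T \in \B(L_2 G)$ and in $C_0(G)$ for $T \in \K(L_2 G)$, \emph{defines} $\Theta$ (equivalently $j$) on all of $M G$ as the restriction of the adjoint of a map $\B(L_2 G) \osprojtensor S_1(L_2 G) \to C_b(G)$; definition-by-pre-adjoint (or, alternatively, bounded nets from $\Ball(\ell_1(G))$, which is $\sigma(C_b G)$-dense in $\Ball(M G)$, plus weak-$\ast$ compactness of $\Ball(\L G \eHaag \L G)$) should replace extension-by-density. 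Second, ``the reverse inequality, hence the complete isometry'' is a gloss: a scalar isometric lower bound does not give a complete isometry, and your vector-state argument lives at the first matrix level only. The paper's route again gets this essentially for free, since maps into the abelian algebra $L_\infty(G)$ satisfy $\| \cdot \|_{\cb} = \| \cdot \|$ and $\rtimes \, \Id$ preserves cb-norms; if you keep your route, both bounds must be run on $M_n(M G) = M_n(C_0(G)^\ast)$ for all $n$.
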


We will, perhaps ambiguously, denote by $\Theta$ either the map
$\Theta:M G \to \CB^{\sigma}_{\R G \mbox{-} \R G}(\B(L_2 G))$ or the
restriction to its image.

We will briefly sketch the proof of the theorem above since some of
its ideas will be used in the forthcoming results. But, before that, we
need to recall a few well known facts on the theory of crossed products.
Let $r: G \to \Aut(L_\infty G)$ be the normal right-translation action
given by $r_g(f)(x) = f(x \, g)$ noticing that by the Takai-Takesaki
duality theorem, see \cite{Tak1973Duality}, we have
\[
  L_\infty(G) \rtimes_{r} G = \B(L_2 G),
\]
where $\rtimes$ is notation for the (weak-$\ast$ closed) spatial crossed
product. The action $r$ is spatially implemented on
$L_\infty(G) \subset \B(L_2 G)$ by the right regular representation,
i.e. $r_g(f) = \rho_g \, f \, \rho_{g^{-1}}$ and so we obtain that
\[
  L_\infty(G) \rtimes_{r} G = \{L_\infty(G),\R G\}'' = \B(L_2 G).
\]
As a consequence, we can identify $L_\infty(G) \rtimes \1 \subset
L_\infty(G) \rtimes_r G = \B(L_2 G)$ with the algebra
of $\Delta$-equivariant operators.

\begin{proof}
  First, we are going to see that the map
  $\Theta: M G \to \CB_{\R G \mbox{-} \R G}^\sigma(\B(L_2 G))$ is surjective.
  Observe that $\Theta_\mu$ acts on $L_\infty(G) \subset \B(L_2 G)$ by left
  convolution, i.e. $\Theta_\mu(f) = \mu \ast f$. Notice also that, if
  $\Psi: \B(L_2 G) \to \B(L_2 G)$ is a normal and $\R G$-bimodular map,
  its restriction $\Psi{|}_{L_\infty(G)} : L_\infty(G) \to L_\infty(G)$
  determines the map $\Psi$ since $L_\infty(G)$ and $\R G$ generate the
  whole von Neumann algebra $\B(L_2 G)$ by the Takai-Takesaki theorem.
  Furthermore, since $\Psi$ preserves $L_\infty(G)$, we have that,
  for every $f \in L_\infty(G)$
  \[
    \rho_g \, \Psi(f) = \Psi(\rho_g \, f) = \Psi(\rho_g \, f \rho_{g^{-1}} \, \rho_g ) = \Psi(r_g \, f) \, \rho_g
  \]
  and so $\Psi{|}_{L_\infty (G)}$ is a right-translation equivariant
  operator, i.e. $r_g \, \Psi = \Psi \, r_g$. But then, any such operator is
  actually given by left convolution with a finite measure, see \cite{Wen1952}.
  So $\Psi(f) = \mu \ast f = \Theta_\mu(f)$ and since $\Psi$ and $\Theta$
  coincide in $L_\infty(G)$ they are equal.
  
  Reciprocally, if we pick a measure $\mu \in M G$ we have that the map
  $T_\mu: L_\infty(G) \to L_\infty(G)$ given by $f \mapsto \mu \ast f$
  is a normal bounded operator commuting with $r_g$ for all $g \in G$.
  Since $L_\infty(G)$ is an abelian operator space we have that $T_\mu$
  is c.b. and that
  \[
    \| T_\mu \|_\cb = \| \mu \|_{M G}.
  \]
  But for any crossed product there is a normal injective
  $\ast$-homomorphism $\iota : L_\infty(G) \rtimes G \to
  L_\infty(G) \weaktensor \B(L_2 G) \subset \B(L_2 G \otimes_2 L_2 G)$. To
  define such embedding $\iota$ fix an element $\xi \in L_2(G \times G)$. 
  The action of $\iota(f \, \rho_{g_0})$ on $\xi$ is given by
  \[
    j(f \, \rho_{g_0}) \, \xi (g,h) = f(g \, h^{-1} \, g_0^{-1}) \, \xi(g, g_{0}^{-1} \, h).
  \]
  while for general $x \in L_\infty(G) \rtimes_r G$ we just extend linearly
  and take weak-$\ast$ limits. Such embedding appears naturally in the crossed
  product construction. It satisfies that, if $T$ is equivariant, then
  \[
    \xymatrix{
      L_\infty(G) \rtimes G \, \ar@{^{(}->}[rr]^-{\iota} \ar[d]^-{T \rtimes \Id} & & L_\infty(G) \weaktensor \B(L_2 G) \ar[d]^-{T \otimes \Id}\\
      L_\infty(G) \rtimes G \, \ar@{^{(}->}[rr]^-{\iota}                         & & L_\infty(G) \weaktensor \B(L_2 G)    
    }.
  \]
  As a consequence, if $T$ is completely bounded so is $T \rtimes \Id$ and
  \[
    \| T \rtimes \Id \|_\cb = \| T \otimes \Id \|_\cb
  \]
  After identifying $L_\infty(G) \rtimes_r G$ with $\B(L_2 G)$, we get that
  $T_\mu \rtimes \Id$ is the only normal and $\R G$-bimodular extension of
  $T_\mu$. Therefore $\Theta_\mu = T_\mu \rtimes \Id$ and so $\Theta$ is
  well defined and isometric. Since $\Theta$ clearly factors through
  $\L G \eHaag \L G$ we also obtain that $j$ is a complete isometry.
\end{proof}

The result above goes back to Wendel \cite{Wen1952} but the formulation is
taken from \cite{NeuRuanSpronk2008}, whose main contribution is to generalize
the result from $\L G$ to its quantum group dual $L_\infty(G)$ obtaining a
complete isomorphism $\hat{\Theta}: M_\cb A G \to
\CB_{L_\infty(G) \, \mbox{-} L_\infty(G)}^{\sigma, \L G}(\B(L_2 G))$, where
$M_\cb A G$ is the space of completely bounded multipliers of the Fourier 
algebra $A G$. It is also worth pointing out that both results can be unified
using the language of quantum groups, see \cite{JunNeuRu2009}. 

The main result of this section is that invariant quantum relations over
$\L G$ are in bijective correspondence with weak-$\ast$ closed left
ideals inside $M G$.

\begin{theorem}
  \label{S5.thm.CorrInv}
  Let $\L G \subset \B(\H)$ be as above. If $\V$ is an invariant quantum
  relation over $\L G$ and $Q \subset M G$ is a $\sigma(C_b G)$-closed left
  ideal, then the following maps
  \begin{enumerate}[label=\emph{(\arabic*)}, ref={(\arabic*)}]
    \item $Q_\V = \{ \mu \in M G : \Theta_\mu{|}_\V = 0 \}$,
    \item $\V_Q = \{ T \in \B(\H) : \Theta_\mu(T) = 0, \quad \forall \mu \in Q \}$,
  \end{enumerate}
  are bijective and inverse of each other, see diagram below.
  \begin{equation*}
    \xymatrix{
      \left\{
        \substack{
          \V \subset \B(\H) \, : \\
          \V, \mbox{ invariant}
        }
      \right\}
      \ar@/_2pc/[rr]^-{Q_\V}
      &
      &
      \left\{
        \substack{
          Q \subset M G \, : (M G) \, Q \subset Q, \\
          \sigma(C_b G)\mbox{-closed}
        }
      \right\}
      \ar@/_2pc/[ll]_-{\V_Q}
    }
  \end{equation*}
\end{theorem}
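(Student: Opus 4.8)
The plan is to reduce the statement to the general correspondence of Theorem \ref{S2.thm.Corr} composed with the embedding $j$ of Theorem \ref{S5.thm.NRS}. Since $\Theta_\mu = \Phi_{j(\mu)}$, the two maps of the statement factor as
\[
  Q_\V = j^{-1}(J_\V), \qquad \V_Q = \V_J,
\]
where $J_\V, \V_J$ are the maps of Theorem \ref{S2.thm.Corr} and $J$ is the $\sigma(\B \osprojtensor S_1)$-closed left ideal of $\L G \eHaag \L G$ generated by $j(Q)$ (that $\V_J = \V_Q$ holds because $\{s : \Phi_s(T)=0\}$ is a $\sigma(\B\osprojtensor S_1)$-closed left ideal for each fixed $T$). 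The well-definedness of $Q_\V$ is then immediate: $j$ is multiplicative and $\sigma(C_b G)$-to-$\sigma(\B \osprojtensor S_1)$ continuous by Theorem \ref{S5.thm.NRS}, and $J_\V$ is a $\sigma(\B \osprojtensor S_1)$-closed left ideal by Theorem \ref{S2.thm.Corr}, so its preimage $Q_\V$ is a $\sigma(C_b G)$-closed left ideal in $M G$.

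Next I would cut everything down to the $\Delta$-equivariant part. Each $\Theta_\mu$ is normal and $\R G$-bimodular, the $\Delta$-equivariant operators are exactly $\B(\H)^\sigma = L_\infty(G)$, and on $L_\infty(G)$ the map $\Theta_\mu$ acts by left convolution $f \mapsto \mu \ast f$. For an invariant $\V$ we have $\V = \wstspan\{x\,T\,y : x,y \in \R G,\ T \in \V^\sigma\}$, so by bimodularity and normality $\Theta_\mu|_\V = 0$ if and only if $\Theta_\mu|_{\V^\sigma}=0$, i.e. $\mu \ast W = 0$ where $W := \V^\sigma \subseteq L_\infty(G)$. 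Using $r_g(f) = \rho_g\, f\, \rho_{g^{-1}}$ one checks that $W$ is a weak-$\ast$ closed, right-translation invariant subspace of $L_\infty(G)$. Thus $Q_\V$ depends only on $W$, through $Q_\V = \{\mu : \mu \ast W = 0\}$, and dually $\V_Q \cap L_\infty(G) = \{f : \mu \ast f = 0\ \forall \mu \in Q\} =: W_Q$.

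The theorem then follows by composing two bijections. First, $\V \mapsto \V^\sigma$ should be a bijection between invariant quantum relations over $\L G$ and weak-$\ast$ closed right-translation invariant subspaces of $L_\infty(G)$, with inverse $W \mapsto \wstspan(\R G\, W\, \R G)$. Second, a classical Wendel-type duality should identify such subspaces $W$ with $\sigma(C_b G)$-closed left ideals $Q$ of $M G$ via $W \mapsto \{\mu : \mu \ast W = 0\}$ and $Q \mapsto W_Q$, the two being inverse by a bipolar argument in the $\sigma(C_b G)$ duality (the topology whose predual role for $M G$ is recorded before Theorem \ref{S5.thm.NRS}). Assembling these, $\V_{Q_\V}$ is the invariant relation generated by $W_{Q_{\V^\sigma}} = \V^\sigma$, hence equals $\V$, while $Q_{\V_Q} = Q_{W_Q} = Q$.

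The hard part will be the first of these two bijections, specifically the inclusion $\V_Q \subseteq \wstspan(\R G\, W_Q\, \R G)$, i.e. that the abstract kernel $\V_Q$ is already generated by its equivariant part. This is where the crossed-product description $\B(L_2 G) = L_\infty(G) \rtimes_r G$ from the proof of Theorem \ref{S5.thm.NRS} is essential: I would decompose $T \in \B(L_2 G)$ into its $L_\infty(G)$-valued coefficients along the right regular representation, verify that $\Theta_\mu$ acts coefficientwise by $\mu \ast (-)$ (so that $\Theta_\mu(T)=0$ for all $\mu \in Q$ forces every coefficient into $W_Q$), and identify $\wstspan(\R G\, W\, \R G)$ with the operators all of whose coefficients lie in $W$. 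The delicate point is that for non-discrete $G$ there is no normal conditional expectation onto the masa $L_\infty(G)$, so this coefficient extraction must be carried out through the dual-coaction/Takai--Takesaki structure rather than a naive slice; this is the technical heart. A secondary, purely harmonic-analytic obstacle is the topological bookkeeping matching $\sigma(C_b G)$-closed left ideals of $M G$ with weak-$\ast$ closed invariant subspaces of $L_\infty(G)$ under the convolution bipolar.
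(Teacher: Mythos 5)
Your outline reconstructs the paper's architecture almost exactly. The paper also (a) reduces everything to the equivariant part $\V^\Delta = \V \cap L_\infty(G)$, on which $\Theta_\mu$ acts by left convolution, using normality and $\R G$-bimodularity of $\Theta_\mu$ together with invariance of $\V$; (b) proves your ``second bijection,'' the Wendel-type bipolar duality $Q_{\V^\Delta_Q} = Q$ and $\V^\Delta_{Q_{\V^\Delta}} = \V^\Delta$, by Hahn--Banach in the $\sigma(C_b G)$ pairing plus translation invariance (this is Lemma \ref{S5.lem.Corr}); and (c) isolates as the crucial step exactly what you call the hard part, namely that $\V_Q$ is generated as an $\R G$-bimodule by its equivariant part, $\leftidx{_{\R G}}{\left\langle \V_Q^\Delta \right\rangle}_{\R G} = \V_Q$ (this is Lemma \ref{S5.lem.Fil}). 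Your assembly of (a)--(c) into the two identities $\V_{Q_\V} = \V$ and $Q_{\V_Q} = Q$ is the same as the paper's, and your well-definedness shortcut $Q_\V = j^{-1}(J_\V)$, imported from Theorems \ref{S2.thm.Corr} and \ref{S5.thm.NRS}, is a clean and valid alternative to the paper's direct verification via $\langle g, \mu \ast f \rangle = \langle \mu, \tilde{f} \ast g \rangle$.

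The genuine gap is step (c), which you correctly flag as the technical heart but leave as a programme rather than a proof. Moreover, the programme you sketch --- extracting $L_\infty(G)$-valued ``Fourier coefficients'' of an arbitrary $T \in \B(L_2 G)$ and characterizing $\wstspan \{ \R G \, W \, \R G \}$ coefficientwise --- runs head-on into the obstruction you yourself name: for non-discrete $G$ there is no normal slice or expectation producing such coefficients, and the coefficientwise description of the generated bimodule is a substantial unproved claim. The paper's Lemma \ref{S5.lem.Fil} takes a shorter route that avoids any coefficient calculus for general operators: writing $\Theta_\mu = T_\mu \rtimes \Id$ on $\B(L_2 G) = L_\infty(G) \rtimes_r G$, it invokes the kernel identity for $r$-equivariant maps,
\[
  \ker(T_\mu \rtimes \Id) \, = \, \wstspan \big( \ker(T_\mu) \, \R G \big) \, \subset \, \leftidx{_{\R G}}{\left\langle \ker(T_\mu) \right\rangle}_{\R G},
\]
and then intersects over $\mu \in Q$, so that $\V_Q = \bigcap_{\mu \in Q} \ker(T_\mu \rtimes \Id)$ lands inside $\leftidx{_{\R G}}{\left\langle \V_Q^\Delta \right\rangle}_{\R G}$, the reverse inclusion being trivial. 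In other words, the crossed-product structure is exploited only at the level of kernels of the specific equivariant maps $T_\mu$, never to decompose a general operator; replacing your coefficient-extraction step by this kernel identity turns your outline into the paper's proof.
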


Let us denote by $V_Q^\Delta$ the set
\[
  \V_Q^\Delta = \{ T \in \B(\H)^\Delta : \Theta_{\mu}(T) = 0, \quad \forall \mu \in Q\}.
\]
The proof of Theorem \ref{S5.thm.CorrInv} requires the following two lemmas.

\begin{lemma}
  \label{S5.lem.Fil}
  If $Q \subset MG$ is a $\sigma(C_b G)$-closed left ideal, then
  \begin{equation*}
    \label{S5.lem.Fil.1}
    \leftidx{_{\R G}}{\left\langle \V_Q^\Delta \right\rangle}_{\R G} = \V_Q.
  \end{equation*}
\end{lemma}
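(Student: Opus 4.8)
The plan is to prove the two inclusions separately, the containment $\leftidx{_{\R G}}{\langle\V_Q^\Delta\rangle}_{\R G}\subseteq\V_Q$ being the routine one. Each $\Theta_\mu$ is a normal, $\R G$-bimodular map, so $\ker\Theta_\mu=\{T:\Theta_\mu(T)=0\}$ is a weak-$\ast$ closed $\R G$-bimodule (if $\Theta_\mu(T)=0$ and $a,b\in\R G$ then $\Theta_\mu(aTb)=a\Theta_\mu(T)b=0$). As $\V_Q=\bigcap_{\mu\in Q}\ker\Theta_\mu$, it is itself a weak-$\ast$ closed $\R G$-bimodule containing $\V_Q^\Delta$, whence it contains the bimodule the latter generates.

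For the reverse inclusion I would exploit the $G$-grading of $\B(L_2 G)=L_\infty(G)\rtimes_r G$ coming from the dual coaction $\delta$, determined by $\delta(f)=f\otimes 1$ for $f\in L_\infty(G)$ and $\delta(\rho_g)=\rho_g\otimes\lambda_g$. Its homogeneous component of degree $g$ is $L_\infty(G)\rho_g$, and the degree-$0$ part is exactly $\B(\H)^\Delta=L_\infty(G)$. Two observations drive the argument: first, $\Theta_\mu$ preserves this grading and acts on the degree-$g$ piece by $f\rho_g\mapsto(\mu\ast f)\rho_g$, so that $\delta\circ\Theta_\mu=(\Theta_\mu\otimes\Id)\circ\delta$; second, consequently a simple element $f\rho_g$ lies in $\V_Q$ iff $\mu\ast f=0$ for every $\mu\in Q$, i.e. iff $f\in\V_Q^\Delta$. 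Thus the degree-$g$ component of $\V_Q$ is precisely $\V_Q^\Delta\rho_g\subseteq\leftidx{_{\R G}}{\langle\V_Q^\Delta\rangle}_{\R G}$, and it remains to recover $\V_Q$ as the weak-$\ast$ closed span of its homogeneous components.

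To do this cleanly I would argue in the predual. Set $C=\leftidx{_{\R G}}{\langle\V_Q^\Delta\rangle}_{\R G}$; it is weak-$\ast$ closed and convex, so by Hahn--Banach it suffices to show that any $\omega\in S_1(L_2 G)$ annihilating $C$ also annihilates $\V_Q$. Since $\omega$ kills every $f\rho_g$ with $f\in\V_Q^\Delta$, one reads off a family of shifted-diagonal functionals $\omega^g$, each annihilating $\V_Q^\Delta\subseteq L_\infty(G)$; and the trace pairing decomposes over the Fourier coefficients $d^g(T)\in L_\infty(G)$ of $T$ with respect to the right regular representation, with $d^g(T)\in\V_Q^\Delta$ whenever $T\in\V_Q$ by the intertwining above. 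Pairing an annihilator of $\V_Q^\Delta$ against an element of $\V_Q^\Delta$ in each slice forces $\langle\omega,T\rangle=0$. In the discrete case this is elementary: writing $\langle\omega,T\rangle=\Tr(\omega T)=\sum_{h,k}\omega_{h,k}T_{k,h}$ and substituting $h=kg$ gives $\langle\omega,T\rangle=\sum_{g\in G}\langle\omega^g,d^g(T)\rangle$ with $\omega^g_k=\omega_{kg,k}\in\ell_1(G)$ and $d^g(T)_k=\langle e_k,Te_{kg}\rangle\in\ell_\infty(G)$, and every term vanishes.

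The hard part is making this Fourier-coefficient decomposition rigorous for a general, possibly non-discrete and non-unimodular, locally compact $G$. For such $G$ the masa $L_\infty(G)\subseteq\B(L_2 G)$ carries no normal conditional expectation, so the maps $d^g$ cannot be defined by a naive pinching; I would instead obtain them, together with the identity $\langle\omega,T\rangle=\int_G\langle\omega^g,d^g(T)\rangle\,dg$, from the canonical dual operator-valued weight of the crossed product $L_\infty(G)\rtimes_r G$, that is from Landstad--Takesaki duality. Concretely I would first verify the identity for Hilbert--Schmidt $T$, where $d^g(T)$ is an honest change of variables in the kernel (the modular factor $\Delta(g)^{1/2}$ being absorbed), and then extend it by normality and weak-$\ast$ density. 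I expect this to be the only genuine obstacle; crucially it uses the Plancherel/Haar weight of $\L G$ rather than a bounded approximate identity of $A(G)$, so that no amenability hypothesis on $G$ is required. The conceptual backbone is simply that $\V_Q$ is a $\delta$-invariant weak-$\ast$ closed subspace and hence the weak-$\ast$ span of its homogeneous parts, which is exactly the spectral-synthesis content that this duality supplies.
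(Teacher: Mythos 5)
Your reduction is essentially the paper's: identify $\B(L_2 G)$ with $L_\infty(G)\rtimes_r G$, observe that $\Theta_\mu$ respects the grading coming from the dual coaction and acts fibrewise by $f\rho_g\mapsto(\mu\ast f)\rho_g$, and thereby reduce the lemma to the synthesis statement that every $T\in\V_Q$ lies in $\wstspan\{f\rho_g : f\in\V_Q^\Delta,\ g\in G\}$. Your easy inclusion and your identification of the homogeneous parts of $\V_Q$ are correct. The difference is what happens at the crux: the paper simply asserts the identity $\ker(T_\mu\rtimes\Id)=\wstspan\left(\ker(T_\mu)\,\R G\right)$ for equivariant $T_\mu$, while you attempt to prove it by Hahn--Banach plus a diagonal disintegration of the trace pairing, and it is exactly there that your argument has genuine gaps.

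Two concrete problems. First, even in the discrete case the manipulation $\Tr(\omega T)=\sum_{h,k}\omega_{h,k}T_{k,h}=\sum_{g}\langle\omega^g,d^g(T)\rangle$ is an unjustified rearrangement: a trace-class $\omega$ need not have absolutely summable matrix entries (take $\omega=\xi\xi^\ast$ with $\xi\in\ell_2\setminus\ell_1$), so $\Tr(\omega T)$ is only an iterated sum and cannot be regrouped along diagonals. The standard repair is a summability method: pair $\omega$ against $M_{u_i}(T)$, where $M_{u_i}$ is the Schur multiplier with Toeplitz symbol $u_i(x^{-1}y)$, $u_i$ finitely supported, $u_i\to 1$ pointwise and $\sup_i\|u_i\|_{M_{\cb}A(G)}<\infty$; then each $M_{u_i}(T)$ is a finite sum of homogeneous terms of $T$ and $M_{u_i}(T)\to T$ weak-$\ast$. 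But the existence of such a net is precisely an approximation hypothesis on $G$ (amenability, weak amenability, or the Haagerup--Kraus AP), so your closing claim that no such hypothesis is needed is not delivered by your argument, and for groups failing AP it is unclear how to run this step at all. Second, in the non-discrete case the proposed extension (``verify the identity on Hilbert--Schmidt operators, then extend by normality and weak-$\ast$ density'') is circular: the right-hand side $\int_G\langle\omega^g,d^g(T)\rangle\,dg$ is not a well-defined functional of a general $T\in\B(L_2 G)$, since Fourier coefficients of a bounded operator do not exist pointwise in $g$, so there is nothing to extend; what you would actually need is that $\V_Q\cap S_2(L_2 G)$ is weak-$\ast$ dense in $\V_Q$, and that \emph{relative} density --- approximating an element of $\V_Q$ by nicer ones without leaving $\V_Q$ --- is essentially the statement being proved. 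The dual operator-valued weight of Landstad--Takesaki duality produces Fourier coefficients of integrable elements, but it does not by itself supply this relative approximation. In short, you have correctly isolated the one step the paper itself passes over without proof, but your proposal does not close it.
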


\begin{proof}
  After identifying $\B(L_2 G)$ with $L_\infty(G) \rtimes_r G$ again, we have
  that $\Theta_\mu = T_\mu \rtimes \Id$, where $T_\mu$ is the left convolution
  operator associated to $\mu$. We have that
   \[
     \begin{array}{rc>{\displaystyle}lrc>{\displaystyle}l}
       \V_Q & = & \bigcap_{\mu \in Q} \ker(T_\mu \rtimes \Id), & \V^\Delta_Q & = & \bigcap_{\mu \in Q} \ker(T_\mu).
    \end{array}
  \]
  But know we use that if $T$ is a $r$-equivariant operator then
  \[
    \begin{array}{rcl}
      \ker(T \rtimes \Id) & =       & \wstspan \left( \ker(T) \, \R G \right) \\
                          & =       & \wstspan \{ f \, \rho_g : f \in \ker(T), \, g \in G\}\\
                          & \subset & \leftidx{_{\R G}}{\left\langle \ker(T) \right\rangle }_{\R G}.
    \end{array}
  \]
  Using that the intersection of closures is larger that the closure of the
  intersections we get that
  \[
    \begin{array}{rr>{\displaystyle}lll}
      \V_Q & =       & \bigcap_{\mu \in Q} \ker(T_\mu \rtimes \Id)                                           &   &\\
           & \subset & \bigcap_{\mu \in Q} \leftidx{_{\R G}}{\left\langle \ker(T_\mu) \right\rangle }_{\R G} &   &\\
           & \subset & \leftidx{_{\R G}}{\Big\langle} \bigcap_{\mu \in Q} \ker(T_\mu) {\Big\rangle}_{\R G}   & = & \leftidx{_{\R G}}{\left\langle \V^\Delta_Q \right\rangle}_{\R G}.
    \end{array}
  \]
  The other inclusion is trivial since $\V_Q^\Delta \subset \V_Q$ and
  $\V_Q$ is a $\R G$-bimodule.
\end{proof}

\begin{lemma}
  \label{S5.lem.Corr}
  Let $Q \mapsto \V_Q^\Delta$ and $\V^\Delta \mapsto Q_{\V^\Delta}$ be as
  above, we have that
  \begin{enumerate}[label=\emph{(\arabic*)}, ref={(\arabic*)}]
    \item \label{S5.lem.Corr.1} $Q_{\V^\Delta_Q} = Q$.
    \item \label{S5.lem.Corr.2} ${\V^\Delta}_{Q_{\V^\Delta}} = \V^\Delta$.
  \end{enumerate}
\end{lemma}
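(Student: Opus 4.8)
The plan is to transport everything to $L_\infty(G)$, on which the equivariant operators live, and to recognize the two maps as mutual convolution-annihilators. Recall from the proof of Theorem \ref{S5.thm.NRS} that $\B(\H)^\Delta = L_\infty(G)$ and that $\Theta_\mu$ restricted to $L_\infty(G)$ is the left convolution operator $T_\mu(f) = \mu * f$, so that $\V^\Delta_Q = \{ f \in L_\infty(G) : \mu * f = 0, \ \forall \mu \in Q\}$ and $Q_{\V^\Delta} = \{\mu \in M G : \mu * g = 0, \ \forall g \in \V^\Delta\}$. Both assignments are order-reversing, and the inclusions $Q \subseteq Q_{\V^\Delta_Q}$ and $\V^\Delta \subseteq \V^\Delta_{Q_{\V^\Delta}}$ are immediate; moreover $Q_{\V^\Delta}$ is a $\sigma(C_b G)$-closed left ideal (testing $\mu * g = 0$ against $\phi \in L_1(G)$ pairs $\mu$ with the $C_b(G)$-function $y \mapsto \int \phi(x)\, g(y^{-1}x)\,dx$, so its defining conditions are $\sigma(C_b G)$-closed), while $\V^\Delta_Q$ is a weak-$\ast$ closed, right-translation invariant subspace of $L_\infty(G)$ since each $T_\mu$ is normal and commutes with right translation. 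The content is therefore the two reverse inclusions, which I would obtain by Hahn--Banach in the appropriate dualities, using in each case the reflection $\check F(y) = F(y^{-1})$ to pass between a separating functional and a convolver.

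For \ref{S5.lem.Corr.1}, I would fix $\mu_0 \notin Q$. Since $Q$ is $\sigma(C_b G)$-closed, Hahn--Banach produces $h \in C_b(G)$ with $\int h\, d\mu = 0$ for all $\mu \in Q$ and $\int h\, d\mu_0 \neq 0$. I then claim its reflection $\check h \in C_b(G) \subset L_\infty(G)$ lies in $\V^\Delta_Q$: indeed $(\mu * \check h)(x) = \int h(x^{-1}y)\,d\mu(y) = \int h\, d(\delta_{x^{-1}} * \mu)$, and $\delta_{x^{-1}} * \mu \in Q$ precisely because $Q$ is a \emph{left} ideal, so this vanishes for every $\mu \in Q$ and every $x$. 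On the other hand $(\mu_0 * \check h)(e) = \int h\, d\mu_0 \neq 0$, whence $\mu_0 \notin Q_{\V^\Delta_Q}$. This is exactly the step where the left-ideal hypothesis is consumed.

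For \ref{S5.lem.Corr.2}, I would use that $\V^\Delta$, being the equivariant part of an invariant quantum relation, is a weak-$\ast$ closed, right-translation invariant subspace of $L_\infty(G)$ (if $g \in \V^\Delta$ then $r_x g = g(\,\cdot\, x) \in \V^\Delta$, since $r_x$ is implemented by conjugation by $\rho_x \in \R G$ and $\V$ is an $\R G$-bimodule). Fix $f_0 \notin \V^\Delta$; by weak-$\ast$ Hahn--Banach in $L_\infty(G) = L_1(G)^\ast$ there is $\psi \in L_1(G)$ with $\langle \psi, g\rangle = 0$ for all $g \in \V^\Delta$ and $\langle \psi, f_0 \rangle \neq 0$. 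Let $\mu \in M G$ be the measure determined by $\langle \mu, F\rangle = \langle \psi, \check F\rangle$. A change of variables gives $(\mu * g)(x) = \int \psi(z)\, g(zx)\,dz = \langle \psi, r_x g\rangle$; since $r_x g \in \V^\Delta$ by right-invariance, this vanishes for all $x$, so $\mu \in Q_{\V^\Delta}$. Finally $(\mu * f_0)(e) = \langle \psi, f_0\rangle \neq 0$, so $f_0 \notin \V^\Delta_{Q_{\V^\Delta}}$. Here it is the right-invariance of $\V^\Delta$ that forces $\mu$ into the annihilator.

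The routine but genuinely necessary care, which I regard as the main obstacle, is the left/right and modular-function bookkeeping: one must pin down that the separators produced by Hahn--Banach really lie in $C_b(G)$ (for the $\sigma(C_b G)$ predual) and in $L_1(G)$ (for the weak-$\ast$ predual of $L_\infty(G)$), choose the reflection compatible with the left Haar measure, and correctly match the \emph{left} convolution defining $\Theta_\mu$ against the \emph{right}-invariance of $\V^\Delta$ and the \emph{left}-ideal structure of $Q$; it is this alignment that makes each hypothesis do its work. A last small point is to legitimize the pointwise evaluations at $e$: since $\check h \in C_b(G)$ and $\psi \in L_1(G)$, the functions $\mu_0 * \check h$ and $x \mapsto \langle \psi, r_x f_0\rangle$ are continuous, so nonvanishing at $e$ indeed yields nonvanishing as elements of $L_\infty(G)$.
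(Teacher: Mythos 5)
Your proof is correct and follows essentially the same route as the paper: Hahn--Banach separation in the two dualities ($M G$ against $C_b(G)$, and $L_\infty(G)$ against $L_1(G)$), with the left-ideal property of $Q$ and the right-translation invariance of $\V^\Delta$ converting the separating functional into an element of the opposite annihilator. Your explicit handling of the reflection $\check h$ and the modular factor actually tightens a point the paper glosses over (it asserts $\mu \ast f_0 = 0$ directly, which strictly speaking holds for the reflected function), and you write out part \ref{S5.lem.Corr.2}, which the paper dismisses as ``by similar means.''
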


\begin{proof}
  Let us start by \ref{S5.lem.Corr.1}. It is trivial that $Q \subset Q_{\V_Q}$.
  We only have to prove the reverse inclusion. Assume that $Q_{\V_Q}$
  is greater that $Q$. Then by the
  Hanh-Banach Theorem, for any $\mu_0 \in Q_{\V_Q} - Q$ we can take a
  functional $f_0 \in C_b(G)$ such that
  $\langle \mu_0, f_0 \rangle \neq 0$ but $\langle \mu, f_0 \rangle = 0$,
  for every $\mu \in Q$. Since $Q$ is a translation invariant space we
  have that $\mu \ast f_0 = 0$ for every $\mu \in Q$ but
  $\mu_0 \ast f_0 \neq 0$. The first condition implies that $f_0 \in \V_Q$,
  which contradict the fact that $\mu_0 \in Q_{\V_Q}$.
  
  For \ref{S5.lem.Corr.2} it is again clear that $\V^\Delta \subset \V_{Q_\V}$
  and we only have to prove the converse inclusion. By similar means using
  the Hanh-Banach theorem and the translation invariance of $V^\Delta$ we get
  the result.
\end{proof}

Now, we can proceed to prove the main correspondence theorem.

\begin{proof}{\bf (of Theorem \ref{S5.thm.CorrInv})}
  Let us start seeing that $Q_{\V_Q}$ is a $\sigma(C_b G)$-closed ideal. Notice
  that, $\mu \ast f = 0$ if and only if $\langle g, \mu \ast f \rangle = 0$
  for every $g \in L_1(G)$, but
  $\langle g, \mu \ast f \rangle = \langle \mu, \tilde{f} \ast g \rangle$,
  where $\tilde{f}(x) = f(x^{-1})$. Since $\tilde{f} \ast g$ is a right
  uniformly bounded function in $C_b(G)$, the kernel of
  $\mu \mapsto \mu \ast f$ is $\sigma(C_b G)$-closed and so
  is $Q_{\V_Q}$. The fact that $\V_Q$ is a weak-$\ast$ closed $\R G$-bimodule
  is immediate since $\Theta_\mu$ is weak-$\ast$ continuous $\R G$-bimodular
  map. The fact that is $\Delta$-invariant follows from \ref{S5.lem.Fil}.
  To prove that $Q = Q_{\V_Q}$ we just apply the following lemmas.
  \[
    \begin{array}{rclc}
      Q & = & Q_{\V^\Delta_Q}                                                      & \mbox{(by Lemma \ref{S5.lem.Corr})}\\
        & = & Q_{\leftidx{_{\R G}}{\left\langle \V_Q^\Delta \right\rangle}_{\R G}} &\\
        & = & Q_{\V_Q}                                                             & \mbox{(by Lemma \ref{S5.lem.Fil})}.
    \end{array}
  \]
  Similarly, taking the $\R G$-bimodules generated by the left and the right
  hand side of \ref{S5.lem.Corr.2}, we get that
  \[
    \leftidx{_{\R G}}{\left\langle \V^\Delta \right\rangle}_{\R G} 
    = 
    \leftidx{_{\R G}}{\big\langle} \V^\Delta_{Q_{\V^\Delta}} {\big\rangle}_{\R G}
    =
    \V_{Q_{\V^\Delta}}
    =
    \V_{Q_{\leftidx{_{\R G}}{\langle} \V^\Delta {\rangle}_{\R G}}}.
  \]
  But, by $\Delta$-invariance, the leftmost element is $\V$ and the rightmost
  is $\V_{Q_\V}$ and we conclude.
\end{proof}

\begin{remark}
  We have exposed here the theory of invariant quantum relations for $\L G$.
  The same proof above works, after \cite{NeuRuanSpronk2008} and
  \cite{JunNeuRu2009}, for a general quantum group $(\A,\Delta)$ just
  by replacing left ideals in $M G$ by left ideals in $M_\cb A(\A)$.
\end{remark}

Recall that if $G = \ZZ^n$, or any other abelian discrete group, then
$\L G = L_\infty(\TT^n)$ and any ideal $Q$ of $M G = \ell_1(\ZZ^n)$
correspond to a closed subset $C_Q \subset \hat{G}$ and such correspondence
in injective. Nevertheless, not every ideal in $M G$ is $\sigma(C_b G)$-closed
and therefore not all closed subsets will appear in the image of the
correspondence. We have that, in the invariant case, any quantum relation
$\V$ over $L_\infty(\TT^n)$ is actually a topological relation given by
\[
  (\theta_1, \theta_2) \in R \, \Longleftrightarrow \, \theta_1 ^{-1} \, \theta_2 \in C,
\]
where $C \subset \TT^n$ is a closed set. 

\section{\bf Remarks on $L_p$-$L_q$ versions of Quantum Relations}
In the introduction of \cite{Wea2012} it is stated that the
natural, albeit naive, candidate for quantized relations over a von
Neumann algebra $\M$ are the projections on $\M \weaktensor \M_\op$, but
that such object do not have desirable properties. The question of which
properties are missed is left unanswered there. Our aim here is to give an
intuitive explanation on why there is no well-behaved composition operation
between projection in $\M \weaktensor \M_\op$. After that, we will see that
there is a larger family of \emph{generalized quantum relations} that
contains both quantum relations and projections in $\M \weaktensor \M_\op$
as particular cases.

Start recalling that if $\M$ is a von Neumann algebra and $\phi$ is a normal,
faithful and semifinite weight we can define the noncommutative $L_p$-spaces
$L_p(\M,\phi)$, or simply $L_p(\M)$ if $\phi$ is understood from the context,
see \cite{PiXu2003}. Such spaces generalize the classical $L_p$-spaces $L_p(X,\mu)$
when $\M = L_\infty(X)$ and there are isomorphic identifications of
$L_\infty(\M)$ with $\M$, of $L_2(\M)$ with the GNS construction of $\phi$ and
of $L_1(\M)$ with the predual $\M_\ast$. There is a canonical o.s.s. for these
spaces given by operator space interpolation between
\begin{eqnarray*}
  L_1(\M)      & = & \M_\ast^\op\\
  L_\infty(\M) & = & \M,
\end{eqnarray*}
where the operator space structure of $\M^\op_\ast$ is given by restriction
of that of $(\M^\op_\ast)^{\ast \ast} = \M_\op^\ast$. Apart from being
compatible with interpolation, such spaces satisfy that $L_p(\M)^\ast =
L_{p'}(\M_\op)$, see \cite{Pi1998} as well as the remarks on
\cite[pp. 138]{Pi2003}. It is also known that $\CB(L_2(\M)) = \B(L_2(\M))$. The
spaces $L_p(\M)$ can be turned into $\M$-bimodules. Indeed, let we have two
commuting c.b. representations $l_p: \M \to \CB(L_p(\M))$ and
$r_p: \M_\op \to \CB(L_p(\M))$ generalizing the commuting actions in the
GNS construction of $\M$ when $p=2$. The module structure of
noncommtutative-$L_p$ has been studied in \cite{JunSher2005}. Let us denote by
$S' \subset \CB(L_p(\M))$ the commutant of $S \subset \CB(L_p(\M))$, by
\cite[Theorem 1.5]{JunSher2005}, we have that
\begin{eqnarray*}
  l_p[\M]'     & = & r_p[\M_\op]\\
  r_p[\M_\op]' & = & l_p[\M].
\end{eqnarray*}
Let us denote by $\CB_{p,q}$ the operator space given by
$\CB(L_p(\M),L_q(\M))$. Such spaces have a natural predual given by
\[
  \begin{array}{rllc}
    \CB(L_p(\M), L_q(\M)) & = & \CB(L_p(\M), L_{q'}(\M_\op)^*)                & \\
                          & = & \CB(L_p(\M), \CC) \Fubtensor L_{q'}(\M_\op)^* &  \mbox{(by \cite[Th. 4.1]{Pi2003})} \\
                          & = & (L_p(\M)  \osprojtensor L_{q'}(\M_\op))^*.    &
  \end{array}
\]
There are natural left actions on $\CB_{p,q}$ by $r_p[\M_\op]$ and $l_p[\M]$
and right actions by $r_q[\M_\op]$, $l_q[\M]$. We say that a subspace
$\V \subset \CB_{p,q}$ is a \emph{$(p,q)$-quantum relation} over
$\M$ iff $\V$ is weak-$\ast$ closed, with respect to the predual
$L_p(\M) \osprojtensor L_{q'}(\M_\op)$, and a
$r_p[\M_\op]$-$r_q[\M_\op]$-bimodule. It is easily shown that such relations
are independent of $\phi$. We also have the following.

\begin{proposition}
  \label{FW.prp.pqQ}
  \hfill
  \begin{enumerate}[label=\emph{(\roman*)}, ref=(\roman*)]
    \item \label{FW.prp.pqQ.1} Quantum relations over $\M$ correspond
    to $(2,2)$-quantum relations.    
    \item \label{FW.prp.pqQ.2} Projections in $\M \weaktensor \M_\op$ are in
    bijective correspondence with $(1,\infty)$-quantum relations.
  \end{enumerate}
\end{proposition}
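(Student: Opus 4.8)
The plan is to prove each part by unwinding the definition of a $(p,q)$-quantum relation through a concrete model of $\CB(L_p(\M),L_q(\M))$ as an operator bimodule. For \ref{FW.prp.pqQ.1} I would realise $L_2(\M)$ as the GNS/standard form of $\M$, so that $\M\subset\B(L_2(\M))$ in the usual way. Using the commutant identities $l_2[\M]'=r_2[\M_\op]$ and $r_2[\M_\op]'=l_2[\M]$ quoted from Junge--Sherman (equivalently Tomita--Takesaki), one has $r_2[\M_\op]=\M'$. Since $\CB(L_2(\M))=\B(L_2(\M))$ and the predual $L_2(\M)\osprojtensor L_2(\M_\op)$ of $\CB_{2,2}$ induces exactly the $\sigma$-weak topology of $\B(L_2(\M))$, the phrase ``weak-$\ast$ closed'' in the definition has its ordinary meaning. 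Thus a $(2,2)$-quantum relation is precisely a weak-$\ast$ closed subspace $\V\subseteq\B(L_2(\M))$ which is both a left and a right module over $r_2[\M_\op]=\M'$, i.e. $\M'\,\V\,\M'\subseteq\V$. This is verbatim the definition of a quantum relation over $\M$, and as quantum relations do not depend on the chosen representation the identity map is the desired bijection; hence \ref{FW.prp.pqQ.1} reduces to matching the two definitions.

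For \ref{FW.prp.pqQ.2} the first task is to identify the ambient space. From $\CB(E,F^\ast)=(E\osprojtensor F)^\ast$ with $E=L_1(\M)=\M_\ast^\op$ and $F^\ast=L_\infty(\M)=\M$ we get $\CB_{1,\infty}=(\M_\ast^\op\osprojtensor\M_\ast)^\ast$. On the other hand the predual of a von Neumann tensor product is the operator space projective tensor product of the preduals, so $(\M_\ast\osprojtensor(\M_\op)_\ast)^\ast=\M\weaktensor\M_\op$; together with the symmetry of $\osprojtensor$ this yields a completely isometric and weak-$\ast$ homeomorphic identification $\CB_{1,\infty}\cong\M\weaktensor\M_\op$, given on simple tensors by $\Phi_{a\otimes b}(\omega)=\omega(a)\,b$. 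I would then transport the two defining module actions through this identification: the left $r_1[\M_\op]$-action, being a domain action on $L_1(\M)$ and hence precomposition, becomes multiplication by $\M\otimes\1$ on one side of $\M\weaktensor\M_\op$, while the right $r_\infty[\M_\op]$-action, being a codomain action on $L_\infty(\M)$ and hence postcomposition, becomes multiplication by $\1\otimes\M_\op$ on the opposite side. Consequently a $(1,\infty)$-quantum relation is exactly a weak-$\ast$ closed subspace $\V\subseteq\M\weaktensor\M_\op$ with $\V\,(\M\otimes\1)\subseteq\V$ and $(\1\otimes\M_\op)\,\V\subseteq\V$; equivalently $\V$ is invariant under the normal action $s\mapsto(\1\otimes b)\,s\,(a\otimes\1)$ of $\M_\op\weaktensor\M_\op$.

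The remaining step, which I expect to be the crux, is to show that these invariant weak-$\ast$ closed subspaces are precisely the ranges of projections in $\M\weaktensor\M_\op$. The mechanism is that the two one-sided conditions free up two of the four ``tensor legs'', so that $\V$ is determined by an arbitrary weak-$\ast$ closed subspace in the two remaining legs, and such subspaces are exactly the ranges of projections $p\in\M\weaktensor\M_\op$ (the opposite algebra in the second leg absorbing the transpose between the surviving ket and bra indices). In finite dimensions this is a direct tensor-index computation and recovers Weaver's finite-dimensional correspondence as a consistency check, compatibly with \ref{FW.prp.pqQ.1}. The difficulty is to make this ``freeing of legs'' rigorous for an arbitrary von Neumann algebra $\M$: there I would instead compute the commutant of the normal $\M_\op\weaktensor\M_\op$-action $s\mapsto(\1\otimes b)\,s\,(a\otimes\1)$, identify it canonically with $\M\weaktensor\M_\op$, and invoke the standard fact that weak-$\ast$ closed subspaces invariant under a normal module action correspond bijectively to projections in the commutant. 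Checking that this identification is $\dagger$-compatible and topologically correct, and that the map $\V\mapsto p$ so produced is inverse to $p\mapsto\overline{(\1\otimes\M_\op)\,p\,(\M\otimes\1)}^{\,\wast}$, would complete the proof.
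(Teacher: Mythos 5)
Your part \ref{FW.prp.pqQ.1} and your identification $\CB_{1,\infty}\cong\M\weaktensor\M_\op$ in part \ref{FW.prp.pqQ.2} agree with the paper. The gap is in how you transport the two module actions through that identification, and it breaks the crux of part \ref{FW.prp.pqQ.2}. Compute directly with your map $\Phi_{a\otimes b}(\omega)=\omega(a)\,b$: since the right action on $L_1(\M)=\M_\ast$ dualizes as $(\omega\, t)(a)=\omega(t\,a)$, and $r_\infty(z)$ right-multiplies the output,
\[
  \big(r_\infty(z)\circ\Phi_{a\otimes b}\circ r_1(t)\big)(\omega)
  \;=\;(\omega\,t)(a)\;b\,z
  \;=\;\omega(t\,a)\;b\,z
  \;=\;\Phi_{ta\otimes bz}(\omega).
\]
In the algebra $\M\weaktensor\M_\op$, whose multiplication is $(x\otimes y)(z\otimes t)=xz\otimes ty$, one has $ta\otimes bz=(t\otimes z)(a\otimes b)$: the opposite multiplication in the second leg converts $b\mapsto bz$ into \emph{left} multiplication by $\1\otimes z$, and precomposition becomes \emph{left} multiplication by $t\otimes\1$ --- not right multiplication, as you claim. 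So the two actions land on the \emph{same} side and jointly give one-sided multiplication by the elementary tensors $t\otimes z$, which span $\M\weaktensor\M_\op$ weak-$\ast$ densely. Hence a $(1,\infty)$-quantum relation is exactly a weak-$\ast$ closed one-sided ideal of $\M\weaktensor\M_\op$; this is the content of the paper's computation $r_\infty(z)\,T\,r_1(t)=j\big((z\otimes t)(x\otimes y)\big)$. Your transported condition $(\1\otimes\M_\op)\,\V\,(\M\otimes\1)\subseteq\V$ is a genuinely different (bimodule over two proper subalgebras) condition: for $\M=M_2$ the subspace $e_{11}M_2\otimes e_{11}M_2\subset M_2\weaktensor M_2^\op$ satisfies it but is not a one-sided ideal, and conversely a one-sided ideal $p\,(\M\weaktensor\M_\op)$ satisfies it only when $p$ commutes with $\1\otimes\M_\op$.

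This error is fatal for your final step, because the ``standard fact'' you invoke --- that weak-$\ast$ closed subspaces invariant under a normal module action are in bijection with projections in the commutant of that action --- is not a theorem in this setting. Projections in the commutant classify the closed invariant subspaces of a \emph{Hilbert space} carrying the action (e.g.\ of $L_2(\M\weaktensor\M_\op)$), not the weak-$\ast$ closed invariant subspaces of the von Neumann algebra itself; conflating the two is precisely what fails throughout this subject: if such a fact held, Weaver's quantum relations (weak-$\ast$ closed $\M'$-bimodules in $\B(\H)$) would all arise from projections, which is false in general and is the motivation for the whole paper. The correct mechanism, and the paper's, needs no commutant at all: once the bimodule condition is identified with a one-sided ideal condition, one invokes the genuine standard fact that every weak-$\ast$ closed one-sided ideal $J$ of a von Neumann algebra $\N$ is principal, $J=\N p$ (resp.\ $p\,\N$) for a unique projection $p\in\N$, applied to $\N=\M\weaktensor\M_\op$. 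With your transport corrected as above, the rest of your outline collapses to exactly this argument.
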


\begin{proof}
  The proof of \ref{FW.prp.pqQ.1} is immediate since $\CB_{2,2} = \B(L_2(\M))$,
  $l_2: \M \to \B(L_2(\M))$ is the GNS representation of $\M$ and $r_2[\M]$ is
  just the commutant $M'$ for that representation. 
  In order to prove \ref{FW.prp.pqQ.2} we need to use that the map
  $j: \M \algtensor \M \to \CB(\M_\ast,\M)$ given by linear extension of
  \[
    j(x \otimes y)(\xi) = \langle y, \xi \rangle \, x 
  \]
  extends to a weak-$\ast$ continuous complete isomorphism
  $j: \M \weaktensor \M \to \CB(\M_\ast,\M)$, see
  \cite[Theorem 2.5.2]{Pi2003}.
  we have that
  \[
    \CB_{1,\infty} = \CB(L_1(\M), \M) = \CB(\M_\ast^\op, \M) = \M \weaktensor \M_\op.
  \]
  But, if $T = j(x \otimes y)$, we have that $r_\infty(z) \, T \, r_1(t) =
  j(z \, x \otimes y \, t) = j( (z \otimes t) \, (x \otimes y))$ and so a
  subspace $\V \subset \CB_{1,\infty}$ is
  $r_\infty[\M_\op]$-$r_1[\M_\op]$-bimodular iff, after seeing $\V$ as a subspace
  of $\M \weaktensor \M_\op$, it is a left ideal. Since the map $j$ is an isomorphism
  for the weak-$\ast$ topology, $\V \subset \M \weaktensor \M_\op$ is also
  weak-$\ast$ closed. But any weak-$\ast$ closed left ideal is of the form
  $\V = (\M \weaktensor \M_\op) \, P$, where $P \in \Prj(\M \weaktensor \M_\op)$.
\end{proof}

\begin{remark}
  The result above explains intuitively why we cannot expect to define a
  well-behaved composition operation between projections
  $P,Q \in \Prj(\M \weaktensor \M_\op)$. That composition will be carried
  to the composition of operators in $\CB(L_1(\M),\M)$ but that cannot be
  done, in general, since $\M$ does not embeds canonically in $L_1(\M)$.
\end{remark}

It is natural to ask whether $(p,q)$-quantum relations are in correspondence with
left ideals $J \subset \CB^\sigma_{r_q[\M_\op] \mbox{-} r_p[\M_\op]}(\CB_{p,q})$
suitably closed in some weak topology. The following proposition asserts that
this is the case when $(p,q) = (1,\infty)$.

\begin{proposition}
  The map $\Phi^{1,\infty}(x \otimes y) = l_\infty(x) \, l_1(y)$ extends to a
  weakly continuous complete isomorphism
  \begin{equation*}
     \Phi^{1,\infty}: \M \weaktensor \M_\op \to \CB^\sigma_{r_\infty[\M_\op] \mbox{-} r_1[\M_\op]}(\CB_{1,\infty}).
  \end{equation*}
  Under such correspondence any weakly closed left ideal $J$ is of the
  form $J = P \, (\M \weaktensor \M_\op)$ and its associated bimodule $V_J$
  corresponds, under the bijection in \emph{\ref{FW.prp.pqQ.2}}, to
  $P^\perp \in \Prj(\M \weaktensor \M_\op)$.
\end{proposition}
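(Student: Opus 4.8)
The plan is to reduce the whole statement to the structure theory of weak-$\ast$ closed one-sided ideals in a von Neumann algebra, using the identifications already set up in Proposition \ref{FW.prp.pqQ}. Write $\N = \M \weaktensor \M_\op$. Recall from the proof of \ref{FW.prp.pqQ.2} that $j$ gives a weak-$\ast$ complete isomorphism $\CB_{1,\infty} = \CB(\M_\ast^\op,\M) \cong \N$, and that under $j$ the left action $r_\infty[\M_\op]$ together with the right action $r_1[\M_\op]$ becomes the left multiplication of $\N$ on itself, since $r_\infty(z)\, j(a\otimes b)\, r_1(t) = j\big((z\otimes t)(a\otimes b)\big)$. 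Consequently $\CB^\sigma_{r_\infty[\M_\op]\mbox{-}r_1[\M_\op]}(\CB_{1,\infty})$ is exactly the algebra of weak-$\ast$ continuous left-$\N$-module endomorphisms of $\N$. The first step is to identify this algebra: any such $\Psi$ satisfies $\Psi(w)=\Psi(w\cdot 1)=w\,\Psi(1)$, so it is the right multiplication $R_c\colon w\mapsto wc$ by $c=\Psi(1)$, and conversely every $R_c$ is a weak-$\ast$ continuous left-module map. This yields a weak-$\ast$ homeomorphic completely isomorphic bijection $c\mapsto R_c$ between $\N$ and the module algebra, which is \emph{anti}-multiplicative because $R_{c_1}R_{c_2}=R_{c_2c_1}$.

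Second, I would verify that $\Phi^{1,\infty}$ realizes this correspondence. Bimodularity of $\Phi^{1,\infty}(x\otimes y)(T)=l_\infty(x)\,T\,l_1(y)$ is immediate from the commutation of the actions $l_p$ and $r_p$, so $\Phi^{1,\infty}$ lands in $\CB^\sigma_{r_\infty\mbox{-}r_1}(\CB_{1,\infty})$; its weak-$\ast$ continuity and complete boundedness follow from those of the representations $l_\infty,l_1$ and of the isomorphism $j$. A one-line computation using $l_\infty(x_1)l_\infty(x_2)=l_\infty(x_1x_2)$ and $l_1(y_2)l_1(y_1)=l_1(y_2y_1)$ shows $\Phi^{1,\infty}(x_1\otimes y_1)\Phi^{1,\infty}(x_2\otimes y_2)=\Phi^{1,\infty}\big((x_1\otimes y_1)(x_2\otimes y_2)\big)$, so $\Phi^{1,\infty}$ is an injective weak-$\ast$ continuous homomorphism. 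Surjectivity onto the module algebra is where the description above is used: every target element is an $R_c$, determined by its value at the unit $j(1\otimes 1)$, and a direct computation with $j$ identifies $\Phi^{1,\infty}(x\otimes y)\big(j(1\otimes 1)\big)$ with the corresponding generator of $\N$; ranging over simple tensors and passing to weak-$\ast$ limits then exhausts $\N$. This gives the asserted weak-$\ast$ continuous complete isomorphism.

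Finally, the ideal statement is a translation of the structure theorem. A $\sigma$-weakly closed one-sided ideal of the von Neumann algebra $\N$ is generated by a single projection; under the anti-multiplicative parametrization $c\mapsto R_c$, a weak-$\ast$ closed \emph{left} ideal $J$ of the composition algebra $\CB^\sigma_{r_\infty\mbox{-}r_1}$ corresponds to a weak-$\ast$ closed \emph{right} ideal of $\N$, hence is of the form $J=\{R_c : c\in P\,\N\}$ for some $P\in\Prj(\N)$, which is what is meant by $J=P\,(\M\weaktensor\M_\op)$. It then remains to compute the associated bimodule $\V_J=\{T\in\CB_{1,\infty} : R_c(T)=0,\ \forall\, c\in P\N\}$. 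Since $R_c(T)=Tc$ and $P\N=\{Pa : a\in\N\}$, the requirement $TPa=0$ for all $a$ is equivalent to $TP=0$, i.e. to $T=TP^\perp$; thus $\V_J=\N P^\perp=(\M\weaktensor\M_\op)P^\perp$. By the bijection of \ref{FW.prp.pqQ.2}, sending the left ideal $\N R$ to the projection $R$, this bimodule corresponds precisely to $P^\perp$, as claimed.

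The main obstacle I anticipate is the surjectivity/complete-isomorphism half of the first two steps: showing that \emph{every} normal $r_\infty$-$r_1$-bimodular endomorphism of $\CB_{1,\infty}$ is a right multiplication (equivalently, lies in the range of $\Phi^{1,\infty}$) and controlling the completely bounded norms in both directions. Everything else — multiplicativity, bimodularity, the ideal bookkeeping, and the computation of $\V_J$ — is routine once the identification $\CB_{1,\infty}\cong\N$ and the commuting actions are in place. A secondary point requiring care is the left/right convention: because $c\mapsto R_c$ is anti-multiplicative, a left ideal of the operator algebra becomes the right ideal $P\,\N$, and it is this $P$ (rather than its complement) whose orthocomplement indexes $\V_J$.
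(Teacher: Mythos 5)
Your proof is correct and follows essentially the same route as the paper: the paper's own proof is a two-line sketch invoking exactly the fact you establish, namely that normal one-sided module maps of a von Neumann algebra $\N$ are multiplications on the other side, applied to $\N = \M \weaktensor \M_\op$ together with Proposition \ref{FW.prp.pqQ}. Your extra bookkeeping --- the anti-multiplicative parametrization $c \mapsto R_c$, which turns the left ideal $J$ into the right-ideal form $P \, (\M \weaktensor \M_\op)$ and yields $\V_J \leftrightarrow P^\perp$ --- simply makes explicit the left/right conventions that the paper's sketch glosses over.
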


To prove the theorem above just notice that if $\N$ is a
von Neumann algebra, normal right $\N$-modular maps $T:\N \to \N$ are given
by left multiplication. Then, by applying that result to
$\N = \M \weaktensor \M_\op$ and using proposition \ref{FW.prp.pqQ} we conclude.

The discussion above leaves two natural open problems.
\begin{problem}
  \label{FW.pro}
  \hfill
  \begin{enumerate}[label= \textbf{(P\arabic*.)},ref=\textbf{(P\arabic*.)}]
    \item \label{FW.pro.1} Determine whether the double annihilator relation
    in Theorem \ref{S2.thm.Corr} between modules and ideals
    holds in general for $(p,q)$-quantum relations.
    \item \label{FW.pro.2} define an operator space tensor product
    $\otimes_{p,q}$ such that the map
    $\Phi^{p,q} : \M \algtensor \M_\op \to
    \CB^\sigma_{r_q[\M_\op] \mbox{-} r_p[\M_\op]}(\CB_{p,q})$ extends
    as a complete isometry to $\M \otimes_{p,q} \M_\op$.
  \end{enumerate}
\end{problem}

\section{\bf $W^\ast$-metrics and c.b. Gaussian bounds}
The aim of this section is to explain the original
motivation that guided us into studying quantum relations. Such motivation was
the necessity on \cite{GonJunPar2015} and \cite{GonJunPar2016QE} of expressing
\emph{off-diagonal} bounds in the context of noncommutative metric spaces.
Recall that in the classical case an operator $T = [a_{x,y}]_{x,y \in X}$
affiliated to $\B(\ell_2 X)$ has off-diagonal bounds if certain norms of
\[
  [a_{x \, y} \, \chi_{\{(x,y) : d(x,y) > r\}}]_{x,y \in X}
\]
decay in terms of $r > 0$. Earlier definition of \emph{quantum metric spaces} in
the $C^\ast$-algebraic framework, see \cite{Rief2004Gromov},
\cite{Rief2004Compact}, do not provide a natural way of formulating such
notion. On the other hand the notion of $W^\ast$-metric introduced by
Kuperberg and Weaver in \cite{KuWea2012} seems particularly well suited to the
task since a $W^\ast$-metric is a noncommutative generalization of the bundle
of band matrices of width $r > 0$. 

% definition of quantum metric.
The upbringing of the notion of quantum relation is tightly connected with
the concept of $W^\ast$-metric space introduced by Kuperberg and Weaver in
\cite{KuWea2012}. Let us recall briefly such definition.

\begin{definition}{\bf (\cite[Definition 2.1(a)/2.3]{KuWea2012})}
  \label{FW.def.Wme}
  A family of subspaces $\VV = (\V_r)_{r \geq 0}$ of $\B(\H)$ is a
  \emph{$W^\ast$-pseudometric} over $\M \subset \B(H)$ iff
  \begin{enumerate}[label={(\roman*)}, ref=(\roman*)]
    \item \label{FW.def.Wme.1} Each $\V_r$ is a quantum relation over $\M$.
    \item \label{FW.def.Wme.2} Each $\V_r$ is symmetric, i.e.
    $\V_r^\ast = \V_r$.
    \item \label{FW.def.Wme.3} $\V_r \cdot \V_s \subset \V_{r + s}$.
    \item \label{FW.def.Wme.4} $\displaystyle{\bigcap_{s > t} \V_s = \V_t}$.
  \end{enumerate}
  We say that $\VV$ is a \emph{$W^\ast$-metric} iff $\V_0 = \M$.
\end{definition}

Notice that, if $\M = \ell_\infty(X)$ is a discrete measure space, then
every $\V_r$ corresponds to a relation $R_r \subset X \times X$. Condition
\ref{FW.def.Wme.2} becomes usual symmetry for $R_r$. Defining a function
$d_\VV(x,y) = \inf \{ r : (x,y) \in R_r \}$ gives that \ref{FW.def.Wme.4}
is the triangular inequality and so $d_\VV$ is a classical (pseudo)metric.

% advantages for Harmonic analysis (1) W* algebraic (2) allow to restric operators
Classically, a metric measure space is a triple $(X,\mu,d)$ where $\mu$ is a
measure and $d$ is a metric such that the Borel $\sigma$-algebra generated by
$d$ is composed of measurable sets. The noncommutative version of a
measure space is generally regarded as a pair $(\M, \tau)$, where $\M$ is
a von Neumann algebra and $\tau: \M_+ \to [0,\infty]$ normal, semifinite and
faithful trace (or more generally a weight). Using $W^\ast$-metrics in this
context gives a good noncommutative generalization of metric measure spaces.
There are other, earlier, notions of quantized metric spaces, see for instance
\cite{Rief2004Gromov}, but $W^\ast$-metrics have some advantages. One of
them is that they provide a more natural framework for studying both finite
speed of propagation and \emph{off-diagonal} bounds associated with a
\emph{Markovian semigroup} over $(\M, \tau)$. Recall some definitions.

% application to gaussian bounds, finite speed of propagation and locality for ucp semigroups.
\begin{definition}
  \label{FW.def.Sem}
  A semigroup $(S_t)_{t \geq 0}$ of normal operators
  $S_t : \M \to \M$ is said to be Markovian iff
  \begin{enumerate}[label={(\roman*)}, ref=(\roman*)]
    \item \label{FW.def.Sem.1} Each $S_t$ is unital and completely positive.
    \item \label{FW.def.Sem.2} The semigroup is symmetric, i.e.
    $\tau( (S_t x)^\ast \, y) = \tau(x^\ast \, S_t y)$.
    \item \label{FW.def.Sem.3} The map $t \mapsto S_t$ is pointwise
    weak-$\ast$.
  \end{enumerate}
  Observe that as a consequence of $S_t$ being unital and
  \ref{FW.def.Sem.2} we get that $\tau \, S_t = \tau$.
\end{definition}

The most classical example of such type of semigroup is given by the heat
semigroup on $\RR^n$. In such case $S_t = e^{-t \, (-\Delta)}$ and its
kernel $k_t$ satisfies Gaussian bounds of the form
\begin{equation}
  \label{FW.eq.CGB}
  \big\| k_t (x,y) \, \chi_{\{ (x,y) : d(x,y) > r \} } \big\|_{L_\infty(\RR^n \times \RR^n)} \lesssim_{(n)} \frac{e^{- \frac{r^2}{4 \, t}}}{\sqrt{t^n}}.
\end{equation}
Such bounds have been used in the noncommutative case in \cite{GonJunPar2015}
with an ad hoc approach for $\M = L G$. Notice that, if $J_{\V_r}$ is generated
by a projection $P_r \in \Prj(\M \weaktensor \M_op)$ and the semigroup
$S_t$ can be expressed as an integral operator by
\[
  S_t(x) = \tau \big\{ k_t \, (\1 \otimes x) \big\},
\]
for some $k_t$ affiliated to $\M \weaktensor \M_\op$, then the
off-diagonal restriction is just $k_t \, P_r$ and we can generalize
\eqref{FW.eq.CGB} by bounding such element. Since, in general, ideals
in $\M \eHaag \M$ are not principal, such projection doesn't exist.
Nevertheless, we can take $\Phi_s(S_t)$ for $s \in \Ball(J_{\V_r})$, the
unit ball of $J_{\V_r}$, obtaining noncommutative Gaussian bounds of the
form
\[
  \sup_{s \in \Ball(J_{\V_r})} \big\| \Phi_s(S_t) \big\|_{\CB(L_1(\M), \M)} \lesssim \frac{e^{- \beta \frac{r^2}{t}}}{\sqrt{t^n}}.
\]

Another Harmonic analysis concept that seems natural to formulate in the
context of $W^\ast$-metrics is finite speed of propagation for the wave equation.
Recall that if $S_t = e^{-t \, (-\Delta)}$ is the heat equation in $\RR^n$ its
associated wave equation is given by
\[
   \partial_{t \, t}^2 f_t + (-\Delta) f_t = 0.
\]
The solution of such equation have \emph{finite speed of propagation}, meaning
that if $f_t$ is a solution of the above equation and $\supp[f_0] = K$, after
time $t > 0$ the support of $f_t$ is contained in
\[
  B_t(K) = \bigcup_{x \in K} B_t(x).
\] 
Such condition can be defined trivially using $W^\ast$-metrics as follows.

\begin{definition}
  We say that a Markovian semigroup over $S_t = e^{t A}$ have finite speed
  of propagation (with respect to some $W^\ast$-metric $\VV$) iff
  \[
    \cos( t \sqrt{A} ) \in \V_t, \quad \forall \, t > 0.
  \]
\end{definition}

Observe that the definition makes perfect sense since, without loss of
generality we can assume $\V \subset \B(L_2(\M,\tau))$ and clearly
$\cos(t \sqrt{A})$ is bounded in $L_2$. The intuition behind is that
$x_t = \cos(t \sqrt{A}) \, x$ satisfies the equation
$\partial_{t \, t}^2 x_t + A x_t = 0$ with $x_0 = x$. 

Gaussian bounds and finite speed of propagation are equivalent after
assuming certain hypothesis, see \cite{Si1996} \cite{Si2004}. Generalizing
such results and exploring the connections with locality in the noncommutative
setting is the goal of a forthcoming article.

\noindent \textbf{Acknowledgements.} Originally, the proof of the main
correspondence, i.e. Theorem \ref{S2.thm.Corr}, was a linearisation of the
proof given by Weaver in \cite[Theorem 2.32]{Wea2012} which used the fact that
pointwise norm closed ideals are weak-$\ast$ closed. The author is indebted
to prof. Marius Junge for pointing out an error in the proof and for
subsequent fruitful mathematical discussions. We are also thankful to him
for pointing out the existence of literature around Theorem \ref{S5.thm.NRS}.

\bibliographystyle{alpha}
\bibliography{../bibliography/bibliography}

\begin{thebibliography}{GPJP16}

\bibitem[BM04]{BleMer2004Operator}
D.P. Blecher and C.L. Merdy.
\newblock {\em Operator Algebras and Their Modules: An Operator Space
  Approach}.
\newblock London Mathematical Society monographs. Clarendon, 2004.

\bibitem[BS92]{BleSmi1992}
D.~P. Blecher and R.~R. Smith.
\newblock The dual of the {H}aagerup tensor product.
\newblock {\em J. London Math. Soc. (2)}, 45(1):126--144, 1992.

\bibitem[CdlS15]{CasSall2015}
M.~Caspers and M.~de~la Salle.
\newblock Schur and {F}ourier multipliers of an amenable group acting on
  non-commutative {$L_p$}-spaces.
\newblock {\em Trans. Amer. Math. Soc.}, 2015.

\bibitem[EK87]{EffKi1987}
E.~G. Effros and A.~Kishimoto.
\newblock Module maps and {H}ochschild-{J}ohnson cohomology.
\newblock {\em Indiana Univ. Math. J.}, 36(2):257--276, 1987.

\bibitem[EKR93]{EffKraRu1993}
E.~G. Effros, J.~Kraus, and Z.-J. Ruan.
\newblock On two quantized tensor products.
\newblock In {\em Operator algebras, mathematical physics, and low-dimensional
  topology ({I}stanbul, 1991)}, volume~5 of {\em Res. Notes Math.}, pages
  125--145. A K Peters, Wellesley, MA, 1993.

\bibitem[ER91]{EffRu1991SelfDual}
E.~G. Effros and Z.-J. Ruan.
\newblock Self-duality for the haagerup tensor product and hilbert space
  factorizations.
\newblock {\em Journal of Functional Analysis}, 100(2):257 -- 284, 1991.

\bibitem[ER00]{EffRu2000Book}
E.~G. Effros and Z.-J. Ruan.
\newblock {\em Operator {S}paces}, volume~23 of {\em London Mathematical
  Society Monographs. New Series}.
\newblock The Clarendon Press, Oxford University Press, New York, 2000.

\bibitem[ER03]{EffRu2003}
E.~G. Effros and Z.-J. Ruan.
\newblock Operator space tensor products and {H}opf convolution algebras.
\newblock {\em J. Operator Theory}, 50(1):131--156, 2003.

\bibitem[Erd86]{Er1986}
J.~A. Erdos.
\newblock Reflexivity for subspace maps and linear spaces of operators.
\newblock {\em Proc. London Math. Soc. (3)}, 52(3):582--600, 1986.

\bibitem[GPJP15]{GonJunPar2015}
A.~M. {G}onz\'alez {P}\'erez, M.~Junge, and J.~Parcet.
\newblock Smooth {F}ourier multipliers in group von {N}eumann algebras via
  {S}obolev dimension.
\newblock {\em \url{http://arxiv.org/abs/1505.05316}}, 2015.

\bibitem[GPJP16]{GonJunPar2016QE}
A.~M. {G}onz\'alez {P}\'erez, M.~Junge, and J.~Parcet.
\newblock Singular integral operators for quantum {E}uclidean spaces.
\newblock {\em preprint}, 2016.

\bibitem[Haa86]{HaagSD}
U.~Haagerup.
\newblock Decomposition of completely bounded maps on operator algebras.
\newblock {\em (unpublished manuscript)}, 1986.

\bibitem[JNR09]{JunNeuRu2009}
M.~Junge, M.~Neufang, and Z.-J. Ruan.
\newblock A representation theorem for locally compact quantum groups.
\newblock {\em Internat. J. Math.}, 20(3):377--400, 2009.

\bibitem[JS05]{JunSher2005}
M.~Junge and D.~Sherman.
\newblock Noncommutative {$L^p$} modules.
\newblock {\em J. Operator Theory}, 53(1):3--34, 2005.

\bibitem[KW12]{KuWea2012}
G.~Kuperberg and N.~Weaver.
\newblock A von {N}eumann algebra approach to quantum metrics.
\newblock {\em Mem. Amer. Math. Soc.}, 215(1010):v, 1--80, 2012.

\bibitem[Lar82]{Lars1982}
D.~R. Larson.
\newblock Annihilators of operator algebras.
\newblock In {\em Invariant subspaces and other topics ({T}imi\c
  soara/{H}erculane, 1981)}, volume~6 of {\em Operator Theory: Adv. Appl.},
  pages 119--130. Birkh\"auser, Basel-Boston, Mass., 1982.

\bibitem[NRS08]{NeuRuanSpronk2008}
M.~Neufang, Z-J. Ruan, and N.~Spronk.
\newblock Completely isometric representations of {$M_{cb}A(G)$} and
  {$\mathrm{UCB}(G)^\ast$}.
\newblock {\em Transactions of the American Mathematical Society},
  360(3):1133--1161, 2008.

\bibitem[Pau86]{Pa1986Book}
V.~I. Paulsen.
\newblock {\em Completely bounded maps and dilations}, volume 146 of {\em
  Pitman Research Notes in Mathematics Series}.
\newblock Longman Scientific \& Technical, Harlow; John Wiley \& Sons, Inc.,
  New York, 1986.

\bibitem[Ped79]{Ped1979}
G.~K. Pedersen.
\newblock {\em {$C^{\ast} $}-algebras and their automorphism groups}, volume~14
  of {\em London Mathematical Society Monographs}.
\newblock Academic Press, Inc. [Harcourt Brace Jovanovich, Publishers],
  London-New York, 1979.

\bibitem[Pis98]{Pi1998}
G.~Pisier.
\newblock Non-commutative vector valued {$L_p$}-spaces and completely
  {$p$}-summing maps.
\newblock {\em Ast\'erisque}, (247), 1998.

\bibitem[Pis03]{Pi2003}
G.~Pisier.
\newblock {\em Introduction to {O}perator {S}pace {T}heory}, volume 294 of {\em
  London Mathematical Society Lecture Note Series}.
\newblock Cambridge University Press, Cambridge, 2003.

\bibitem[PX03]{PiXu2003}
G.~Pisier and Q.~Xu.
\newblock Non-commutative {$L^p$}-spaces.
\newblock In {\em Handbook of the geometry of {B}anach spaces, {V}ol.\ 2},
  pages 1459--1517. North-Holland, Amsterdam, 2003.

\bibitem[Rie04a]{Rief2004Compact}
M.~A. Rieffel.
\newblock Compact quantum metric spaces.
\newblock In {\em Operator algebras, quantization, and noncommutative
  geometry}, volume 365 of {\em Contemp. Math.}, pages 315--330. Amer. Math.
  Soc., Providence, RI, 2004.

\bibitem[Rie04b]{Rief2004Gromov}
M.~A. Rieffel.
\newblock Gromov-hausdorff distance for quantum metric spaces.
\newblock {\em Mem. Amer. Math. Soc}, pages 1--65, 2004.

\bibitem[Sik96]{Si1996}
A.~Sikora.
\newblock Sharp pointwise estimates on heat kernels.
\newblock {\em Quart. J. Math. Oxford Ser. (2)}, 47(187):371--382, 1996.

\bibitem[Sik04]{Si2004}
A.~Sikora.
\newblock Riesz transform, {G}aussian bounds and the method of wave equation.
\newblock {\em Math. Z.}, 247(3):643--662, 2004.

\bibitem[Smi91]{Smith1991}
R.~R. Smith.
\newblock Completely bounded module maps and the {H}aagerup tensor product.
\newblock {\em Journal of Functional Analysis}, 102(1):156 -- 175, 1991.

\bibitem[Spr04]{Spronk2004}
N.~Spronk.
\newblock Measurable {S}chur multipliers and completely bounded multipliers of
  the {F}ourier algebras.
\newblock {\em Proc. London Math. Soc. (3)}, 89(1):161--192, 2004.

\bibitem[Tak73]{Tak1973Duality}
Masamichi Takesaki.
\newblock Duality in crossed products and von {N}eumann algebras of type {${\rm
  III}$}.
\newblock {\em Bull. Amer. Math. Soc.}, 79:1004--1005, 1973.

\bibitem[VD14]{Daele2014}
A.~Van~Daele.
\newblock Locally compact quantum groups. {A} von {N}eumann algebra approach.
\newblock {\em SIGMA Symmetry Integrability Geom. Methods Appl.}, 10:Paper 082,
  41, 2014.

\bibitem[Wea12]{Wea2012}
N.~Weaver.
\newblock Quantum relations.
\newblock {\em Mem. Amer. Math. Soc.}, 215(1010):v--vi, 81--140, 2012.

\bibitem[Wen52]{Wen1952}
J.~G. Wendel.
\newblock Left centralizers and isomorphisms of group algebras.
\newblock {\em Pacific J. Math.}, 2:251--261, 1952.

\end{thebibliography}

\end{document}